\newtheorem{theorem}{Theorem}[section]
\newtheorem{proposition}[theorem]{Proposition}
\newtheorem{lemma}[theorem]{Lemma}   
\newtheorem{corollary}[theorem]{Corollary}  
\newtheorem*{theorem*}{Theorem}
\newtheorem*{proposition*}{Proposition}
\theoremstyle{definition}
\newtheorem{definition}[theorem]{Definition}   
\newtheorem{example}[theorem]{Example}
\definecolor{myurlcolor}{rgb}{0.6,0,0}
\definecolor{mycitecolor}{rgb}{0,0,0.9}
\definecolor{myrefcolor}{rgb}{0,0,0.9}
\newcommand{\define}[1]{{\bf \boldmath{#1}}}
\newcommand{\N}{\mathbb{N}}
\newcommand{\Z}{\mathbb{Z}}
\newcommand{\R}{\mathbb{R}}
\renewcommand{\L}{\mathcal{L}}
\newcommand{\Rel}{\mathrm{Rel}}
\newcommand\Span{\mathrm{Span}}
\newcommand{\Corel}{\mathrm{Corel}}
\newcommand{\Cospan}{\mathrm{Cospan}}
\newcommand{\Fin}{\mathrm{Fin}}
\newcommand{\Set}{\mathrm{Set}}
\newcommand{\SigFlow}{\mathrm{SigFlow}}
\newcommand{\Lag}{\mathrm{Lag}}
\newcommand{\Aff}{\mathrm{Aff}}
\newcommand{\PROP}{\mathrm{PROP}}
\newcommand{\Symm}{\mathrm{Symm}}
\newcommand{\Mon}{\mathrm{Mon}}
\newcommand{\Cat}{\mathrm{Cat}}
\newcommand{\Circ}{{\mathrm{Circ}}}
\newcommand{\Ccirc}{\widetilde{\mathrm{Circ}}}
\newcommand{\RLCCirc}{\mathrm{Circ}_{RLC}}
\newcommand{\Mod}{\mathrm{Mod}}
\newcommand{\A}{A}
\newcommand{\C}{C}
\newcommand{\T}{T}
\newcommand{\NN}{N}
\newcommand{\maps}{\colon}
\renewcommand{\hom}{\mathrm{hom}}
\newcommand{\To}{\Rightarrow}
\newcommand{\asrelto}{\nrightarrow}
\newcommand{\midarrow}{\tikz \draw[thick, ->-=.5] (0,0) -- +(.1,0);}
\tikzset{font=\footnotesize}
\tikzset{->-/.style={decoration={
  markings,
  mark=at position #1 with {\arrow{>}}},postaction={decorate}}}
\tikzstyle{none}=[inner sep=0pt]
\tikzstyle{circ}=[circle,fill=black,draw,inner sep=3pt]
\newcommand{\mult}[1]
{
\begin{aligned}
    \resizebox{#1}{!}{
\begin{tikzpicture}
	\begin{pgfonlayer}{nodelayer}
		\node [style=none] (0) at (1, -0) {};
		\node [style=circ] (1) at (0.125, -0) {};
		\node [style=none] (2) at (-1, 0.5) {};
		\node [style=none] (3) at (-1, -0.5) {};
	\end{pgfonlayer}
	\begin{pgfonlayer}{edgelayer}
		\draw[line width=2pt] (0.center) to (1.center);
		\draw[line width=2pt] [in=0, out=120, looseness=1.20] (1.center) to (2.center);
		\draw[line width=2pt] [in=0, out=-120, looseness=1.20] (1.center) to (3.center);
	\end{pgfonlayer}
      \end{tikzpicture}}
\end{aligned}
}
\newcommand{\unit}[1]
{
  \begin{aligned}
    \resizebox{#1}{!}{
\begin{tikzpicture}
	\begin{pgfonlayer}{nodelayer}
		\node [style=none] (0) at (1, -0) {};
		\node [style=none] (1) at (-1, -0) {};
		\node [style=circ] (2) at (0, -0) {};
	\end{pgfonlayer}
	\begin{pgfonlayer}{edgelayer}
		\draw[line width=2pt] (0.center) to (2);
	\end{pgfonlayer}
      \end{tikzpicture}}
  \end{aligned}
}
\newcommand{\comult}[1]
{
\begin{aligned}
    \resizebox{#1}{!}{
\begin{tikzpicture}
	\begin{pgfonlayer}{nodelayer}
		\node [style=none] (0) at (-1, -0) {};
		\node [style=circ] (1) at (-0.125, -0) {};
		\node [style=none] (2) at (1, 0.5) {};
		\node [style=none] (3) at (1, -0.5) {};
	\end{pgfonlayer}
	\begin{pgfonlayer}{edgelayer}
		\draw[line width=2pt] (0.center) to (1.center);
		\draw[line width=2pt] [in=180, out=60, looseness=1.20] (1.center) to (2.center);
		\draw[line width=2pt] [in=180, out=-60, looseness=1.20] (1.center) to (3.center);
	\end{pgfonlayer}
      \end{tikzpicture}}
\end{aligned}
}
\newcommand{\counit}[1]
{
  \begin{aligned}
    \resizebox{#1}{!}{
\begin{tikzpicture}
	\begin{pgfonlayer}{nodelayer}
		\node [style=none] (0) at (-1, -0) {};
		\node [style=none] (1) at (1, -0) {};
		\node [style=circ] (2) at (0, -0) {};
	\end{pgfonlayer}
	\begin{pgfonlayer}{edgelayer}
		\draw[line width=2pt] (0.center) to (2);
	\end{pgfonlayer}
      \end{tikzpicture}}
  \end{aligned}
}
\newcommand{\idone}[1]
{
  \begin{aligned}
    \resizebox{#1}{!}{
     \begin{tikzpicture}
	\begin{pgfonlayer}{nodelayer}
		\node [style=none] (0) at (-1, -0) {};
		\node [style=none] (1) at (1, -0) {};
		\node [style=none] (2) at (0, 0.5) {};
		\node [style=none] (3) at (0, -0.5) {};
	\end{pgfonlayer}
	\begin{pgfonlayer}{edgelayer}
		\draw[line width=2pt] (1.center) to (0.center);
	\end{pgfonlayer}
\end{tikzpicture} 
    }
  \end{aligned}
}
\newcommand{\singlegen}[1]
{
  \begin{aligned}
    \resizebox{#1}{!}{
     \begin{tikzpicture}
	\begin{pgfonlayer}{nodelayer}
		\node [style=none] (0) at (-1, -0) {};
		\node [style=none] (1) at (1, -0) {};
		\node [style=none] (2) at (0, 0.5) {};
		\node [style=none] (3) at (0, -0.5) {};
	\end{pgfonlayer}
	\begin{pgfonlayer}{edgelayer}
		\draw[line width=2pt] (1.center) to (0.center) node[midway,above] {\scalebox{2.5}{$\ell$}};
	\end{pgfonlayer}
\end{tikzpicture} 
    }
  \end{aligned}
}
\newcommand{\swap}[1]
{
  \begin{aligned}
    \resizebox{#1}{!}{
\begin{tikzpicture}
	\begin{pgfonlayer}{nodelayer}
		\node [style=none] (2) at (-0.5, -0.5) {};
		\node [style=none] (3) at (-2, 0.5) {};
		\node [style=none] (4) at (-0.5, 0.5) {};
		\node [style=none] (5) at (-2, -0.5) {};
	\end{pgfonlayer}
	\begin{pgfonlayer}{edgelayer}
		\draw[line width=2pt] [in=180, out=0, looseness=1.00] (3.center) to (2.center);
		\draw[line width=2pt] [in=0, out=180, looseness=1.00] (4.center) to (5.center);
	\end{pgfonlayer}
\end{tikzpicture}
    }
  \end{aligned}
}
\newcommand{\assocl}[1]
{
  \begin{aligned}
    \resizebox{#1}{!}{
\begin{tikzpicture}
	\begin{pgfonlayer}{nodelayer}
		\node [style=circ] (0) at (0.125, -0) {};
		\node [style=none] (1) at (-1, 0.5) {};
		\node [style=none] (2) at (-1, -0.5) {};
		\node [style=none] (3) at (0, -1) {};
		\node [style=none] (4) at (2.25, -0.5) {};
		\node [style=none] (5) at (0.25, -0) {};
		\node [style=circ] (6) at (1.25, -0.5) {};
		\node [style=none] (7) at (-1, -1) {};
	\end{pgfonlayer}
	\begin{pgfonlayer}{edgelayer}
		\draw[line width=2pt] [in=0, out=120, looseness=1.20] (0.center) to (1.center);
		\draw[line width=2pt] [in=0, out=-120, looseness=1.20] (0.center) to (2.center);
		\draw[line width=2pt] (4.center) to (6);
		\draw[line width=2pt] [in=0, out=120, looseness=1.20] (6) to (5.center);
		\draw[line width=2pt] [in=0, out=-120, looseness=1.20] (6) to (3.center);
		\draw[line width=2pt] (3.center) to (7.center);
	\end{pgfonlayer}
      \end{tikzpicture}}
  \end{aligned}
}
\newcommand{\assocr}[1]
{
  \begin{aligned}
    \resizebox{#1}{!}{
\begin{tikzpicture}
	\begin{pgfonlayer}{nodelayer}
		\node [style=circ] (0) at (0.125, -0.5) {};
		\node [style=none] (1) at (-1, -1) {};
		\node [style=none] (2) at (-1, 0) {};
		\node [style=none] (3) at (0, 0.5) {};
		\node [style=none] (4) at (2.25, 0) {};
		\node [style=none] (5) at (0.25, -0.5) {};
		\node [style=circ] (6) at (1.25, 0) {};
		\node [style=none] (7) at (-1, 0.5) {};
	\end{pgfonlayer}
	\begin{pgfonlayer}{edgelayer}
		\draw[line width=2pt] [in=0, out=-120, looseness=1.20] (0.center) to (1.center);
		\draw[line width=2pt] [in=0, out=120, looseness=1.20] (0.center) to (2.center);
		\draw[line width=2pt] (4.center) to (6);
		\draw[line width=2pt] [in=0, out=-120, looseness=1.20] (6) to (5.center);
		\draw[line width=2pt] [in=0, out=120, looseness=1.20] (6) to (3.center);
		\draw[line width=2pt] (3.center) to (7.center);
	\end{pgfonlayer}
      \end{tikzpicture}}
  \end{aligned}
}
\newcommand{\coassocl}[1]
{
  \begin{aligned}
    \resizebox{#1}{!}{
\begin{tikzpicture}
	\begin{pgfonlayer}{nodelayer}
		\node [style=circ] (0) at (1.125, -0.5) {};
		\node [style=none] (1) at (2.25, -1) {};
		\node [style=none] (2) at (2.25, 0) {};
		\node [style=none] (3) at (1.25, 0.5) {};
		\node [style=none] (4) at (-1, 0) {};
		\node [style=none] (5) at (1, -0.5) {};
		\node [style=circ] (6) at (0, 0) {};
		\node [style=none] (7) at (2.25, 0.5) {};
	\end{pgfonlayer}
	\begin{pgfonlayer}{edgelayer}
		\draw[line width=2pt] [in=180, out=-60, looseness=1.20] (0.center) to (1.center);
		\draw[line width=2pt] [in=180, out=60, looseness=1.20] (0.center) to (2.center);
		\draw[line width=2pt] (4.center) to (6);
		\draw[line width=2pt] [in=180, out=-60, looseness=1.20] (6) to (5.center);
		\draw[line width=2pt] [in=180, out=60, looseness=1.20] (6) to (3.center);
		\draw[line width=2pt] (3.center) to (7.center);
	\end{pgfonlayer}
      \end{tikzpicture}}
  \end{aligned}
}
\newcommand{\coassocr}[1]
{
  \begin{aligned}
    \resizebox{#1}{!}{
\begin{tikzpicture}
	\begin{pgfonlayer}{nodelayer}
		\node [style=circ] (0) at (1.125, 0) {};
		\node [style=none] (1) at (2.25, 0.5) {};
		\node [style=none] (2) at (2.25, -0.5) {};
		\node [style=none] (3) at (1.25, -1) {};
		\node [style=none] (4) at (-1, -0.5) {};
		\node [style=none] (5) at (1, 0) {};
		\node [style=circ] (6) at (0, -0.5) {};
		\node [style=none] (7) at (2.25, -1) {};
	\end{pgfonlayer}
	\begin{pgfonlayer}{edgelayer}
		\draw[line width=2pt] [in=180, out=60, looseness=1.20] (0.center) to (1.center);
		\draw[line width=2pt] [in=180, out=-60, looseness=1.20] (0.center) to (2.center);
		\draw[line width=2pt] (4.center) to (6);
		\draw[line width=2pt] [in=180, out=60, looseness=1.20] (6) to (5.center);
		\draw[line width=2pt] [in=180, out=-60, looseness=1.20] (6) to (3.center);
		\draw[line width=2pt] (3.center) to (7.center);
	\end{pgfonlayer}
      \end{tikzpicture}}
  \end{aligned}
}
\newcommand{\unitl}[1]
{
  \begin{aligned}
    \resizebox{#1}{!}{
\begin{tikzpicture}
	\begin{pgfonlayer}{nodelayer}
		\node [style=none] (0) at (1, -0) {};
		\node [style=circ] (1) at (0.125, -0) {};
		\node [style=circ] (2) at (-1, 0.5) {};
		\node [style=none] (3) at (-1, -0.5) {};
		\node [style=none] (4) at (-2, -0.5) {};
	\end{pgfonlayer}
	\begin{pgfonlayer}{edgelayer}
		\draw[line width=2pt] (0.center) to (1.center);
		\draw[line width=2pt] [in=0, out=120, looseness=1.20] (1.center) to (2.center);
		\draw[line width=2pt] [in=0, out=-120, looseness=1.20] (1.center) to (3.center);
		\draw[line width=2pt] (4.center) to (3.center);
	\end{pgfonlayer}
\end{tikzpicture}
    }
  \end{aligned}
}
\newcommand{\counitl}[1]
{
  \begin{aligned}
    \resizebox{#1}{!}{
\begin{tikzpicture}
	\begin{pgfonlayer}{nodelayer}
		\node [style=none] (0) at (-2, -0) {};
		\node [style=circ] (1) at (-1.125, -0) {};
		\node [style=circ] (2) at (0, 0.5) {};
		\node [style=none] (3) at (0, -0.5) {};
		\node [style=none] (4) at (1, -0.5) {};
	\end{pgfonlayer}
	\begin{pgfonlayer}{edgelayer}
		\draw[line width=2pt] (0.center) to (1.center);
		\draw[line width=2pt] [in=180, out=60, looseness=1.20] (1.center) to (2.center);
		\draw[line width=2pt] [in=180, out=-60, looseness=1.20] (1.center) to (3.center);
		\draw[line width=2pt] (4.center) to (3.center);
	\end{pgfonlayer}
\end{tikzpicture}
    }
  \end{aligned}
}
\newcommand{\commute}[1]
{
  \begin{aligned}
    \resizebox{#1}{!}{
\begin{tikzpicture}
	\begin{pgfonlayer}{nodelayer}
		\node [style=none] (0) at (1.25, -0) {};
		\node [style=circ] (1) at (0.375, -0) {};
		\node [style=none] (2) at (-0.5, -0.5) {};
		\node [style=none] (3) at (-2, 0.5) {};
		\node [style=none] (4) at (-0.5, 0.5) {};
		\node [style=none] (5) at (-2, -0.5) {};
	\end{pgfonlayer}
	\begin{pgfonlayer}{edgelayer}
		\draw[line width=2pt] (0.center) to (1.center);
		\draw[line width=2pt] [in=0, out=-120, looseness=1.20] (1.center) to (2.center);
		\draw[line width=2pt] [in=180, out=0, looseness=1.00] (3.center) to (2.center);
		\draw[line width=2pt] [in=0, out=120, looseness=1.20] (1.center) to (4.center);
		\draw[line width=2pt] [in=0, out=180, looseness=1.00] (4.center) to (5.center);
	\end{pgfonlayer}
\end{tikzpicture}
    }
  \end{aligned}
}
\newcommand{\cocommute}[1]
{
  \begin{aligned}
    \resizebox{#1}{!}{
\begin{tikzpicture}
	\begin{pgfonlayer}{nodelayer}
		\node [style=none] (0) at (-2, -0) {};
		\node [style=circ] (1) at (-1.125, -0) {};
		\node [style=none] (2) at (-0.25, -0.5) {};
		\node [style=none] (3) at (1.25, 0.5) {};
		\node [style=none] (4) at (-0.25, 0.5) {};
		\node [style=none] (5) at (1.25, -0.5) {};
	\end{pgfonlayer}
	\begin{pgfonlayer}{edgelayer}
		\draw[line width=2pt] (0.center) to (1.center);
		\draw[line width=2pt] [in=180, out=-60, looseness=1.20] (1.center) to (2.center);
		\draw[line width=2pt] [in=0, out=180, looseness=1.00] (3.center) to (2.center);
		\draw[line width=2pt] [in=180, out=60, looseness=1.20] (1.center) to (4.center);
		\draw[line width=2pt] [in=180, out=0, looseness=1.00] (4.center) to (5.center);
	\end{pgfonlayer}
\end{tikzpicture}
    }
  \end{aligned}
}
\newcommand{\frobs}[1]
{
  \begin{aligned}
    \resizebox{#1}{!}{
\begin{tikzpicture}
	\begin{pgfonlayer}{nodelayer}
		\node [style=none] (0) at (-1.5, 0.5) {};
		\node [style=circ] (1) at (-0.75, 0.5) {};
		\node [style=none] (2) at (0.25, -0) {};
		\node [style=none] (3) at (0.25, 1) {};
		\node [style=circ] (4) at (1, -0.5) {};
		\node [style=none] (5) at (0, -0) {};
		\node [style=none] (6) at (1.75, -0.5) {};
		\node [style=none] (7) at (0, -1) {};
		\node [style=none] (8) at (1.75, 1) {};
		\node [style=none] (9) at (-1.5, -1) {};
	\end{pgfonlayer}
	\begin{pgfonlayer}{edgelayer}
		\draw[line width=2pt] [in=180, out=-60, looseness=1.20] (1) to (2.center);
		\draw[line width=2pt] [in=180, out=60, looseness=1.20] (1) to (3.center);
		\draw[line width=2pt] (0.center) to (1);
		\draw[line width=2pt] (6.center) to (4);
		\draw[line width=2pt] [in=0, out=120, looseness=1.20] (4) to (5.center);
		\draw[line width=2pt] [in=0, out=-120, looseness=1.20] (4) to (7.center);
		\draw[line width=2pt] (3.center) to (8.center);
		\draw[line width=2pt] (7.center) to (9.center);
	\end{pgfonlayer}
\end{tikzpicture}
    }
  \end{aligned}
}
\newcommand{\frobx}[1]
{
  \begin{aligned}
    \resizebox{#1}{!}{
\begin{tikzpicture}
	\begin{pgfonlayer}{nodelayer}
		\node [style=circ] (0) at (-0.5, -0) {};
		\node [style=none] (1) at (-1.5, -0.5) {};
		\node [style=none] (2) at (-1.5, 0.5) {};
		\node [style=circ] (3) at (0.5, -0) {};
		\node [style=none] (4) at (1.5, -0.5) {};
		\node [style=none] (5) at (1.5, 0.5) {};
	\end{pgfonlayer}
	\begin{pgfonlayer}{edgelayer}
		\draw[line width=2pt] [in=0, out=-120, looseness=1.20] (0.center) to (1.center);
		\draw[line width=2pt] [in=0, out=120, looseness=1.20] (0.center) to (2.center);
		\draw[line width=2pt] [in=180, out=-60, looseness=1.20] (3) to (4.center);
		\draw[line width=2pt] [in=180, out=60, looseness=1.20] (3) to (5.center);
		\draw[line width=2pt] (0) to (3);
	\end{pgfonlayer}
\end{tikzpicture}
    }
  \end{aligned}
}
\newcommand{\frobz}[1]
{
  \begin{aligned}
    \resizebox{#1}{!}{
\begin{tikzpicture}
	\begin{pgfonlayer}{nodelayer}
		\node [style=none] (0) at (1.75, 0.5) {};
		\node [style=circ] (1) at (1, 0.5) {};
		\node [style=none] (2) at (0, -0) {};
		\node [style=none] (3) at (0, 1) {};
		\node [style=circ] (4) at (-0.75, -0.5) {};
		\node [style=none] (5) at (0.25, -0) {};
		\node [style=none] (6) at (-1.5, -0.5) {};
		\node [style=none] (7) at (0.25, -1) {};
		\node [style=none] (8) at (-1.5, 1) {};
		\node [style=none] (9) at (1.75, -1) {};
	\end{pgfonlayer}
	\begin{pgfonlayer}{edgelayer}
		\draw[line width=2pt] [in=0, out=-120, looseness=1.20] (1) to (2.center);
		\draw[line width=2pt] [in=0, out=120, looseness=1.20] (1) to (3.center);
		\draw[line width=2pt] (0.center) to (1);
		\draw[line width=2pt] (6.center) to (4);
		\draw[line width=2pt] [in=180, out=60, looseness=1.20] (4) to (5.center);
		\draw[line width=2pt] [in=180, out=-60, looseness=1.20] (4) to (7.center);
		\draw[line width=2pt] (3.center) to (8.center);
		\draw[line width=2pt] (7.center) to (9.center);
	\end{pgfonlayer}
\end{tikzpicture}
    }
  \end{aligned}
}
\newcommand{\spec}[1]
{
  \begin{aligned}
    \resizebox{#1}{!}{
\begin{tikzpicture}
	\begin{pgfonlayer}{nodelayer}
		\node [style=none] (0) at (1.75, -0) {};
		\node [style=circ] (1) at (0.75, -0) {};
		\node [style=none] (2) at (0, -0.5) {};
		\node [style=none] (3) at (0, 0.5) {};
		\node [style=circ] (4) at (-0.75, -0) {};
		\node [style=none] (5) at (0, -0.5) {};
		\node [style=none] (6) at (-1.75, -0) {};
		\node [style=none] (7) at (0, 0.5) {};
	\end{pgfonlayer}
	\begin{pgfonlayer}{edgelayer}
		\draw[line width=2pt] (0.center) to (1.center);
		\draw[line width=2pt] [in=0, out=-120, looseness=1.20] (1.center) to (2.center);
		\draw[line width=2pt] [in=0, out=120, looseness=1.20] (1.center) to (3.center);
		\draw[line width=2pt] (6.center) to (4);
		\draw[line width=2pt] [in=180, out=-60, looseness=1.20] (4) to (5.center);
		\draw[line width=2pt] [in=180, out=60, looseness=1.20] (4) to (7.center);
	\end{pgfonlayer}
\end{tikzpicture}
    }
  \end{aligned}
}
\newcommand{\extral}[1]
{
  \begin{aligned}
    \resizebox{#1}{!}{
\begin{tikzpicture}
	\begin{pgfonlayer}{nodelayer}
		\node [style=none] (0) at (1.75, -0) {};
		\node [style=circ] (1) at (0.75, -0) {};
		\node [style=circ] (4) at (-0.75, -0) {};
		\node [style=none] (6) at (-1.75, -0) {};
	\end{pgfonlayer}
	\begin{pgfonlayer}{edgelayer}
	  \draw[line width=2pt] (1.center) to (4.center);
	\end{pgfonlayer}
\end{tikzpicture}
    }
  \end{aligned}
}
\def\sigflowadd{
   \node[plus] (adder) {};
   \node[coordinate] (f) at (-0.5,0.65) {};
   \node[coordinate] (g) at (0.5,0.65) {};
   \node (out) [right of=adder] {};
   \node (pref) [left of=f] {};
   \node (preg) [left of=g] {};
   \draw[rounded corners,thick] (pref) -- (f) -- (adder.addend);
   \draw[rounded corners,thick] (preg) -- (g) -- (adder.summand);
   \draw[thick] (adder) -- (out);
}
\def\sigflowaddsideways{
\begin{scope}[rotate=90]
\sigflowadd
\end{scope}
}
\newcommand{\sigflowpicadd}[1]
{
  \begin{aligned}
    \resizebox{#1}{!}{
\begin{tikzpicture}[thick]
	\begin{pgfonlayer}{nodelayer}
\sigflowaddsideways
	\end{pgfonlayer}
\end{tikzpicture}
    }
  \end{aligned}
}
\def\sigflowcoadd{
   \node[coplus] (adder) {};
   \node[coordinate] (g) at (-0.5,0.65) {};
   \node[coordinate] (f) at (0.5,0.65) {};
   \node (out) [left of=adder] {};
   \node (pref) [right of=f] {};
   \node (preg) [right of=g] {};
   \draw[rounded corners,thick] (pref) -- (f) -- (adder.addend);
   \draw[rounded corners,thick] (preg) -- (g) -- (adder.summand);
   \draw[thick] (adder) -- (out);
}
\def\sigflowcoaddsideways{
\begin{scope}[rotate=90]
\sigflowcoadd
\end{scope}
}
\newcommand{\sigflowpiccoadd}[1]
{
  \begin{aligned}
    \resizebox{#1}{!}{
\begin{tikzpicture}[thick, xscale = -1]
	\begin{pgfonlayer}{nodelayer}
\sigflowcoaddsideways
	\end{pgfonlayer}
\end{tikzpicture}
    }
  \end{aligned}
}
\def\sigflowdup{
   \node[delta] (dupe){};
   \node[coordinate] (o2) at (-0.5,-0.65) {};
   \node[coordinate] (o1) at (0.5,-0.65) {};
   \node (in) [right of=dupe] {};
   \node (posto1) [left of=o1] {};
   \node (posto2) [left of=o2] {};
   \draw[rounded corners,thick] (posto1) -- (o1) -- (dupe.left out);
   \draw[rounded corners,thick] (posto2) -- (o2) -- (dupe.right out);
   \draw[thick] (in) -- (dupe);
}
\def\sigflowdupsideways{
\begin{scope}[rotate=90]
\sigflowdup
\end{scope}
}
\newcommand{\sigflowpiccodup}[1]
{
  \begin{aligned}
    \resizebox{#1}{!}{
\begin{tikzpicture}[thick, xscale = -1]
\begin{pgfonlayer}{nodelayer}
\sigflowdupsideways
	\end{pgfonlayer}
\end{tikzpicture}
    }
  \end{aligned}
}
\def\sigflowcodup{
   \node[codelta] (dupe){};
   \node[coordinate] (o1) at (-0.5,-0.65) {};
   \node[coordinate] (o2) at (0.5,-0.65) {};
   \node (in) [left of=dupe] {};
   \node (posto1) [right of=o1] {};
   \node (posto2) [right of=o2] {};
   \draw[rounded corners,thick] (posto1) -- (o1) -- (dupe.left out);
   \draw[rounded corners,thick] (posto2) -- (o2) -- (dupe.right out);
   \draw[thick] (in) -- (dupe);
}
\def\sigflowcodupsideways{
\begin{scope}[rotate=90]
\sigflowcodup
\end{scope}
}
\newcommand{\sigflowpicdup}[1]
{
  \begin{aligned}
    \resizebox{#1}{!}{
\begin{tikzpicture}[thick]
\begin{pgfonlayer}{nodelayer}
\sigflowcodupsideways
	\end{pgfonlayer}
\end{tikzpicture}
    }
  \end{aligned}
}
\def\SigMult{
\begin{scope}[shift={(0,0)}, xscale=1]
\sigflowaddsideways
\end{scope}
\begin{scope}[shift={(0,1.5)}, xscale=-1]
\sigflowdupsideways
\end{scope}
\begin{scope}[shift={(-1.95,.75)}, scale=.63, rotate=90]
\sigflowswap
\end{scope}
\begin{scope}[rotate=90]
   \node[coordinate] (a) at (-.501175,1.15) {};
   \node[coordinate] (b) at (-.501175,2.35) {};
   \draw[rounded corners,thick] (b) -- (a);
   \node[coordinate] (c) at (2.0001,1.15) {};
   \node[coordinate] (d) at (2.0001,2.35) {};
   \draw[rounded corners,thick] (d) -- (c);
\end{scope}
}
\newcommand{\SigMultpic}[1]
{
  \begin{aligned}
    \resizebox{#1}{!}{
\begin{tikzpicture}[thick]
\begin{pgfonlayer}{nodelayer}
  \SigMult
	\end{pgfonlayer}
\end{tikzpicture}
    }
  \end{aligned}
}
\def\SigCoMult{
\begin{scope}[shift={(0,-1.5)}, xscale=-1]
\sigflowcoaddsideways
\end{scope}
\begin{scope}[shift={(0,0)}, xscale=1]
\sigflowcodupsideways
\end{scope}
\begin{scope}[shift={(1.95,-.75)}, scale=.63, rotate = -90]
\sigflowswap
\end{scope}
\begin{scope}[rotate=-90]
   \node[coordinate] (a) at (-.5,1.15) {};
   \node[coordinate] (b) at (-.5,2.35) {};
   \draw[rounded corners,thick] (b) -- (a);
   \node[coordinate] (c) at (2,1.15) {};
   \node[coordinate] (d) at (2,2.35) {};
   \draw[rounded corners,thick] (d) -- (c);
\end{scope}
}
\newcommand{\SigCoMultpic}[1]
{
  \begin{aligned}
    \resizebox{#1}{!}{
\begin{tikzpicture}[thick]
\begin{pgfonlayer}{nodelayer}
  \SigCoMult
	\end{pgfonlayer}
\end{tikzpicture}
    }
  \end{aligned}
}
\def\sigflowzero{
   \node (out1) {};
   \node [zero] (ins1) [left of=out1, shift={(-.2,0)}] {};
   \draw (out1) -- (ins1);
}
\newcommand{\sigflowpiczero}[1]
{
  \begin{aligned}
    \resizebox{#1}{!}{
\begin{tikzpicture}[thick, node distance=0.85cm]
	\begin{pgfonlayer}{nodelayer}
\sigflowzero
	\end{pgfonlayer}
\end{tikzpicture}
    }
  \end{aligned}
}
\def\sigflowcozero{
   \node (out1) {};
   \node [zero] (ins1) [right of=out1, shift={(.2,0)}] {};
   \draw (out1) -- (ins1);
}
\newcommand{\sigflowpiccozero}[1]
{
  \begin{aligned}
    \resizebox{#1}{!}{
\begin{tikzpicture}[thick, node distance=0.85cm]
	\begin{pgfonlayer}{nodelayer}
\sigflowcozero
	\end{pgfonlayer}
\end{tikzpicture}
    }
  \end{aligned}
}
\def\sigflowdel{
   \node (in1) {};
   \node [bang] (del1) [left of=in1, shift={(-.2,0)}] {};
   \draw (in1) -- (del1);
}
\newcommand{\sigflowpiccodel}[1]
{
  \begin{aligned}
    \resizebox{#1}{!}{
\begin{tikzpicture}[thick, node distance=0.85cm]
	\begin{pgfonlayer}{nodelayer}
\sigflowdel
	\end{pgfonlayer}
\end{tikzpicture}
    }
  \end{aligned}
}
\def\sigflowcodel{
   \node (in1) {};
   \node [bang] (del1) [right of=in1, shift={(.2,0)}] {};
   \draw (in1) -- (del1);
}
\newcommand{\sigflowpicdel}[1]
{
  \begin{aligned}
    \resizebox{#1}{!}{
\begin{tikzpicture}[thick, node distance=0.85cm]
	\begin{pgfonlayer}{nodelayer}
\sigflowcodel
	\end{pgfonlayer}
\end{tikzpicture}
    }
  \end{aligned}
}
\def\SigUnit{
\begin{scope}[shift={(0,0)}, xscale=.3]
\sigflowzero
\end{scope}
\begin{scope}[shift={(0,.5)}, xscale=-.3]
\sigflowdel
\end{scope}
}
\newcommand{\SigUnitpic}[1]
{
  \begin{aligned}
    \resizebox{#1}{!}{
\begin{tikzpicture}[thick, node distance=0.85cm]
	\begin{pgfonlayer}{nodelayer}
\SigUnit
	\end{pgfonlayer}
\end{tikzpicture}
    }
  \end{aligned}
}
\def\SigCoUnit{
\begin{scope}[shift={(0,0)}, xscale=.5]
\sigflowcozero
\end{scope}
\begin{scope}[shift={(0,.5)}, xscale=-.5]
\sigflowcodel
\end{scope}
}
\newcommand{\SigCoUnitpic}[1]
{
  \begin{aligned}
    \resizebox{#1}{!}{
\begin{tikzpicture}[thick, node distance=0.85cm]
	\begin{pgfonlayer}{nodelayer}
\SigCoUnit
	\end{pgfonlayer}
\end{tikzpicture}
    }
  \end{aligned}
}
\def\SigLabel{
   \node[coordinate] (in) {};
   \node [multiply] (mult) [right of=in] {$\mathrm{c}$};
   \node[coordinate] (out) [right of=mult] {};
   \node (label) [below of=out, shift={(.1,0)}] {};
   \draw  [thick] (in) -- (mult) -- (out);
}
\newcommand{\SigLabelpic}[1]
{
  \begin{aligned}
    \resizebox{#1}{!}{
\begin{tikzpicture}[thick, node distance=0.85cm]
	\begin{pgfonlayer}{nodelayer}
\SigLabel
	\end{pgfonlayer}
\end{tikzpicture}
    }
  \end{aligned}
}
\def\sigflowswap{
   \node (UpUpLeft) at (-0.4,0.9) {};
   \node [coordinate] (UpLeft) at (-0.4,0.4) {};
   \node (mid) at (0,0) {};
   \node [coordinate] (DownRight) at (0.4,-0.4) {};
   \node (DownDownRight) at (0.4,-0.9) {};
   \node [coordinate] (UpRight) at (0.4,0.4) {};
   \node (UpUpRight) at (0.4,0.9) {};
   \node [coordinate] (DownLeft) at (-0.4,-0.4) {};
   \node (DownDownLeft) at (-0.4,-0.9) {};
   \draw [rounded corners=2mm,thick] (UpUpLeft) -- (UpLeft) -- (mid) --
   (DownRight) -- (DownDownRight) (UpUpRight) -- (UpRight) -- (DownLeft) -- (DownDownLeft);
}
\def\SigLabelEdge{
\begin{scope}[shift={(0,0)}, xscale=-1]
\sigflowcoaddsideways
\end{scope}
\begin{scope}[shift={(4,2)}, xscale=-1]
\sigflowdupsideways
\end{scope}
   \node[bang2] (mult) at (2,1) {$Z$};
   \draw[rounded corners = 2mm,thick] (posto2.east) -- (posto2.west) -- (mult);
   \draw[rounded corners = 2mm,thick] (pref.west) -- (pref.east) -- (mult);
   \draw[rounded corners = 2mm,line width= 1.pt] (preg.west) -- +(3.75,0);
   \draw[rounded corners = 2mm,line width= 1.2pt] (posto1.east) -- +(-3.75,0);
}
\newcommand{\SigLabelEdgepic}[1]
{
  \begin{aligned}
    \resizebox{#1}{!}{
\begin{tikzpicture}[thick]
	\begin{pgfonlayer}{nodelayer}
\SigLabelEdge
	\end{pgfonlayer}
\end{tikzpicture}
    }
  \end{aligned}
}
\newcommand*\pgfdeclareanchoralias[3]{%
  \expandafter\def\csname pgf@anchor@#1@#3\expandafter\endcsname
     \expandafter{\csname pgf@anchor@#1@#2\endcsname}}
\tikzset{
circnode/.style={
  circle, draw=red, very thin, outer sep=0.025em, minimum size=2em,
  fill=red, text centered},
integral/.style={
  regular polygon, regular polygon sides=3, shape border rotate=180, draw=black, very thick,
  outer sep=0.025em, inner sep=0, minimum size=2em, fill=blue!5, text centered},
multiply/.style={
  regular polygon, regular polygon sides=3, shape border rotate=-90, draw=black,  thick,
  outer sep=0.025em, inner sep=0, minimum size=2em, fill=blue!5, text centered},
vsource/.style={
  regular polygon, regular polygon sides=5, shape border rotate=-90, draw=black, thick,
  outer sep=0.025em, inner sep=0, minimum size=2em, fill=blue!5, text centered},
isource/.style={
  regular polygon, regular polygon sides=5, shape border rotate=0, draw=black, very thick,
  outer sep=0.025em, inner sep=0, minimum size=2em, fill=blue!5, text centered},
upmultiply/.style={
  regular polygon, regular polygon sides=3, draw=black, very thick,
  outer sep=0.025em, inner sep=0, minimum size=2em, fill=blue!5, text centered},
zero/.style={
  circle, draw=black, very thick, minimum size=0.15cm, fill=black,
  inner sep=0, outer sep=0},
hole/.style={
  circle, draw=white, very thick, minimum size=0.25cm, fill=white,
  inner sep=0, outer sep=0},
bang/.style={
  circle, draw=black, very thick, minimum size=0.15cm, fill=green!10,
  inner sep=0, outer sep=0},
bang2/.style={
  circle, draw=black, thick, minimum size=0.5cm, fill=green!10,
  inner sep=0, outer sep=0},
delta/.style={
  regular polygon, regular polygon sides=3, minimum size=0.4cm,  shape border rotate=-90, inner
  sep=0, outer sep=0.025em, draw=black, very thick, fill=green!10},
codelta/.style={
  regular polygon, regular polygon sides=3, shape border rotate=90, minimum size=0.4cm,
  inner sep=0, outer sep=0.025em, draw=black, very thick, fill=green!10},
plus/.style={
  regular polygon, regular polygon sides=3, shape border rotate=-90, minimum size=0.4cm,
  inner sep = 0, outer sep=0.025em, draw=black, very thick, fill=black},
coplus/.style={
  regular polygon, regular polygon sides=3, minimum size=0.4cm,shape border rotate=90,
  inner sep = 0, outer sep=0.025em, draw=black, very thick, fill=black},
sqnode/.style={
  regular polygon, regular polygon sides=4, minimum size=2.6em,
  draw=black, very thick, inner sep=0.2em, outer sep=0.025em,
  fill=yellow!10, text centered},
blackbox/.style={
  regular polygon, regular polygon sides=4, minimum size=2.6em,
  draw=black, very thick, inner sep=0.2em, outer sep=0.025em, fill=black},
bigcirc/.style={
  circle, draw=black, very thick, text width=1.6em, outer sep=0.025em,
  minimum height=1.6em, fill=blue!5, text centered}
 }
\begin{document} 

\title{Props in Network Theory}
\author{
\begin{tabular}{ccc}
John C.\ Baez\footnote{Department of Mathematics, University of California, Riverside CA, 92521, USA}{ }\footnote{Centre for Quantum Technologies, National University of Singapore, 117543, Singapore} & Brandon Coya$^*$ & Franciscus Rebro$^*$
\\
\small baez@math.ucr.edu & \small bcoya001@ucr.edu & \small franciscus.rebro@gmail.com
\end{tabular}
}

\date{January 23, 2018}

\maketitle

\begin{abstract}

\noindent
Long before the invention of Feynman diagrams, engineers were using similar diagrams to 
reason about electrical circuits and more general networks containing mechanical, hydraulic, thermodynamic and chemical components.  We can formalize this reasoning using props: that is, strict symmetric monoidal categories where the objects are natural numbers, with the tensor product of objects given by addition.  In this approach, each kind of network corresponds to a prop, and each network of this kind is a morphism in that prop.  A network with $m$ inputs and $n$ outputs is a morphism from $m$ to $n$, putting networks together in series is composition, and setting them side by side is tensoring.   Here we work out the details of this approach for various kinds of electrical circuits, starting with circuits made solely of ideal perfectly conductive wires, then circuits with passive linear components, and then circuits that also have voltage and current sources.  Each kind of circuit corresponds to a mathematically natural prop.  We describe the `behavior' of these circuits using morphisms between props.  In particular, we give a new construction of the black-boxing functor of Fong and the first author; unlike the original construction, this new one easily generalizes to circuits with nonlinear components.  We also use a morphism of props to clarify the relation between circuit diagrams and the signal-flow diagrams in control theory.  Technically, the key tools are the Rosebrugh--Sabadini--Walters result relating circuits to special commutative Frobenius monoids, the monadic adjunction between props and signatures, and a result saying which symmetric monoidal categories are equivalent to props.
\end{abstract}

\section{Introduction}

In his 1963 thesis, Lawvere \cite{Law} introduced functorial semantics: the use of categories with specified extra structure as `theories' whose `models' are structure-preserving functors into other such categories.   In particular, a `Lawvere theory' is a category with finite cartesian products and a distinguished object $X$ such that each object is a power $X^n$ for some unique $n$.   These can serve as theories of mathematical structures that are sets $X$ equipped with $n$-ary operations $f \maps X^n \to X$ obeying equational laws.  However, structures of a more linear-algebraic nature are often vector spaces equipped with operations of the form $f \maps X^{\otimes m} \to X^{\otimes n}$.  To extend functorial semantics to these, Mac Lane \cite{Ma65} introduced props---or as he called them, `PROPs', where the acronym stands for `products and permutations'.   A prop is a symmetric monoidal category equipped with a distinguished object $X$ such that every object is a tensor power $X^{\otimes n}$ for some unique $n$.   Working with tensor products rather than cartesian products puts operations having multiple outputs on an equal footing with operations having multiple inputs.  

Already in 1949 Feynman had introduced his famous diagrams, which he used to describe theories of elementary particles \cite{Fe}.   For a theory with just one type of particle, Feynman's method amounts to specifying a prop where an operation $f \maps X^{\otimes m} \to X^{\otimes n}$ describes a process with $m$ particles coming in and $n$ going out.   Although Feynman diagrams quickly caught on in physics \cite{Ka}, only in the 1980s did it become clear that they were a method of depicting morphisms in symmetric monoidal categories.  A key step was the work of Joyal and Street \cite{JS1}, which rigorously justified reasoning in any symmetric monoidal category using `string diagrams'---a generalization of Feynman diagrams.

By now, many mathematical physicists are aware of props and the general idea of functorial semantics.  In constrast, props seem to be virtually unknown in engineering.   However, long before physicists began using Feynman diagrams, engineers were using similar diagrams to describe electrical circuits.  In the 1940's Olson \cite{Ol} explained how to apply circuit diagrams to networks of mechanical, hydraulic, thermodynamic and chemical components.   By 1961, Paynter \cite{Pa} had made the analogies between these various systems mathematically precise.  By 1963 Forrester \cite{Fo} was using circuit diagrams in economics, and in 1984 Odum \cite{Od} published an influential book on their use in biology and ecology.    

We can use props to study circuit diagrams of all these kinds.  The underlying mathematics is similar in each case, so we focus on just one example: electrical circuits.  The reader interested in applying props to other examples can do so with the help of various textbooks that explain the analogies between electrical circuits and other networks \cite{Brown,KMR}.

We illustrate the usefulness of props by giving a new, shorter construction of the `black-boxing functor' introduced by Fong and the first author \cite[Thm.\ 1.1]{BF}.  A `black box' is a system with inputs and outputs whose internal mechanisms are unknown or ignored.    We can treat an electrical circuit as a black box by forgetting its inner workings and recording only the relation it imposes between its inputs and outputs.   Circuits of a given kind with inputs and outputs can be seen as morphisms in a category, where composition uses the outputs of the one circuit as the inputs of another.   Black-boxing is a functor from this category to a suitable category of relations.

In an electrical circuit, associated to each wire there is a pair of variables called the potential $\phi$ and current $I$.   When we black-box such a circuit, we record only the relation it imposes between these variables on its input and output wires.  Since these variables come in pairs, this is a relation between even-dimensional vector spaces.  But these vector spaces turn out to be equipped with extra structure: they are `symplectic' vector spaces, meaning they are equipped with a nondegenerate antisymmetric bilinear form.    Black-boxing gives a relation that respects this extra structure: it is a `Lagrangian' relation.   

Why does symplectic geometry show up when we black-box an electrical circuit?   A circuit made of linear resistors acts to minimize the total power dissipated in the form of heat.   More generally, any circuit made of linear resistors, inductors and capacitors obeys a generalization of this `principle of minimum power'.  Whenever a system obeys a minimum principle, it establishes a Lagrangian relation between input and output variables.  This fact was first noticed in classical mechanics, where systems obey the `principle of least action'.   Indeed, symplectic geometry has its origins in classical mechanics \cite{GS,We1,We2}.   But it applies more generally: for any sort of system governed by a minimum principle, black-boxing should give a functor to some category where the morphisms are Lagrangian relations \cite[Sec.\ 13]{BFP}.

The first step toward proving this for electrical circuits is to treat circuits as morphisms in a suitable category.   We start with circuits made only of ideal perfectly conductive wires.  These are morphisms in a prop we call $\Circ$, defined in Section \ref{sec:conductive}.   In Section \ref{sec:black-boxing_conductive} we construct a black-boxing functor
\[   \blacksquare \maps \Circ \to \Lag\Rel_k \]
sending each such circuit to the relation it defines between its input and output potentials and currents.   Here $\Lag\Rel_k$ is a prop with symplectic vector spaces of the form $k^{2n}$ as objects and linear Lagrangian relations as morphisms, and $\blacksquare$ is a morphism of props.  We work in a purely algebraic fashion, so $k$ here can be any field.
 
In Section \ref{sec:black-boxing} we extend black-boxing to a larger class of circuits that include linear resistors, inductors and capacitors.   This gives a new proof of a result of Fong and the first author: namely, there is a morphism of props
\[   \blacksquare \maps \Circ_k \to \Lag\Rel_k \]
sending each such linear circuit to the Lagrangian relation it defines between its input and output potentials and currents.     The ease with which we can extend the black-boxing functor is due to the fact that all our categories with circuits as morphisms are props.  We can describe these props using generators and relations, so that constructing a black-boxing functor simply requires that we choose where it sends each generator and check that all the relations hold.  In Section \ref{sec:signal} we explain how electric circuits are related to signal-flow diagrams, used in control theory.  Finally, in Section \ref{sec:affine}, we illustrate how props can be used to study nonlinear circuits.

\subsubsection*{Plan of the paper}

In Section \ref{sec:circuits} we explain a general notion of `$\L$-circuit' first introduced by  Rosebrugh, Sabadini and Walters \cite{RSW2}.   This is a cospan of finite sets where the apex is the set of nodes of a graph whose edges are labelled by elements of some set $\L$.  In applications to electrical engineering, the elements of $\L$ describe different `circuit elements' such as resistors, inductors and capacitors.  We discuss a symmetric monoidal category $\Circ_\L$ whose objects are finite sets and whose morphisms are (isomorphism classes of) $\L$-circuits.    

In Section \ref{sec:conductive} we consider $\Circ_\L$ when $\L$ is a 1-element set.   We call this category simply $\Circ$.  In applications to engineering, a morphism in $\Circ$ describes circuit made solely of ideal conductive wires.  We show how such a circuit can be simplified in two successive stages, described by symmetric monoidal functors:
\[     \Circ \stackrel{G}{\longrightarrow} \Fin\Cospan \stackrel{H}{\longrightarrow} 
\Fin\Corel .\]
Here $\Fin\Cospan$ is the category of cospans of finite sets, while $\Fin\Corel$ is the category of `corelations' between finite sets.  Corelations, categorically dual to relations, are already known to play an important role in network theory \cite{BF,CF,Fo,Fo3}.  Just as a relation can be seen as a jointly monic span, a corelation can be seen as a jointly epic cospan.  The functor $G$ crushes any graph down to its set of components, while $H$ reduces any cospan to a jointly epic one.

In Section \ref{sec:props} we turn to props.  Propositions \ref{prop:strictification_1} and \ref{prop:strictification_2}, proved in Appendix \ref{sec:symmoncats} with the help of Steve Lack, characterize which symmetric monoidal categories are equivalent to props and which symmetric monoidal functors are isomorphic to morphisms of props.   We use these to find props equivalent to $\Circ_\L$, $\Circ$, $\Fin\Cospan$ and $\Fin\Corel$, and to reinterpret $G$ and $H$ as morphisms of props.   In Section \ref{sec:presenting_props} we discuss presentations of props.  Proposition \ref{prop:monadic}, proved in Appendix \ref{sec:monadic} using a result of Todd Trimble, shows that the category of props is monadic over the category of `signatures', $\Set^{\N \times \N}$.  This lets us work with props using generators and relations.   We conclude by recalling a presentation of $\Fin\Cospan$ due to Lack \cite{La} and a presentation of $\Fin\Corel$ due to Fong and the second author \cite{CF}.

In Section \ref{sec:linear_relations} we introduce the prop $\Fin\Rel_k$.  This prop is equivalent to the symmetric monoidal category with finite-dimensional vector spaces over the field $k$ as objects and linear relations as morphisms, with \emph{direct sum} as its tensor product.  A presentation of this prop was given by Erbele and the first author \cite{BE,E}, and independently by Bonchi,  Soboci\'nski and Zanasi \cite{BSZ,BSZ2,Za}.  In Section \ref{sec:prop_of_circuits} we state a fundamental result of Rosebrugh, Sabadini and Walters \cite{RSW2}.  This result can be seen as giving a presentation of $\Circ_\L$.  Equivalently, it says that $\Circ_\L$ is the coproduct, in the category of props, of $\Fin\Cospan$ and the free prop on a set of unary operations, one for each element of $\L$.  This result makes it easy to construct morphisms from $\Circ_\L$ to other props.

In Section \ref{sec:black-boxing_conductive} we introduce the prop $\Lag\Rel_k$ where morphisms are Lagrangian linear relations between symplectic vector spaces, and
construct the black-boxing functor $\blacksquare \maps \Circ \to \Lag\Rel_k$.    Mathematically, this functor is the composite
\[     \Circ \stackrel{G}{\longrightarrow} \Fin\Cospan \stackrel{H}{\longrightarrow} 
\Fin\Corel \stackrel{K}{\longrightarrow} \Lag\Rel_k \]
where $K$ is a symmetric monoidal functor defined by its action on the generators of $\Fin\Corel$.  In applications to electrical engineering, the black-boxing functor maps any circuit of ideal conductive wires to its `behavior': that is, to the relation that it imposes on the potentials and currents at its inputs and outputs.   

In Section \ref{sec:black-boxing} we extend the black-boxing functor to 
circuits that include resistors, inductors, capacitors and certain other linear circuit
elements.   The most elegant prop having such circuits as morphisms is
$\Circ_k$, meaning $\Circ_\L$ with the label set $\L$ taken to be the field
$k$.   We characterize this black-boxing functor $\blacksquare \maps \Circ_k \to \Lag\Rel_k$ in Theorem \ref{thm:black-boxing_2}.

In Section \ref{sec:signal} we expand the scope of inquiry to include `signal-flow diagrams', a type of diagram used in control theory.  We recall the work of Erbele and others showing that signal-flow diagrams are a syntax for linear relations \cite{BE,E,BSZ,BSZ2,Za}.  Concretely, this means that signal-flow diagrams are morphisms in a free prop $\SigFlow_k$ with the same generators as $\Fin\Rel_k$, but no relations.  There is thus a morphism of props 
\[         \square \maps \SigFlow_k \to \Fin\Rel_k \]
mapping any signal-flow diagrams to the linear relation that it denotes.   It is natural to wonder how this is related to the black-boxing functor 
\[           \blacksquare \maps \Circ_k \to \Lag\Rel_k. \]  
The answer involves the free prop $\Ccirc_k$ which arises when we take the simplest presentation of $\Circ_k$ and omit the relations.  This comes with a map $P \maps \Ccirc_k \to \Circ_k$ which reinstates those relations, and in Theorem \ref{thm:black-and-white-boxing} we show there is a map of props $T \maps \Ccirc_k \to \SigFlow_k$ making this diagram commute:
\[  \xymatrix{
 \Ccirc_k \phantom{ |} \ar[r]^-{P} \ar[d]_{T} & \Circ_k \phantom{} \ar[rr]^-{\blacksquare} & & \Lag\Rel_k \phantom{|} \ar@{^{(}->}[d] \\
      \SigFlow_k \phantom{ |}\ar[rrr]^-{\square} & & & \Fin\Rel_k. \phantom{|}
      }
\]

Finally, in Section \ref{sec:affine} we illustrate how props can also be used to study nonlinear circuits.  Namely, we show how to include voltage and current sources.  Black-boxing these gives Lagrangian \emph{affine} relations between symplectic vector spaces.  

\subsubsection*{Acknowledgements}  

We thank Jason Erbele, Brendan Fong, Blake Pollard, and Nick Woods for helpful conversations.  We are especially grateful to Steve Lack for his help with Appendix \ref{sec:symmoncats}, Charles Rezk and Todd Trimble for their help with Appendix \ref{sec:monadic}, and Jamie Vicary for suggesting early on that we undertake a project of this sort.  We also thank the referees for their corrections.

\section{Circuits}
\label{sec:circuits}

Rosebrugh, Sabadini and Walters \cite{RSW2} explained how to construct a category where the morphisms are circuits made of wires with `circuit elements' on them, such as resistors, inductors and capacitors.    We use their work to construct a symmetric monoidal category $\Circ_\L$ for any set $\L$ of circuit elements. 

To begin with, a circuit consists of a finite graph with `wires' as edges:

\begin{definition}
A \define{graph} is a finite set $E$ of \define{edges} and a finite set $N$ of \define{nodes} equipped with a pair of functions $s,t \maps E \to N$ assigning to 
each edge its \define{source} and \define{target}.  We say that $e \in E$ is an edge \define{from} $s(e)$ \define{to} $t(e)$.  
\end{definition}

We then label the edges with elements of some set $\L$:

\begin{definition}
Given a set $\L$ of \define{labels}, an \define{$\L$-graph} is a graph $s,t\maps E\to N$
 equipped with a function $\ell \maps E \to \L$ assigning a label to each edge.
\end{definition}

For example, we can describe a circuit made from resistors by labeling each
edge with a `resistance' chosen from the set $\L = (0,\infty)$.   Here is a typical
$\L$-graph in this case:

\begin{center}
    \begin{tikzpicture}[circuit ee IEC, set resistor graphic=var resistor IEC graphic]
\scalebox{1}{	
      \node[contact]         (A) at (0,0) {};
      \node[contact]         (B) at (3,0) {};
      \node[contact]         (C) at (1.5,-2.6) {};
      \coordinate         (ua) at (.5,.25) {};
      \coordinate         (ub) at (2.5,.25) {};
      \coordinate         (la) at (.5,-.25) {};
      \coordinate         (lb) at (2.5,-.25) {};
      \path (A) edge (ua);
      \path (A) edge (la);
      \path (B) edge (ub);
      \path (B) edge (lb);
      \path (ua) edge  [->-=.5] node[label={[label distance=1pt]90:{$2$}}] {} (ub);
      \path (la) edge  [->-=.5] node[label={[label distance=1pt]270:{$3$}}] {} (lb);
      \path (A) edge  [->-=.5] node[label={[label distance=2pt]180:{$1$}}] {} (C);
      \path (C) edge  [->-=.5] node[label={[label distance=2pt]0:{$0.9$}}] {} (B);
}
    \end{tikzpicture}
\end{center}

\noindent 
However, to make circuits into the morphisms of a category, we need to specify
some `input' and `output' nodes.  We could do this by picking out two subsets of the set of nodes, but it turns out to be better to use two maps into this set.

\begin{definition}
Given a set $\L$ and finite sets $X$ and $Y$, an \define{$\L$-circuit from $X$ to $Y$} is a cospan of finite sets
\[
  \xymatrix{
    & N \\
    X \ar[ur]^{i} && Y \ar[ul]_o
  }
\]
together with an $\L$-graph
\[ \xymatrix{\L & E \ar@<2.5pt>[r]^{s} \ar@<-2.5pt>[r]_{t} \ar[l]_{\ell} & N.} \] 
We call the sets $i(X)$, $o(Y)$, and $\partial N := i(X) \cup o(Y)$ the \define{inputs},  \define{outputs}, and \define{terminals} of the $\L$-circuit, respectively.
\end{definition}

\noindent
Here is an example of an $\L$-circuit:

\begin{center}
    \begin{tikzpicture}[circuit ee IEC, set resistor graphic=var resistor IEC graphic]
\scalebox{1}{
      {\node[circle,draw,inner sep=1pt,fill=gray,color=purple]         (x) at
	(-3,-1.3) {};
	\node at (-3,-2.6) {$X$};}
      \node[contact]         (A) at (0,0) {};
      \node[contact]         (B) at (3,0) {};
      \node[contact]         (C) at (1.5,-2.6) {};
      {\node[circle,draw,inner sep=1pt,fill=gray,color=purple]         (y1) at
	(6,-.6) {};
	  \node[circle,draw,inner sep=1pt,fill=gray,color=purple]         (y2) at
	  (6,-2) {};
	  \node at (6,-2.6) {$Y$};}
      \coordinate         (ua) at (.5,.25) {};
      \coordinate         (ub) at (2.5,.25) {};
      \coordinate         (la) at (.5,-.25) {};
      \coordinate         (lb) at (2.5,-.25) {};
      \path (A) edge (ua);
      \path (A) edge (la);
      \path (B) edge (ub);
      \path (B) edge (lb);
      \path (ua) edge  [->-=.5] node[label={[label distance=1pt]90:{$2$}}] {} (ub);
      \path (la) edge  [->-=.5] node[label={[label distance=1pt]270:{$3$}}] {} (lb);
      \path (A) edge  [->-=.5] node[label={[label distance=2pt]180:{$1$}}] {} (C);
      \path (C) edge  [->-=.5] node[label={[label distance=2pt]0:{$0.9$}}] {} (B);
      {
	\path[color=purple, very thick, shorten >=10pt, shorten <=5pt, ->, >=stealth] (x) edge (A);
	\path[color=purple, very thick, shorten >=10pt, shorten <=5pt, ->, >=stealth] (y1) edge (B);
	\path[color=purple, very thick, shorten >=10pt, shorten <=5pt, ->, >=stealth] (y2)
      edge (B);}
}
    \end{tikzpicture}
  \end{center}
\noindent
Note that in this example, two points of $Y$ map to the same node.

We can now compose $\L$-circuits by gluing the outputs of one to the inputs of another.  
For example, we can compose these two $\L$-circuits:
 \begin{center}
    \begin{tikzpicture}[circuit ee IEC, set resistor graphic=var resistor IEC
      graphic,scale=.5]
      \node[circle,draw,inner sep=1pt,fill=gray,color=purple]         (x) at
      (-3,-1.3) {};
      \node at (-3,-3.2) {\footnotesize $X$};
      \node[circle,draw,inner sep=1pt,fill]         (A) at (0,0) {};
      \node[circle,draw,inner sep=1pt,fill]         (B) at (3,0) {};
      \node[circle,draw,inner sep=1pt,fill]         (C) at (1.5,-2.6) {};
      \node[circle,draw,inner sep=1pt,fill=gray,color=purple]         (y1) at
      (6,-.6) {};
      \node[circle,draw,inner sep=1pt,fill=gray,color=purple]         (y2) at
      (6,-2) {};
      \node at (6,-3.2) {\footnotesize $Y$};
      \coordinate         (ua) at (.5,.25) {};
      \coordinate         (ub) at (2.5,.25) {};
      \coordinate         (la) at (.5,-.25) {};
      \coordinate         (lb) at (2.5,-.25) {};
      \path (A) edge (ua);
      \path (A) edge (la);
      \path (B) edge (ub);
      \path (B) edge (lb);
      \path (ua) edge  [->-=.5] node[label={[label distance=1pt]90:{\footnotesize $2$}}] {} (ub);
      \path (la) edge  [->-=.5] node[label={[label distance=1pt]270:{\footnotesize $3$}}] {} (lb);
      \path (A) edge  [->-=.5] node[label={[label distance=-1pt]180:{\footnotesize $1$}}] {} (C);
      \path (C) edge  [->-=.5] node[label={[label distance=-1pt]0:{\footnotesize $0.9$}}] {} (B);
      \path[color=purple, very thick, shorten >=10pt, shorten <=5pt, ->, >=stealth] (x) edge (A);
      \path[color=purple, very thick, shorten >=10pt, shorten <=5pt, ->, >=stealth] (y1) edge (B);
      \path[color=purple, very thick, shorten >=10pt, shorten <=5pt, ->, >=stealth] (y2)
      edge (B);

      \node[circle,draw,inner sep=1pt,fill]         (A') at (9,0) {};
      \node[circle,draw,inner sep=1pt,fill]         (B') at (12,0) {};
      \node[circle,draw,inner sep=1pt,fill]         (C') at (10.5,-2.6) {};
      \node[circle,draw,inner sep=1pt,fill=gray,color=purple]         (z1) at
      (15,-.6) {};
      \node[circle,draw,inner sep=1pt,fill=gray,color=purple]         (z2) at (15,-2) {};
      \node at (15,-3.2) {\footnotesize $Z$};
      \path (A') edge  [->-=.5] node[above] {\footnotesize $5$} (B');
      \path (C') edge  [->-=.5] node[right] {\footnotesize $8$} (B');
      \path[color=purple, very thick, shorten >=10pt, shorten <=5pt, ->, >=stealth] (y1) edge (A');
      \path[color=purple, very thick, shorten >=10pt, shorten <=5pt, ->, >=stealth] (y2)
      edge (C');
      \path[color=purple, very thick, shorten >=10pt, shorten <=5pt, ->, >=stealth] (z1) edge (B');
      \path[color=purple, very thick, shorten >=10pt, shorten <=5pt, ->, >=stealth]
      (z2) edge (C');
    \end{tikzpicture}
\end{center}

\noindent
and obtain this one: 

\begin{center}
    \begin{tikzpicture}[circuit ee IEC, set resistor graphic=var resistor IEC
      graphic,scale=0.5]
      \node[circle,draw,inner sep=1pt,fill=gray,color=purple]         (x) at (-4,-1.3) {};
      \node at (-4,-3.2) {\footnotesize $X$};
      \node[circle,draw,inner sep=1pt,fill]         (A) at (0,0) {};
      \node[circle,draw,inner sep=1pt,fill]         (B) at (3,0) {};
      \node[circle,draw,inner sep=1pt,fill]         (C) at (1.5,-2.6) {};
      \node[circle,draw,inner sep=1pt,fill]         (D) at (6,0) {};
      \coordinate         (ua) at (.5,.25) {};
      \coordinate         (ub) at (2.5,.25) {};
      \coordinate         (la) at (.5,-.25) {};
      \coordinate         (lb) at (2.5,-.25) {};
      \coordinate         (ub2) at (3.5,.25) {};
      \coordinate         (ud) at (5.5,.25) {};
      \coordinate         (lb2) at (3.5,-.25) {};
      \coordinate         (ld) at (5.5,-.25) {};
      \path (A) edge (ua);
      \path (A) edge (la);
      \path (B) edge (ub);
      \path (B) edge (lb);
      \path (B) edge (ub2);
      \path (B) edge (lb2);
      \path (D) edge (ud);
      \path (D) edge (ld);
      \node[circle,draw,inner sep=1pt,fill=gray,color=purple]         (z1) at
      (10,-.6) {};
      \node[circle,draw,inner sep=1pt,fill=gray,color=purple]         (z2) at (10,-2) {};
      \node at (10,-3.2) {\footnotesize $Z$};
      \path (ua) edge  [->-=.5] node[above] {\footnotesize $2$} (ub);
      \path (la) edge  [->-=.5] node[below] {\footnotesize $3$} (lb);
      \path (A) edge  [->-=.5] node[left] {\footnotesize $1$} (C);
      \path (C) edge  [->-=.5] node[right] {\footnotesize $0.9$} (B);
      \path (ub2) edge  [->-=.5] node[above] {\footnotesize $5$} (ud);
      \path (lb2) edge  [->-=.5] node[below] {\footnotesize $8$} (ld);
      \path[color=purple, very thick, shorten >=10pt, shorten <=5pt, ->, >=stealth] (x) edge (A);
      \path[color=purple, very thick, shorten >=10pt, shorten <=5pt, ->, >=stealth] (z1)
      edge (D);
      \path[bend left, color=purple, very thick, shorten >=10pt, shorten <=5pt, ->, >=stealth] (z2)
      edge (B);
    \end{tikzpicture}
  \end{center}

We formalize this using composition of cospans.  Given cospans 
$X \stackrel{i}{\rightarrow} N \stackrel{o}{\leftarrow} Y$ and 
$Y\stackrel{i'}{\rightarrow} N' \stackrel{o'}{\leftarrow} Z$, their composite is $X \stackrel{fi}{\longrightarrow} N+_Y N'
\stackrel{f'o'}{\longleftarrow} Z$, where the finite set $N+_Y N'$ and the functions $f$ and $f'$ are defined by this pushout:
\[
  \xymatrix{
    && N+_YN' \\
    & \; N \; \ar[ur]^f && \; N' \ar[ul]_{f'} \\
 \; \;\; X \ar[ur]^i \;\; && \; Y \; \ar[ul]_o \ar[ur]^{i'} && \;\; Z. \;\; \ar[ul]_{o'}
  }
\]
To make the composite cospan into an $\L$-circuit, we must choose an $\L$-graph whose set of nodes is $N +_Y N'$.  We use this $\L$-graph:
\[   \xymatrix{\L & E + E' \ar@<2.5pt>[rr]^{j(s+s')} \ar@<-2.5pt>[rr]_{j(t+t')} \ar[l]_{\langle \ell, \ell'\rangle} && N +_Y N' } \]
where 
\[  \langle \ell, \ell' \rangle \maps E + E' \to \L \]
is the `copairing' of $\ell$ and $\ell$, i.e., the function that
equals $\ell$ on $E$ and $\ell'$ on $E'$, while the functions
\[  s + s', t + t' \maps E + E' \to N + N' \]
are coproducts, and
\[   j_{N,N'} \maps N + N' \to N +_Y N' \]
is the natural map from the coproduct to the pushout.  

However, the pushout is only unique up to isomorphism, so to make composition
associative we must use isomorphism classes of $\L$-circuits. Since an 
$\L$-circuit can be seen as a graph with an extra structure, an isomorphism 
between $\L$-circuits is an isomorphism of graphs that preserves this extra
structure:

\begin{definition}
Two $\L$-circuits
\[
  \xymatrix{
    & N \\
    X \ar[ur]^{i} && Y \ar[ul]_o
  } \qquad
\xymatrix{\L & E \ar@<2.5pt>[r]^{s} \ar@<-2.5pt>[r]_{t} \ar[l]_{\ell} & N} 
\]
and
\[ 
  \xymatrix{
    & N' \\
    X \ar[ur]^{i'} && Y \ar[ul]_{o'}
  } \qquad
 \xymatrix{\L & E' \ar@<2.5pt>[r]^{s'} \ar@<-2.5pt>[r]_{t'} \ar[l]_{\ell'} & N'} \] 
are \define{isomorphic} if there are bijections 
\[   f_E \maps E \to E', \quad f_N \maps N \to N'  \]
such that these diagrams commute:
\[ 
  \xymatrix{
    & N \ar[dd]_{f_N} \\
    X \ar[ur]^{i} \ar[dr]_{i'} && Y \ar[ul]_o \ar[dl]^{o'} \\
    & N' 
  }   \qquad  
\xymatrix{&  E \ar[dd]^{f_E} \ar[dl]_{\ell} \\  \L \\  & E' \ar[ul]^{\ell'} }
\]
\[ 
 \xymatrix{E \ar[d]_{f_E} \ar[r]^s & N \ar[d]^{f_N} \\ E' \ar[r]_{s'} & N'} 
\qquad 
\xymatrix{E \ar[d]_{f_E} \ar[r]^t & N \ar[d]^{f_N} \\ E' \ar[r]_{t'} & N'} 
\] 
\end{definition}
\noindent
It is easy to check that composition 
of $\L$-circuits is well-defined and associative at the level of isomorphism classes.  

We can also `tensor' two $\L$-circuits by setting them side by side.  This uses coproducts, or disjoint unions, both of sets and of $\L$-graphs.  For example, tensoring this
$\L$-circuit:
\begin{center}
      \begin{tikzpicture}[circuit ee IEC, set resistor graphic=var resistor IEC
	graphic,scale=.4]
	\node[circle,draw,inner sep=1pt,fill=gray,color=purple]         (x) at
	(-2.8,-1.3) {};
	\node at (-2.8,-3.2) {\footnotesize $X$};
	\node[circle,draw,inner sep=1pt,fill]         (A) at (0,0) {};
	\node[circle,draw,inner sep=1pt,fill]         (B) at (3,0) {};
	\node[circle,draw,inner sep=1pt,fill]         (C) at (1.5,-2.6) {};
	\node[circle,draw,inner sep=1pt,fill=gray,color=purple]         (y1) at
	(5.8,-.6) {};
	\node[circle,draw,inner sep=1pt,fill=gray,color=purple]         (y2) at
	(5.8,-2) {};
	\node at (5.8,-3.2) {\footnotesize $Y$};
	\coordinate         (ua) at (.5,.25) {};
	\coordinate         (ub) at (2.5,.25) {};
	\coordinate         (la) at (.5,-.25) {};
	\coordinate         (lb) at (2.5,-.25) {};
	\path (A) edge (ua);
	\path (A) edge (la);
	\path (B) edge (ub);
	\path (B) edge (lb);
	\path (ua) edge  [->-=.5] node[label={[label distance=0pt]90:{\footnotesize $2$}}] {} (ub);
	\path (la) edge  [->-=.5] node[label={[label distance=0pt]270:{\footnotesize $3$}}] {} (lb);
	\path (A) edge  [->-=.5] node[label={[label distance=-2pt]180:{\footnotesize $1$}}] {} (C);
	\path (C) edge  [->-=.5] node[label={[label distance=-2pt]0:{\footnotesize $0.9$}}] {} (B);
	\path[color=purple, very thick, shorten >=10pt, shorten <=5pt, ->, >=stealth] (x) edge (A);
	\path[color=purple, very thick, shorten >=10pt, shorten <=5pt, ->, >=stealth] (y1) edge (B);
	\path[color=purple, very thick, shorten >=10pt, shorten <=5pt, ->, >=stealth] (y2)
	edge (B);
      \end{tikzpicture}
\end{center}
with this one:
\begin{center}
      \begin{tikzpicture}[circuit ee IEC, set resistor graphic=var resistor IEC
	graphic,scale=.4]
	\node[circle,draw,inner sep=1pt,fill=gray,color=purple]         (x) at
	(-2.8,0) {};
	\node at (-2.8,-1.5) {\footnotesize $X'$};
	\node[circle,draw,inner sep=1pt,fill]         (A) at (0,0) {};
	\node[circle,draw,inner sep=1pt,fill]         (B) at (3,0) {};
	\node[circle,draw,inner sep=1pt,fill=gray,color=purple]         (y1) at
	(5.8,0) {};
	\node at (5.8,-1.5) {\footnotesize $Y'$};
	\coordinate         (ua) at (.5,.25) {};
	\coordinate         (ub) at (2.5,.25) {};
	\coordinate         (la) at (.5,-.25) {};
	\coordinate         (lb) at (2.5,-.25) {};
	\path (A) edge (ua);
	\path (A) edge (la);
	\path (B) edge (ub);
	\path (B) edge (lb);
	\path (ua) edge  [->-=.5] node[label={[label distance=0pt]90:{\footnotesize $5$}}] {} (ub);
	\path (la) edge  [->-=.5] node[label={[label distance=0pt]270:{\footnotesize $3.2$}}] {} (lb);
	\path[color=purple, very thick, shorten >=10pt, shorten <=5pt, ->, >=stealth] (x) edge (A);
	\path[color=purple, very thick, shorten >=10pt, shorten <=5pt, ->, >=stealth] (y1) edge (B);
      \end{tikzpicture}
\end{center}
give this one:
\begin{center}
      \begin{tikzpicture}[circuit ee IEC, set resistor graphic=var resistor IEC
	graphic,scale=.4]
	\node[circle,draw,inner sep=1pt,fill=gray,color=purple]         (x) at
	(-2.8,-1.3) {};
	\node at (-2.8,-5.5) {\footnotesize $X+X'$};
	\node[circle,draw,inner sep=1pt,fill]         (A) at (0,0) {};
	\node[circle,draw,inner sep=1pt,fill]         (B) at (3,0) {};
	\node[circle,draw,inner sep=1pt,fill]         (C) at (1.5,-2.6) {};
	\node[circle,draw,inner sep=1pt,fill=gray,color=purple]         (y1) at
	(5.8,-.6) {};
	\node[circle,draw,inner sep=1pt,fill=gray,color=purple]         (y2) at
	(5.8,-2) {};
	\node at (5.8,-5.5) {\footnotesize $Y+Y'$};
	\coordinate         (ua) at (.5,.25) {};
	\coordinate         (ub) at (2.5,.25) {};
	\coordinate         (la) at (.5,-.25) {};
	\coordinate         (lb) at (2.5,-.25) {};
	\path (A) edge (ua);
	\path (A) edge (la);
	\path (B) edge (ub);
	\path (B) edge (lb);
	\path (ua) edge  [->-=.5] node[label={[label distance=1pt]90:{\footnotesize $2$}}] {} (ub);
	\path (la) edge  [->-=.5] node[label={[label distance=1pt]270:{\footnotesize $3$}}] {} (lb);
	\path (A) edge  [->-=.5] node[label={[label distance=-2pt]180:{\footnotesize $1$}}] {} (C);
	\path (C) edge  [->-=.5] node[label={[label distance=-2pt]0:{\footnotesize $0.9$}}] {} (B);
	\path[color=purple, very thick, shorten >=10pt, shorten <=5pt, ->, >=stealth] (x) edge (A);
	\path[color=purple, very thick, shorten >=10pt, shorten <=5pt, ->, >=stealth] (y1) edge (B);
	\path[color=purple, very thick, shorten >=10pt, shorten <=5pt, ->, >=stealth] (y2)
	edge (B);
	\node[circle,draw,inner sep=1pt,fill=gray,color=purple]         (x2) at
	(-2.8,-2.7) {};
	\node[circle,draw,inner sep=1pt,fill]         (A1) at (0,-5) {};
	\node[circle,draw,inner sep=1pt,fill]         (B1) at (3,-5) {};
	\node[circle,draw,inner sep=1pt,fill=gray,color=purple]         (y3) at
	(5.8,-3.4) {};
	\coordinate         (ua1) at (.5,-4.75) {};
	\coordinate         (ub1) at (2.5,-4.75) {};
	\coordinate         (la1) at (.5,-5.25) {};
	\coordinate         (lb1) at (2.5,-5.25) {};
	\path (A1) edge (ua1);
	\path (A1) edge (la1);
	\path (B1) edge (ub1);
	\path (B1) edge (lb1);
	\path (ua1) edge  [->-=.5] node[label={[label distance=0pt]90:{\footnotesize $5$}}] {} (ub1);
	\path (la1) edge  [->-=.5] node[label={[label distance=0pt]270:{\footnotesize $3.2$}}] {} (lb1);
	\path[color=purple, very thick, shorten >=10pt, shorten <=5pt, ->,
	>=stealth] (x2) edge (A1);
	\path[color=purple, very thick, shorten >=10pt, shorten <=5pt, ->,
	>=stealth] (y3) edge (B1);
      \end{tikzpicture}
\end{center}
In general, given $\L$-circuits
\[  \xymatrix{
    & N \\
    X \ar[ur]^{i} && Y \ar[ul]_o
  } \qquad \qquad \; 
\xymatrix{\L & E \ar@<2.5pt>[r]^{s} \ar@<-2.5pt>[r]_{t} \ar[l]_{\ell} & N} 
\]
and
\[ 
  \xymatrix{
    & N' \\
    X' \ar[ur]^{i'} && Y' \ar[ul]_{o'}
  } \qquad \qquad \;
 \xymatrix{\L & E' \ar@<2.5pt>[r]^{s'} \ar@<-2.5pt>[r]_{t'} \ar[l]_{\ell'} & N'} \] 
their tensor product is
\[  \xymatrix{
    & N + N'\\
    X + X' \ar[ur]^{i+i'} && Y + Y'\ar[ul]_{o+o'}
  } \qquad
\xymatrix{\L & E + E' \ar@<2.5pt>[r]^{s+s'} \ar@<-2.5pt>[r]_{t+t'} \ar[l]_{\langle \ell, \ell' \rangle} & N + N'.} 
\]
This operation is well-defined at the level of isomorphism classes.  Indeed, we obtain a symmetric monoidal category:

\begin{proposition}
\label{prop:lcirc_as_symmoncat}
For any set $\L$, there is a symmetric monoidal category \define{$\Circ_\L$} where the objects are finite sets, the morphisms are isomorphism classes of $\L$-circuits, and composition and the tensor product are defined as above.
\end{proposition}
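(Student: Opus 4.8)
The plan is to exhibit $\Circ_\L$ as a ``decorated cospan'' category, so that its symmetric monoidal structure is inherited from that of cospans of finite sets. It is standard that cospans of finite sets form a symmetric monoidal category under composition by pushout and tensor by disjoint union, with associator, unitors and braiding coming from the evident natural isomorphisms of finite sets. To incorporate the $\L$-graph data, consider the functor $F \maps \Fin\Set \to \Set$ sending a finite set $N$ to the set of isomorphism classes of $\L$-graphs whose node set is $N$. Since an $\L$-graph on $N$ amounts to a finite set $E$ together with a single function $E \to N \times N \times \L$ (encoding $s$, $t$ and $\ell$), an isomorphism class of such is just a finite multiset of elements of $N \times N \times \L$, so $F(N)$ is a bona fide set; a function $g \maps N \to N'$ acts on it by pushing multisets forward along $g \times g \times \id_\L$. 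This $F$ is lax symmetric monoidal: the laxators $F(N) \times F(N') \to F(N + N')$ send a pair of multisets to their union inside $(N+N') \times (N+N') \times \L$ along the coproduct injections, and the required coherence diagrams are immediate. Now a morphism of the decorated-cospan category associated to $F$---an isomorphism class of cospan $X \to N \leftarrow Y$ of finite sets decorated by an element of $F(N)$---is precisely an isomorphism class of $\L$-circuit from $X$ to $Y$, and the composites and tensor products match those described in the text. Hence $\Circ_\L$ is symmetric monoidal.

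For readers who prefer a hands-on argument, the same facts can be checked directly, and I would organize the verification as follows. First, show that cospan composition by pushout, carrying the copaired and pushed-forward $\L$-graph $\langle \ell, \ell' \rangle$ on $N +_Y N'$, is well defined on isomorphism classes and associative, using that the pushout is unique up to a canonical isomorphism and that this isomorphism transports the $\L$-graph decoration accordingly. Second, check that the cospan with both legs the identity on $X$, equipped with the $\L$-graph having no edges, is a two-sided identity. Third, check that disjoint union of cospans together with disjoint union of $\L$-graphs is functorial---this uses that coproducts commute with pushouts---and that it preserves identities. Fourth, take the associator, the left and right unitors, and the braiding to be the obvious bijections of underlying finite sets, regarded as cospans carrying the edgeless $\L$-graph; their naturality and the pentagon, triangle and hexagon identities then reduce to the corresponding statements for $+$ on $\Fin\Set$, since the edgeless decoration is preserved under every composite involved.

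The one point that requires real care is the bookkeeping of isomorphism classes. Because the pushout $N +_Y N'$ is only defined up to isomorphism, composition of $\L$-circuits is not strictly well defined, and neither is associativity; one must check that passing to isomorphism classes of $\L$-circuits---which simultaneously quotients the node sets and the edge sets---repairs this, in the sense that any two choices of pushout are related by an isomorphism carrying the one composite decoration to the other. This is exactly the issue that the decorated-cospan formalism handles uniformly, which is why I would route the proof through the functor $F$; everything else is the routine diagram-chasing listed above.
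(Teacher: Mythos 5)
Your proposal is correct and follows essentially the same route as the paper, whose ``proof'' simply cites Rosebrugh--Sabadini--Walters and, alternatively, Fong's decorated-cospan machinery. Your only real addition is to make the decorated-cospan argument explicit for arbitrary $\L$ by identifying the decorations on a node set $N$ with finite multisets in $N \times N \times \L$ (so that they form a set and push forward functorially), which is precisely the detail needed to see that Fong's construction applies beyond the case $\L = (0,\infty)$ treated in the cited reference.
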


\begin{proof} 
This was proved by Rosebrugh, Sabadini and Walters \cite{RSW2}, who call this 
category \break $\mathrm{Csp}(\mathrm{Graph}_\mathrm{fin}/\Sigma)$,
where $\Sigma$ is their name for the label set $\L$.  Another style of proof uses Fong's theory of decorated cospans, which implies that $\Circ_\L$ is a special sort of symmetric monoidal
category called a `hypergraph category'.  Fong used this method in the special case where $\L$ is the set of positive real numbers \cite[Sec.\ 5.1]{Fo1}, but the argument does not depend on the nature of the set $\L$.  
\end{proof}

In fact, the work of Courser \cite{Co} and Stay \cite{St} implies that $\Circ_\L$ comes from a compact closed symmetric monoidal bicategory.  The objects in this bicategory are still finite sets, but the morphisms in this bicategory are actual $\L$-circuits, not isomorphism classes.  We expect this bicategorical approach to become important, but for now we are content to work with mere categories.

\section{Circuits of ideal conductive wires}
\label{sec:conductive}

The simplest circuits are those made solely of ideal perfectly conductive wires.  Electrical engineers would consider these circuits trivial.   Nonetheless, they provide the foundation on which all our
results rest.   In this case the underlying graph of the circuit has unlabeled edges---or equivalently, the set of labels has a single element.    So, we make the following definition:

\begin{definition}
\label{defn:Circ}
Let $\Circ$ be symmetric monoidal category $\Circ_\L$ where $\L = \{\ell\}$ is a 1-element set.
\end{definition}

We can treat a morphism in $\Circ$ as an isomorphism class of cospans of finite sets
\[
  \xymatrix{
    & N \\
    X \ar[ur]^{i} && Y \ar[ul]_o
  }
\]
together with a graph $\Gamma$ having $N$ as its set of vertices:
\[\Gamma = \{ \xymatrix{ E \ar@<2.5pt>[r]^{s} \ar@<-2.5pt>[r]_{t} & N} \} \] 
For example, a morphism in $\Circ$ might look like this:
\[
\begin{aligned}
\begin{tikzpicture}[scale=.85, circuit ee IEC]
\begin{pgfonlayer}{nodelayer}
\node [contact, outer sep=5pt] (6) at (-3, 1.2) {};
\node [contact, outer sep=5pt] (7) at (-3, -0.5) {};
\node [contact, outer sep=5pt] (8) at (-3, 0.5) {};
\node [contact, outer sep=5pt] (9) at (-3, -0) {};
\node [contact, outer sep=5pt] (10) at (-3, -1) {};

\node [contact, outer sep=5pt] (15) at (-1, 0.75) {};
	\coordinate         (15a) at (-1.5,1.2) {};
	\coordinate         (15b) at (-1.5,.5) {};
\node [contact, outer sep=5pt] (18) at (0, 0.75) {};
	\coordinate         (18a) at (.5,1.25) {};
\node [contact, outer sep=5pt](14) at (-0.5, 1.25) {};
\node [contact, outer sep=5pt](19) at (-0.5, 0.25) {};
\node [contact, outer sep=5pt](28) at (1, 0.25) {};
\node [contact, outer sep=5pt] (16) at (-1.5, -0.25) {};
	\coordinate         (16a) at (-1.75,0) {};
	\coordinate         (16b) at (-1.75,-.5) {};

\node [contact, outer sep=5pt] (29) at (0.5, -0.5) {};
\node [contact, outer sep=5pt] (11) at (0.5, 0.0) {};
\node [contact, outer sep=5pt] (17) at (-0.5, -1.5) {};
	\coordinate         (17a) at (-2.5,-1.5) {};
\node [contact, outer sep=5pt] (32) at (-1, -1) {};
\node [contact, outer sep=5pt] (33) at (0, -1) {};
	\coordinate         (33a) at (.25,-.75) {};
\node [contact, outer sep=5pt] (-2) at (2, 1.25) {};
\node [contact, outer sep=5pt] (-1) at (2, 0.75) {};
\node [contact, outer sep=5pt] (0) at (2, 0.25) {};
\node [contact, outer sep=5pt] (1) at (2, -0.25) {};
	\coordinate         (1a) at (1.75,-.5) {};
	\coordinate         (1b) at (1.75,0) {};
\node [contact, outer sep=5pt] (2) at (2, -0.75) {};
\node [contact, outer sep=5pt] (3) at (2, -1.25) {};
	\coordinate         (3a) at (1.75,-1.5) {};

\node [contact, outer sep=5pt] (40) at (0, -0.5) {};
\node [contact, outer sep=5pt] (41) at (0, 0.0) {};
\node [contact, outer sep=5pt] (42) at (-1, -0.5) {};
\node [contact, outer sep=5pt] (43) at (-1, 0.0) {};



	        \node[contact, outer sep=5pt,fill=gray,color=purple] (p6) at (-6, 1) {};
	        \node[contact, outer sep=5pt,fill=gray,color=purple] (p7) at (-6, -.5) {};
	        \node[contact, outer sep=5pt,fill=gray,color=purple] (p8) at (-6, .5) {};
	        \node[contact, outer sep=5pt,fill=gray,color=purple] (p9) at (-6, 0) {};
	        \node[contact, outer sep=5pt,fill=gray,color=purple] (p10) at (-6, -1) {};

	        \node[contact, outer sep=5pt,fill=gray,color=purple] (p-2) at (5, 1) {};
	        \node[contact, outer sep=5pt,fill=gray,color=purple] (p0) at (5, .5) {};
	        \node[contact, outer sep=5pt,fill=gray,color=purple] (p1) at (5, 0) {};
	        \node[contact, outer sep=5pt,fill=gray,color=purple] (p2) at (5, -.5) {};
	        \node[contact, outer sep=5pt,fill=gray,color=purple] (p3) at (5, -1) {};

\end{pgfonlayer}
\begin{pgfonlayer}{edgelayer}

\begin{scope}[every node/.style={sloped,allow upside down}]
\draw (6.center) -- node {\midarrow} (15a.center);
\draw  (8.center) -- node {\midarrow} (15b.center);
\draw  (15.center) to (15a.center);
\draw  (15.center) to (15b.center);
\draw  (-2.center) -- node {\midarrow} (18a.center);
\draw  (18.center) to (18a.center); 
\draw  (-1.center) -- node {\midarrow} (18.center);
\draw  (9.center) -- node {\midarrow} (16a.center);
\draw  (16b.center) -- node {\midarrow} (7.center);
\draw  (10.center) to (17a.center);  
\draw  (17a.center) -- node {\midarrow}  (17.center);   
\draw  (33a.center) to (33.center); 
\draw  (33a.center) -- node {\midarrow} (2.center); 
\draw  (17.center) -- node {\midarrow} (3a.center); 
\draw  (3a.center) to (3.center);   
\draw  (28.center) -- node {\midarrow}(0.center); 
\draw (1a.center) to (1.center);    
\draw  (1a.center) -- node {\midarrow} (29.center); 
\draw  (1b.center) -- node {\midarrow} (11.center); 
\draw  (1.center) to (1b.center);   
\draw  (15.center) -- node {\midarrow} (14.center); 
\draw  (15.center) -- node {\midarrow} (19.center); 
\draw  (18.center) -- node {\midarrow} (14.center); 
\draw  (18.center) -- node {\midarrow} (19.center); 
\draw  (16a.center) to (16.center);  
\draw  (16b.center) to (16.center); 
\draw  (29.center) -- node {\midarrow} (11.center); 
\draw  (32.center) -- node {\midarrow}(17.center); 
\draw  (32.center) -- node {\midarrow} (10.center); 
\draw  (33.center) -- node {\midarrow}(17.center); 
\draw  (33.center) -- node {\midarrow} (32.center); 
\draw  (40.center) -- node {\midarrow} (41.center); 
\draw  (40.center) -- node {\midarrow} (42.center); 
\draw  (41.center) -- node {\midarrow} (43.center); 
\draw  (43.center) -- node {\midarrow} (42.center); 
\end{scope}
	       \path[color=purple, very thick, shorten >=5pt, shorten <=5pt, ->,
	>=stealth] (p6) edge (6);
	       \path[color=purple, very thick, shorten >=4pt, shorten <=5pt, ->,
	>=stealth] (p7) edge (7);
	       \path[color=purple, very thick, shorten >=5pt, shorten <=5pt, ->,
	>=stealth] (p8) edge (6);
	       \path[color=purple, very thick, shorten >=5pt, shorten <=5pt, ->,
	>=stealth] (p9) edge (9);
	       \path[color=purple, very thick, shorten >=5pt, shorten <=5pt, ->,
	>=stealth] (p10) edge (10);
	       \path[color=purple, very thick, shorten >=5pt, shorten <=5pt, ->,
	>=stealth] (p-2) edge (-2);

	       \path[color=purple, very thick, shorten >=5pt, shorten <=5pt, ->,
	>=stealth] (p0) edge (0);
	       \path[color=purple, very thick, shorten >=5pt, shorten <=5pt, ->,
	>=stealth] (p1) edge (1);
	       \path[color=purple, very thick, shorten >=5pt, shorten <=5pt, ->,
	>=stealth] (p2) edge (3);
	       \path[color=purple, very thick, shorten >=5pt, shorten <=5pt, ->,
	>=stealth] (p3) edge (3);
\end{pgfonlayer}
\end{tikzpicture}
\end{aligned}
\]
As we shall see in detail later, for the behavior of a circuit made of ideal wires, all that matters about the underlying graph is whether any given pair of nodes lie in the same connected component or not: that is, whether or not there exists a path of edges and their reverses from one node to another.   If they lie in the same component, current can flow between them; otherwise not, and this is all there is to say.  We may thus replace the graph $\Gamma$ by its set of connected components, $\pi_0(\Gamma)$.    There is map $p_\Gamma \maps N \to \pi_0(\Gamma)$ sending each node to the connected component it lies in.  We thus obtain a cospan of finite sets:
\[
  \xymatrix{
    & \pi_0(\Gamma) \\
    X \ar[ur]^{p_\Gamma i} && Y \ar[ul]_{p_\Gamma o}
  }
\]
In particular, the above example gives this cospan of finite sets:
\[
\begin{aligned}
\begin{tikzpicture}[scale=.85, circuit ee IEC]
\begin{pgfonlayer}{nodelayer}
\node [style=none] (5) at (-.5, -2.25) {$\pi_0(\Gamma)$};
\node [style=none] (4a) at (-4, -2.25) {$X$};
\node [style=none] (20a) at (3, -2.25) {$Y$};


	        \node[contact, outer sep=5pt,fill=gray,color=purple] (p6) at (-4, 1.1) {};
	        \node[contact, outer sep=5pt,fill=gray,color=purple] (p7) at (-4, -.4) {};
	        \node[contact, outer sep=5pt,fill=gray,color=purple] (p8) at (-4, .6) {};
	        \node[contact, outer sep=5pt,fill=gray,color=purple] (p9) at (-4, .1) {};
	        \node[contact, outer sep=5pt,fill=gray,color=purple] (p10) at (-4, -1) {};

	        \node[contact, outer sep=5pt,fill=gray,color=purple] (p-2) at (3, 1.1) {};
	        \node[contact, outer sep=5pt,fill=gray,color=purple] (p0) at (3, .6) {};
	        \node[contact, outer sep=5pt,fill=gray,color=purple] (p1) at (3, 0.1) {};
	        \node[contact, outer sep=5pt,fill=gray,color=purple] (p2) at (3, -.4) {};
	        \node[contact, outer sep=5pt,fill=gray,color=purple] (p3) at (3, -1) {};

\node [contact, outer sep=5pt] (m1) at (-.5, 1.5) {};
\node [contact, outer sep=5pt] (m2) at (-.5, 1) {};
\node [contact, outer sep=5pt] (m3) at (-.5, .5) {};
\node [contact, outer sep=5pt] (m4) at (-.5, 0) {};
\node [contact, outer sep=5pt] (m5) at (-.5, -.5) {};
\node [contact, outer sep=5pt] (m6) at (-.5, -1) {};

\end{pgfonlayer}
\begin{pgfonlayer}{edgelayer}
	       \path[color=purple, very thick, shorten >= 5pt, shorten <=5pt, ->,
	>=stealth] (p6) edge (m1);
	       \path[color=purple, very thick, shorten >= 5pt, shorten <=5pt, ->,
	>=stealth] (p7) edge (m3);
	       \path[color=purple, very thick, shorten >=5pt, shorten <=5pt, ->,
	>=stealth] (p8) edge (m1);
	       \path[color=purple, very thick, shorten >=5pt, shorten <=5pt, ->,
	>=stealth] (p9) edge (m3);
	       \path[color=purple, very thick, shorten >=5pt, shorten <=5pt, ->,
	>=stealth] (p10) edge (m6);

	       \path[color=purple, very thick, shorten >=5pt, shorten <=5pt, ->,
	>=stealth] (p-2) edge (m1);

	       \path[color=purple, very thick, shorten >=5pt, shorten <=5pt, ->,
	>=stealth] (p0) edge (m2);
	       \path[color=purple, very thick, shorten >=5pt, shorten <=5pt, ->,
	>=stealth] (p1) edge (m5);
	       \path[color=purple, very thick, shorten >=5pt, shorten <=5pt, ->,
	>=stealth] (p2) edge (m6);
	       \path[color=purple, very thick, shorten >=5pt, shorten <=5pt, ->,
	>=stealth] (p3) edge (m6);
\end{pgfonlayer}
\end{tikzpicture}
\end{aligned}
\]

This way of simplifying a circuit made of ideal wires defines a functor
\[   G \maps \Circ \to \Fin\Cospan  \]
where $\define{\Fin\Cospan}$ is the category where an object is a finite set and a morphism is an isomorphism classes of cospans.  We study this functor in Example \ref{ex:circ_to_fincospan}.  

We can simplify a circuit made of ideal wires even further, because connected 
components of the graph $G$ that contain no terminals---that is, no points in 
$i(X) \cup o(Y)$---can be discarded without affecting the behavior of the circuit.   
In other words, given a cospan of finite sets:
\[
  \xymatrix{
    & S \\
    X \ar[ur]^{f} && Y \ar[ul]_{g}
  }
\]
we can replace $S$ by the subset set $f(X) \cup g(Y)$.   The resulting cospan
is `jointly epic':

\begin{definition}
A cospan $X \stackrel{f}{\rightarrow} S \stackrel{g}{\leftarrow} Y$
in the category of finite sets is \define{jointly epic} if $f(X) \cup g(Y) = S$.   
\end{definition}

\noindent
When we apply this second simplification process to our example, we obtain this
jointly epic cospan:
\[
\begin{aligned}
\begin{tikzpicture}[scale=.85, circuit ee IEC]
\begin{pgfonlayer}{nodelayer}
\node [style=none] (5) at (-.5, -2.25) {$p_\Gamma(i(X)) \cup p_\Gamma(o(Y))$};
\node [style=none] (4a) at (-4, -2.25) {$X$};
\node [style=none] (20a) at (3, -2.25) {$Y$};


	   	  \node[contact, outer sep=5pt,fill=gray,color=purple] (p6) at (-4, 1.1) {};
	        \node[contact, outer sep=5pt,fill=gray,color=purple] (p7) at (-4, -.45) {};
	        \node[contact, outer sep=5pt,fill=gray,color=purple] (p8) at (-4, .6) {};
	        \node[contact, outer sep=5pt,fill=gray,color=purple] (p9) at (-4, .1) {};
	        \node[contact, outer sep=5pt,fill=gray,color=purple] (p10) at (-4, -1) {};

	        \node[contact, outer sep=5pt,fill=gray,color=purple] (p-2) at (3, 1.1) {};
	        \node[contact, outer sep=5pt,fill=gray,color=purple] (p0) at (3, .6) {};
	        \node[contact, outer sep=5pt,fill=gray,color=purple] (p1) at (3, 0.1) {};
	        \node[contact, outer sep=5pt,fill=gray,color=purple] (p2) at (3, -.45) {};
	        \node[contact, outer sep=5pt,fill=gray,color=purple] (p3) at (3, -1) {};

		\node [contact, outer sep=5pt] (m1) at (-.5, 1.6) {};
		\node [contact, outer sep=5pt] (m2) at (-.5, 1.1) {};
		\node [contact, outer sep=5pt] (m3) at (-.5, .6) {};
		\node [contact, outer sep=5pt] (m5) at (-.5, -.4) {};
		\node [contact, outer sep=5pt] (m6) at (-.5, -.9) {};

\end{pgfonlayer}
\begin{pgfonlayer}{edgelayer}
	       \path[color=purple, very thick, shorten >=5pt, shorten <=5pt, ->,
	>=stealth] (p6) edge (m1);
	       \path[color=purple, very thick, shorten >=5pt, shorten <=5pt, ->,
	>=stealth] (p7) edge (m3);
	       \path[color=purple, very thick, shorten >=5pt, shorten <=5pt, ->,
	>=stealth] (p8) edge (m1);
	       \path[color=purple, very thick, shorten >=5pt, shorten <=5pt, ->,
	>=stealth] (p9) edge (m3);
	       \path[color=purple, very thick, shorten >=5pt, shorten <=5pt, ->,
	>=stealth] (p10) edge (m6);

	       \path[color=purple, very thick, shorten >=5pt, shorten <=5pt, ->,
	>=stealth] (p-2) edge (m1);

	       \path[color=purple, very thick, shorten >=5pt, shorten <=5pt, ->,
	>=stealth] (p0) edge (m2);
	       \path[color=purple, very thick, shorten >=5pt, shorten <=5pt, ->,
	>=stealth] (p1) edge (m5);
	       \path[color=purple, very thick, shorten >=5pt, shorten <=5pt, ->,
	>=stealth] (p2) edge (m6);
	       \path[color=purple, very thick, shorten >=5pt, shorten <=5pt, ->,
	>=stealth] (p3) edge (m6);
\end{pgfonlayer}
\end{tikzpicture}
\end{aligned}
\]

A jointly epic cospan $X \stackrel{f}{\rightarrow} S \stackrel{g}{\leftarrow} Y$ determines a partition of $X+Y$, where two points $p,q \in X+Y$ are in the same block
of the partition if and only if they map to the same point of $S$ via the function
$\langle f, g \rangle \maps X+Y \to S$.   Moreover, two jointly epic cospans from $X$ to $Y$ are isomorphic, in the usual sense of isomorphism of cospans, if and only if they determine
the same partition of $X+Y$.   For example, the above jointly epic cospan gives this
partition:
\[
\begin{aligned}
\begin{tikzpicture}[scale=.85, circuit ee IEC]
\begin{pgfonlayer}{nodelayer}
\node [style=none] (4a) at (-4, -2.25) {$X$};
\node [style=none] (20a) at (3, -2.25) {$Y$};


	        \node[contact, outer sep=5pt,fill=gray,color=purple] (p6) at (-4, 1) {};
	        \node[contact, outer sep=5pt,fill=gray,color=purple] (p7) at (-4, -.5) {};
	        \node[contact, outer sep=5pt,fill=gray,color=purple] (p8) at (-4, .5) {};
	        \node[contact, outer sep=5pt,fill=gray,color=purple] (p9) at (-4, 0) {};
	        \node[contact, outer sep=5pt,fill=gray,color=purple] (p10) at (-4, -1) {};

	        \node[contact, outer sep=5pt,fill=gray,color=purple] (p-2) at (3, 1) {};
	        \node[contact, outer sep=5pt,fill=gray,color=purple] (p0) at (3, .5) {};
	        \node[contact, outer sep=5pt,fill=gray,color=purple] (p1) at (3, 0) {};
	        \node[contact, outer sep=5pt,fill=gray,color=purple] (p2) at (3, -.5) {};
	        \node[contact, outer sep=5pt,fill=gray,color=purple] (p3) at (3, -1) {};

\end{pgfonlayer}
\begin{pgfonlayer}{edgelayer}
		\draw [rounded corners=5pt, dashed] 
   (node cs:name=p6, anchor=north west) --
   (node cs:name=p8, anchor=south west) --
   (node cs:name=p-2, anchor=south east) --
   (node cs:name=p-2, anchor=north east) --
   cycle;
		\draw [rounded corners=5pt, dashed] 
   (node cs:name=p9, anchor=north west) --
   (node cs:name=p7, anchor=south west) --
   (node cs:name=p7, anchor=south east) --
   (node cs:name=p9, anchor=north east) --
   cycle;
		\draw [rounded corners=5pt, dashed] 
   (node cs:name=p10, anchor=north west) --
   (node cs:name=p10, anchor=south west) --
   (node cs:name=p3, anchor=south east) --
   (node cs:name=p2, anchor=north east) --
   cycle;
		\draw [rounded corners=5pt, dashed] 
   (node cs:name=p0, anchor=north west) --
   (node cs:name=p0, anchor=south west) --
   (node cs:name=p0, anchor=south east) --
   (node cs:name=p0, anchor=north east) --
   cycle;
		\draw [rounded corners=5pt, dashed] 
   (node cs:name=p1, anchor=north west) --
   (node cs:name=p1, anchor=south west) --
   (node cs:name=p1, anchor=south east) --
   (node cs:name=p1, anchor=north east) --
   cycle;
\end{pgfonlayer}
\end{tikzpicture}
\end{aligned}
\]
Thus, one makes the following definition:

\begin{definition}
Given sets $X$ and $Y$, a \define{corelation} from $X$ to $Y$ is a partition of $X+Y$, or equivalently, an isomorphism class of jointly epic cospans $X \stackrel{f}{\rightarrow} S \stackrel{g}{\leftarrow} Y$.
\end{definition}

The reason for the word `corelation' is that a relation from $X$ to $Y$ corresponds, in 
a similar way, to an isomorphism class of jointly monic spans $X \stackrel{f}{\leftarrow} S \stackrel{g}{\rightarrow} Y$.  Corelations have been studied by Lawvere and Rosebrugh \cite{LR}, and Ellerman \cite{El} used them in an approach
to logic and set theory, dual to the usual approach, in which propositions correspond to partitions rather than subsets.   Fong and the second author \cite{CF,Fo} have continued the study of corelations, and we summarize some of their results in Example \ref{ex:fincorel}.

To begin with, there is a category $\Fin\Corel$ where the objects are finite sets and morphisms are corelations.     We compose two corelations by treating them as
jointly epic cospans:
\[
  \xymatrix{
    & \; S \;  && \; S'  \\
 \; \;\; X \ar[ur]^{f} \;\; && \; Y \; \ar[ul]_{g} \ar[ur]^{f'} && \;\; Z, \;\; \ar[ul]_{g'}
  }
\]
forming the pushout:
\[
  \xymatrix{
    && S+_Y S' \\
    & \; S \; \ar[ur]^h && \; S' \ar[ul]_{h'} \\
 \; \;\; X \ar[ur]^{f} \;\; && \; Y \; \ar[ul]_{g} \ar[ur]^{f'} && \;\; Z, \;\; \ar[ul]_{g'}
  }
\]
and then forcing the composite cospan to be jointly epic:
\[
  \xymatrix{
    & h(S) \cup h'(S') \\
     \; X \; \ar[ur]^{hf} && \; Z.\ar[ul]_{h'g'} }
\]
Equivalently, if we treat a corelation $R \maps X \to Y$ as an equivalence relation on 
$X+Y$, we compose corelations $R \maps X \to Y$ and $S \maps Y \to Z$ by forming the 
union of relations $R \cup S$ on $X + Y + Z$, taking its transitive closure to get an equivalence relation, and then restricting this equivalence relation to $X + Z$.   For example, given this corelation from $X$ to $Y$:
\[
  \begin{tikzpicture}[circuit ee IEC]
	\begin{pgfonlayer}{nodelayer}
		\node [contact, outer sep=5pt] (0) at (-2, 1) {};
		\node [contact, outer sep=5pt] (1) at (-2, 0.5) {};
		\node [contact, outer sep=5pt] (2) at (-2, -0) {};
		\node [contact, outer sep=5pt] (3) at (-2, -0.5) {};
		\node [contact, outer sep=5pt] (4) at (-2, -1) {};
		\node [contact, outer sep=5pt] (5) at (1, 1.25) {};
		\node [contact, outer sep=5pt] (6) at (1, 0.75) {};
		\node [contact, outer sep=5pt] (7) at (1, 0.25) {};
		\node [contact, outer sep=5pt] (8) at (1, -0.25) {};
		\node [contact, outer sep=5pt] (9) at (1, -0.75) {};
		\node [contact, outer sep=5pt] (10) at (1, -1.25) {};
		\node [style=none] (11) at (-2.75, -0) {$X$};
		\node [style=none] (12) at (1.75, -0) {$Y$};
	\end{pgfonlayer}
	\begin{pgfonlayer}{edgelayer}
		\draw [rounded corners=5pt, dashed] 
   (node cs:name=0, anchor=north west) --
   (node cs:name=1, anchor=south west) --
   (node cs:name=6, anchor=south east) --
   (node cs:name=5, anchor=north east) --
   cycle;
		\draw [rounded corners=5pt, dashed] 
   (node cs:name=2, anchor=north west) --
   (node cs:name=3, anchor=south west) --
   (node cs:name=3, anchor=south east) --
   (node cs:name=2, anchor=north east) --
   cycle;
		\draw [rounded corners=5pt, dashed] 
   (node cs:name=4, anchor=north west) --
   (node cs:name=4, anchor=south west) --
   (node cs:name=10, anchor=south east) --
   (node cs:name=9, anchor=north east) --
   cycle;
   		\draw [rounded corners=5pt, dashed] 
   (node cs:name=7, anchor=north west) --
   (node cs:name=7, anchor=south west) --
   (node cs:name=7, anchor=south east) --
   (node cs:name=7, anchor=north east) --
   cycle;
   		\draw [rounded corners=5pt, dashed] 
   (node cs:name=8, anchor=north west) --
   (node cs:name=8, anchor=south west) --
   (node cs:name=8, anchor=south east) --
   (node cs:name=8, anchor=north east) --
   cycle;
	\end{pgfonlayer}
\end{tikzpicture}
\]
and this corelation from $Y$ to $Z$:
\[
\begin{tikzpicture}[circuit ee IEC]
	\begin{pgfonlayer}{nodelayer}
		\node [style=none] (0) at (-2.75, -0) {$Y$};
		\node [style=none] (1) at (1.75, 0) {$Z$};
		\node [contact, outer sep=5pt] (2) at (-2, 1.25) {};
		\node [contact, outer sep=5pt] (3) at (-2, 0.75) {};
		\node [contact, outer sep=5pt] (4) at (-2, 0.25) {};
		\node [contact, outer sep=5pt] (5) at (-2, -0.25) {};
		\node [contact, outer sep=5pt] (6) at (-2, -0.75) {};
		\node [contact, outer sep=5pt] (7) at (-2, -1.25) {};
		\node [contact, outer sep=5pt] (8) at (1, 1) {};
		\node [contact, outer sep=5pt] (9) at (1, 0.5) {};
		\node [contact, outer sep=5pt] (10) at (1, -0) {};
		\node [contact, outer sep=5pt] (11) at (1, -0.5) {};
		\node [contact, outer sep=5pt] (12) at (1, -1) {};
	\end{pgfonlayer}
		\draw [rounded corners=5pt, dashed] 
   (node cs:name=2, anchor=north west) --
   (node cs:name=3, anchor=south west) --
   (node cs:name=8, anchor=south east) --
   (node cs:name=8, anchor=north east) --
   cycle;
		\draw [rounded corners=5pt, dashed] 
   (node cs:name=4, anchor=north west) --
   (node cs:name=4, anchor=south west) --
   (node cs:name=4, anchor=south east) --
   (node cs:name=4, anchor=north east) --
   cycle;
		\draw [rounded corners=5pt, dashed] 
   (node cs:name=5, anchor=north west) --
   (node cs:name=6, anchor=south west) --
   (node cs:name=11, anchor=south east) --
   (node cs:name=10, anchor=north east) --
   cycle;
		\draw [rounded corners=5pt, dashed] 
   (node cs:name=7, anchor=north west) --
   (node cs:name=7, anchor=south west) --
   (node cs:name=12, anchor=south east) --
   (node cs:name=12, anchor=north east) --
   cycle;
		\draw [rounded corners=5pt, dashed] 
   (node cs:name=9, anchor=north west) --
   (node cs:name=9, anchor=south west) --
   (node cs:name=9, anchor=south east) --
   (node cs:name=9, anchor=north east) --
   cycle;
\end{tikzpicture}
\]
we compose them as follows:
\[
  \begin{aligned}
\begin{tikzpicture}[circuit ee IEC]
	\begin{pgfonlayer}{nodelayer}
		\node [contact, outer sep=5pt] (-2) at (1, 1.25) {};
		\node [contact, outer sep=5pt] (-1) at (1, 0.75) {};
		\node [contact, outer sep=5pt] (0) at (1, 0.25) {};
		\node [contact, outer sep=5pt] (1) at (1, -0.25) {};
		\node [contact, outer sep=5pt] (2) at (1, -0.75) {};
		\node [contact, outer sep=5pt] (3) at (1, -1.25) {};
		\node [style=none] (4) at (-2.75, -0) {$X$};
		\node [style=none] (5) at (4.75, -0) {$Z$};
		\node [contact, outer sep=5pt] (6) at (-2, 1) {};
		\node [contact, outer sep=5pt] (7) at (-2, -0.5) {};
		\node [contact, outer sep=5pt] (8) at (-2, 0.5) {};
		\node [contact, outer sep=5pt] (9) at (-2, -0) {};
		\node [contact, outer sep=5pt] (10) at (-2, -1) {};
		\node [contact, outer sep=5pt] (11) at (4, -0) {};
		\node [contact, outer sep=5pt] (12) at (4, -1) {};
		\node [contact, outer sep=5pt] (13) at (4, -0.5) {};
		\node [contact, outer sep=5pt] (14) at (4, 0.5) {};
		\node [contact, outer sep=5pt] (19) at (4, 1) {};
		\node [style=none] (20) at (1, -1.75) {$Y$};
		\node [style=none] (21) at (1, 1.75) {\phantom{$Y$}};
	\end{pgfonlayer}
	\begin{pgfonlayer}{edgelayer}
		\draw [rounded corners=5pt, dashed] 
   (node cs:name=6, anchor=north west) --
   (node cs:name=8, anchor=south west) --
   (node cs:name=-1, anchor=south east) --
   (node cs:name=-2, anchor=north east) --
   cycle;
		\draw [rounded corners=5pt, dashed] 
   (node cs:name=9, anchor=north west) --
   (node cs:name=7, anchor=south west) --
   (node cs:name=7, anchor=south east) --
   (node cs:name=9, anchor=north east) --
   cycle;
		\draw [rounded corners=5pt, dashed] 
   (node cs:name=10, anchor=north west) --
   (node cs:name=10, anchor=south west) --
   (node cs:name=3, anchor=south east) --
   (node cs:name=2, anchor=north east) --
   cycle;
		\draw [rounded corners=5pt, dashed] 
   (node cs:name=-2, anchor=north west) --
   (node cs:name=-1, anchor=south west) --
   (node cs:name=19, anchor=south east) --
   (node cs:name=19, anchor=north east) --
   cycle;
		\draw [rounded corners=5pt, dashed] 
   (node cs:name=0, anchor=north west) --
   (node cs:name=0, anchor=south west) --
   (node cs:name=0, anchor=south east) --
   (node cs:name=0, anchor=north east) --
   cycle;
		\draw [rounded corners=5pt, dashed] 
   (node cs:name=1, anchor=north west) --
   (node cs:name=1, anchor=south west) --
   (node cs:name=1, anchor=south east) --
   (node cs:name=1, anchor=north east) --
   cycle;
		\draw [rounded corners=5pt, dashed] 
   (node cs:name=1, anchor=north west) --
   (node cs:name=2, anchor=south west) --
   (node cs:name=13, anchor=south east) --
   (node cs:name=11, anchor=north east) --
   cycle;
		\draw [rounded corners=5pt, dashed] 
   (node cs:name=3, anchor=north west) --
   (node cs:name=3, anchor=south west) --
   (node cs:name=12, anchor=south east) --
   (node cs:name=12, anchor=north east) --
   cycle;
		\draw [rounded corners=5pt, dashed] 
   (node cs:name=14, anchor=north west) --
   (node cs:name=14, anchor=south west) --
   (node cs:name=14, anchor=south east) --
   (node cs:name=14, anchor=north east) --
   cycle;
	\end{pgfonlayer}
\end{tikzpicture}
\end{aligned}
\:
  =
\:
\begin{aligned}
\begin{tikzpicture}[circuit ee IEC]
	\begin{pgfonlayer}{nodelayer}
		\node [style=none] (0) at (-2.75, -0) {$X$};
		\node [style=none] (1) at (1.75, -0) {$Z$};
		\node [contact, outer sep=5pt] (2) at (-2, 1) {};
		\node [contact, outer sep=5pt] (3) at (-2, -0.5) {};
		\node [contact, outer sep=5pt] (4) at (-2, 0.5) {};
		\node [contact, outer sep=5pt] (5) at (-2, -0) {};
		\node [contact, outer sep=5pt] (6) at (-2, -1) {};
		\node [contact, outer sep=5pt] (7) at (1, -0) {};
		\node [contact, outer sep=5pt] (8) at (1, -1) {};
		\node [contact, outer sep=5pt] (9) at (1, -0.5) {};
		\node [contact, outer sep=5pt] (10) at (1, 0.5) {};
		\node [contact, outer sep=5pt] (13) at (1, 1) {};
		\node [style=none] (20) at (1, -1.75) {\phantom{$Y$}};
		\node [style=none] (21) at (1, 1.75) {\phantom{$Y$}};
	\end{pgfonlayer}
	\begin{pgfonlayer}{edgelayer}
		\draw [rounded corners=5pt, dashed] 
   (node cs:name=2, anchor=north west) --
   (node cs:name=4, anchor=south west) --
   (node cs:name=13, anchor=south east) --
   (node cs:name=13, anchor=north east) --
   cycle;
		\draw [rounded corners=5pt, dashed] 
   (node cs:name=5, anchor=north west) --
   (node cs:name=3, anchor=south west) --
   (node cs:name=3, anchor=south east) --
   (node cs:name=5, anchor=north east) --
   cycle;
		\draw [rounded corners=5pt, dashed] 
   (node cs:name=6, anchor=north west) --
   (node cs:name=6, anchor=south west) --
   (node cs:name=8, anchor=south east) --
   (node cs:name=7, anchor=north east) --
   cycle;
		\draw [rounded corners=5pt, dashed] 
   (node cs:name=10, anchor=north west) --
   (node cs:name=10, anchor=south west) --
   (node cs:name=10, anchor=south east) --
   (node cs:name=10, anchor=north east) --
   cycle;
	\end{pgfonlayer}
\end{tikzpicture}
\end{aligned}
\]

Fong  \cite[Corollary 3.18]{Fo2} showed that the process of taking a cospan of finite sets:
\[
  \xymatrix{
    & S \\
    X \ar[ur]^{f} && Y \ar[ul]_{g}
  }
\]
and making it jointly epic as follows:
\[
  \xymatrix{
    & f(X) \cup g(Y) \\
    X \ar[ur]^{f} && Y \ar[ul]_{g}
  }
\]
defines a functor
\[      H \maps \Fin\Cospan \to \Fin\Corel  .\]
We describe this functor in more detail in Example \ref{ex:fincospan_to_fincorel}
below. 

In short, we can simplify a circuit made of ideal conductive wires
in two successive stages:
\[     \Circ \stackrel{G}{\longrightarrow} \Fin\Cospan \stackrel{H}{\longrightarrow} 
\Fin\Corel.  \]
In Section \ref{sec:black-boxing_conductive} we define a `black-boxing functor' 
\[   \blacksquare \maps \Circ \to \Fin\Rel_k, \]
which maps any such circuit to its `behavior': that is, the linear relation $R\subseteq k^{2n}$ 
 that it imposes between potentials and currents at its inputs and outputs. 
 We construct this
by composing $HG \maps \Circ \to \Fin\Corel$ with a functor
\[     K \maps \Fin\Corel \to \Fin\Rel_k  .\]
This makes precise the sense in which the behavior of such a circuit depends only on
its underlying corelation.

\section{Props} 
\label{sec:props}

We now introduce the machinery of `props' in order to use generators and
relations to describe the symmetric monoidal categories we are studying.
Mac Lane \cite{Ma65} introduced these structures in 1965 to generalize Lawvere's algebraic theories \cite{Law} to contexts where the tensor product is not cartesian.   He called them `PROPs', which is an acronym for `products and permutations'.   We feel it is finally time to drop the rather ungainly capitalization here and treat props as ordinary mathematical citizens like groups and rings:

\begin{definition}
A \define{prop} is a strict symmetric monoidal category having the natural
numbers as objects, with the tensor product of objects given by addition.  We define a morphism of props to be a strict symmetric monoidal functor that is the identity on objects.  Let \define{$\PROP$} be the category of props.
\end{definition}

A prop $\T$ has, for any natural numbers $m$ and $n$, a homset 
$\T(m,n)$.  In circuit theory we take this to be the set of circuits with $m$ inputs
and $n$ outputs.   In other contexts $\T(m,n)$ can serve as a set of `operations' with 
$m$ inputs and $n$ outputs.  In either case, we often study props
by studying their algebras:
 
\begin{definition}
If $\T$ is a prop and $\C$ is a strict symmetric monoidal category, an 
\define{algebra of} $\T$ \define{in} $\C$ is a strict symmetric monoidal functor 
$F \maps \T \to \C$.   We define a morphism of algebras of $\T$ in  
$\C$ to be a monoidal natural transformation between such functors. 
\end{definition}

For example, if $\T$ is a prop for which morphisms $f \in \T(m,n)$ are circuits of some sort, `black-boxing' should be a strict symmetric monoidal functor $F \maps \T \to \C$ that describes the `behavior' of each circuit as a morphism in $\C$.  We work out many examples of this in the sections to come.

It has long been interesting to take familiar symmetric monoidal categories, treat
them as props, and study their algebras.  The most important feature of a prop is that every object is isomorphic to a tensor power of some chosen object $x$.   However, not every symmetric monoidal category of this sort is a prop, or even isomorphic to one.  There are two reasons: first, it may not be strict, and second, not every object may be \emph{equal} to some tensor power of $x$.  Luckily these two problems are `purely technical', and can be fixed as follows:

\begin{proposition} 
\label{prop:strictification_1} 
A symmetric monoidal category $\C$ is equivalent to a prop if and only if there is an object $x \in \C$ such that every object of $\C$ is isomorphic to $x^{\otimes n} = 
x \otimes (x \otimes (x \otimes \cdots ))$ for some $n \in \N$.  
\end{proposition}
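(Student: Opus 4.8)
The forward implication is essentially formal. If $F \maps \T \to \C$ is a symmetric monoidal equivalence with $\T$ a prop, set $x = F(1)$. Every object of $\T$ is a natural number $n$, and $n = 1^{\otimes n}$ since the tensor product of objects in a prop is addition; hence $F(n) \cong F(1)^{\otimes n} = x^{\otimes n}$ because $F$, being symmetric monoidal, is strong. As $F$ is essentially surjective, every object of $\C$ is isomorphic to some $F(n)$, hence to $x^{\otimes n}$. So the real content is the converse, and the plan is as follows.

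First I would reduce to the strict case. By the standard strictification theorem for symmetric monoidal categories, there is a strict monoidal category $\C'$ (with a symmetry, not necessarily trivial --- which is fine, since props have nontrivial symmetries) together with a symmetric monoidal equivalence $\Phi \maps \C \to \C'$. Transporting the hypothesis along $\Phi$ --- using that $\Phi$ is strong monoidal and essentially surjective, exactly as in the forward implication --- shows that $x' := \Phi(x)$ has the property that every object of $\C'$ is isomorphic to $x'^{\otimes n}$ for some $n \in \N$.

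Next I would build the prop directly out of $\C'$. Define $\T$ to have object set $\N$, with $\T(m,n) := \C'(x'^{\otimes m}, x'^{\otimes n})$, and composition and identities inherited from $\C'$. On objects set $m \otimes n := m+n$; on morphisms, for $f \in \T(m,n)$ and $g \in \T(p,q)$ let $f \otimes g$ be the morphism $f \otimes g$ of $\C'$, which lies in $\C'(x'^{\otimes(m+p)}, x'^{\otimes(n+q)}) = \T(m+p, n+q)$ precisely because strictness of $\C'$ gives $x'^{\otimes(m+p)} = x'^{\otimes m} \otimes x'^{\otimes p}$ and likewise for codomains. Take the symmetry $\sigma_{m,n} \in \T(m+n, n+m)$ to be the symmetry $\sigma_{x'^{\otimes m},\, x'^{\otimes n}}$ of $\C'$; again strictness makes its source and target the correct objects. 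Associators and unitors are identities because addition of natural numbers is strictly associative and unital, and all remaining axioms for a strict symmetric monoidal category are inherited from $\C'$ term by term. Thus $\T$ is a prop. To finish, let $E \maps \T \to \C'$ send $n \mapsto x'^{\otimes n}$ and act as the identity on hom-sets (legitimate since $\T(m,n)$ is literally $\C'(x'^{\otimes m}, x'^{\otimes n})$). By construction $E$ is a strict monoidal functor preserving the symmetry; it is fully faithful; and it is essentially surjective by the transported hypothesis. Since a strong (here strict) symmetric monoidal functor that is an equivalence of underlying categories is a symmetric monoidal equivalence, composing with a quasi-inverse of $\Phi$ yields $\T \simeq \C$ as symmetric monoidal categories.

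I expect the only genuine subtlety to be the reduction in the second step --- invoking strictification correctly and checking that the hypothesis transports along $\Phi$ --- together with the bookkeeping in the third step verifying that the structure inherited "on the nose" from the strict category $\C'$ really does satisfy all the prop axioms. None of this is deep, but it is where care is needed; everything else is formal diagram-chasing.
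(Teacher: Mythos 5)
Your proof is correct, but it takes a genuinely different route from the paper's. For the converse, the paper does not first strictify $\C$; instead it considers the symmetric monoidal functor $f \maps P(1) \to \C$ sending $n$ to $x^{\otimes n}$ (where $P$ is the 2-monad for strict symmetric monoidal categories), factors it as a bijective-on-objects functor $e$ followed by a fully faithful functor $j \maps \T \to \C$, and then invokes Power's coherence lemma to transport the strict $P$-algebra structure across this factorization, so that $\T$ becomes a prop and $j$ a symmetric monoidal equivalence. You instead invoke the classical strictification theorem for symmetric monoidal categories as a black box to replace $\C$ by a strict $\C'$, and then build the prop by hand as the full subcategory of $\C'$ on the objects $x'^{\otimes n}$, relabelled by $\N$; the verifications you list (that strictness of $\C'$ makes the tensor of morphisms land in the right hom-sets, that the inclusion is strict symmetric monoidal, fully faithful and essentially surjective, and that a strong monoidal equivalence of underlying categories is a monoidal equivalence) are all standard and go through. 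The trade-off is mainly where the coherence-theoretic work is done: your argument outsources it entirely to the strictification theorem, while the paper's argument derives what it needs from Power's general result about 2-monads preserving bijective-on-objects functors, which keeps everything inside the 2-monadic framework and, as the paper notes, generalizes verbatim to $C$-colored props. Your construction also generalizes to the colored case, so the difference is one of packaging rather than of reach; your version is arguably more concrete, the paper's more uniform.
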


\begin{proof} 
See Section \ref{sec:symmoncats} for a precise statement and proof.  The proof gives a recipe for actually constructing a prop equivalent to $\C$ when this is possible.
\end{proof}

\begin{proposition}
\label{prop:strictification_2}
Suppose $\T$ and $\C$ are props and $F \maps \T \to \C$ is a symmetric monoidal
functor.  Then $F$ is isomorphic to a strict symmetric monoidal functor $G \maps \T \to \C$.   If $F(1) = 1$, then $G$ is a morphism of props.
\end{proposition}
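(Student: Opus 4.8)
The plan is to \emph{rigidify} $F$: replace it, up to monoidal natural isomorphism, by a functor $G$ that agrees with $F$ modulo the canonical isomorphisms built from $F$'s monoidal structure constraints. Throughout, recall that the monoidal unit of a prop is the object $0$ and the tensor product of objects is addition, and write $\mu_{a,b}\maps F(a)\otimes F(b)\to F(a\otimes b)$ and $\epsilon\maps 0\to F(0)$ for the (invertible) structure maps of the symmetric monoidal functor $F$. The feature that makes props pleasant here is that in $\T$ every object $n$ is \emph{literally} the tensor power $1^{\otimes n}$ — in any bracketing, since $\T$ is strict — and likewise in $\C$ the object $F(1)^{\otimes n}$ is literally the natural number $n\cdot F(1)$. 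So I would set $G(n):=n\cdot F(1)$ on objects; this is automatically strict on objects, since $G(0)=0$ and $G(m+n)=G(m)+G(n)$, and if $F(1)=1$ it is the identity on objects.

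Next I would define, by induction on $n$, an isomorphism $\alpha_n\maps F(n)\to G(n)$ in $\C$, by $\alpha_0:=\epsilon^{-1}$ and, using $n+1=n\otimes 1$ in $\T$, $\alpha_{n+1}:=(\alpha_n\otimes\id_{F(1)})\circ\mu_{n,1}^{-1}$. One then defines $G$ on morphisms by conjugation: $G(f):=\alpha_n\circ F(f)\circ\alpha_m^{-1}$ for $f\maps m\to n$. It is immediate that $G$ is a functor and that the $\alpha_n$ form a natural isomorphism $\alpha\maps F\To G$. Transporting the symmetric monoidal structure of $F$ along $\alpha$ then equips $G$ with a symmetric monoidal structure making $\alpha$ a \emph{monoidal} natural isomorphism; explicitly, the resulting structure constraints of $G$ are $\tilde\mu_{m,n}=\alpha_{m+n}\circ\mu_{m,n}\circ(\alpha_m^{-1}\otimes\alpha_n^{-1})$ and $\tilde\epsilon=\alpha_0\circ\epsilon=\id_0$.

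The crux is to show $G$ is actually \emph{strict}, i.e.\ that each $\tilde\mu_{m,n}$ is an identity morphism (that $\tilde\epsilon$ is an identity is built in). I would prove $\tilde\mu_{m,n}=\id_{(m+n)F(1)}$ by induction on $n$: the base case $n=0$ is precisely the right-unit coherence axiom for the monoidal functor $F$, once one uses that the unitors of $\C$ are identities; the induction step unfolds the definition of $\alpha_{n+1}$ and invokes the associativity coherence axiom for $F$'s constraints, again using that $\T$ and $\C$ are strict so that no associators intervene. A parallel short computation using the symmetry axiom for $F$ shows that $G$ preserves the braidings on the nose. Hence $G$ is a strict symmetric monoidal functor isomorphic to $F$. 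This coherence bookkeeping is the only real obstacle — everything else is formal — and it is entirely routine Mac Lane--style coherence, harmless precisely because all the ambient associators and unitors are identities.

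Finally, the last assertion is immediate: if $F(1)=1$ then $G(n)=n\cdot 1=n$ for every $n$, so $G$ is the identity on objects; a strict symmetric monoidal functor that is the identity on objects is by definition a morphism of props.
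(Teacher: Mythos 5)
Your proof is correct, and the overall shape — produce a natural isomorphism $\alpha$ from $F$ to a functor with the right object assignment, then define $G$ on morphisms by conjugation and check the transported constraints are identities — matches the paper's. But the way you obtain the key input is genuinely different. The paper gets the strictified data abstractly: it factors through the free strict symmetric monoidal category $P(1)$ on one object and invokes the Blackwell--Kelly--Power flexibility theorem (free algebras of a 2-monad are flexible, so the pseudomorphism $Fe \maps P(1) \to \C$ is isomorphic to a strict one $h$), taking $\alpha$ to be that isomorphism. You instead build $\alpha_n$ explicitly by induction from the constraints $\mu_{n,1}$ and $\epsilon$, and verify $\tilde\mu_{m,n}=\id$ directly: the base case is the right-unit axiom for $F$ and the inductive step is the associativity hexagon, both of which collapse to usable identities precisely because $\T$ and $\C$ are strict; the braiding check then follows from the symmetry axiom plus naturality of the braiding in $\C$. (Your computation does go through: $\tilde\mu_{m,n+1} = (\alpha_{m+n}\otimes\id)\circ(\mu_{m,n}\otimes\id)\circ(\alpha_m^{-1}\otimes\alpha_n^{-1}\otimes\id) = \tilde\mu_{m,n}\otimes\id$.) What each buys: your argument is elementary and self-contained, needing only the definition of a symmetric monoidal functor; the paper's is shorter given the cited 2-monad machinery, avoids the inductive bookkeeping entirely, and makes the generalization to $C$-colored props (where $\N$ is replaced by a free commutative monoid and a single chosen generator no longer suffices for a clean induction) essentially automatic — though your argument also adapts with a little more care in choosing a well-ordering of the generators. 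The final observation, that $F(1)=1$ forces $G$ to be the identity on objects and hence a morphism of props, is the same in both.
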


\begin{proof}
See Section \ref{sec:symmoncats} for a precise statement and proof.  
\end{proof}

We can use Proposition\ \ref{prop:strictification_1} to turn some categories we have
been discussing into props:

\begin{example}
\label{ex:finset}
Consider the category of finite sets and functions, made into a symmetric monoidal category where the tensor product of sets is their coproduct, or disjoint union. By Proposition\  \ref{prop:strictification_1} this symmetric monoidal category is equivalent to a prop.  From now on, we use \define{$\Fin\Set$} to stand for this prop.  We identify  this prop with a skeleton of the category of finite sets and functions, having finite ordinals $0, 1, 2, \dots$ as objects.

It is well known that the algebras of $\Fin\Set$ are commutative monoids \cite{Pi}. 
Recall that a \define{commutative monoid} $(x,\mu,\iota)$ in a strict symmetric monoidal
category $\C$ is an object $x \in \C$ together with a \define{multiplication}
$\mu \maps x \otimes x \to x$ and \define{unit} $\iota \maps I \to x$ obeying
the associative, unit and commutative laws.  If we draw the multiplication and unit
using string diagrams:
\[
  \xymatrixrowsep{1pt}
  \xymatrix{
    \mult{.075\textwidth} & & \unit{.075\textwidth} \\
    \mu\maps x \otimes x \to x & & \iota\maps I \to x
  }
\]
these laws are as follows:
\[
  \xymatrixrowsep{1pt}
  \xymatrixcolsep{25pt}
  \xymatrix{
    \assocl{.1\textwidth} = \assocr{.1\textwidth} & \unitl{.1\textwidth} =
    \idone{.1\textwidth} & \commute{.1\textwidth} = \mult{.07\textwidth} \\
    \textrm{(associativity)} & \textrm{(unitality)} & \textrm{(commutativity)}
  }
\]
where $\swap{1em}$ is the braiding on $x \otimes x$. In addition to the
`upper' or `right' unit law shown above, the mirror image `lower' or `left' 
unit law also holds, due to commutativity and the naturality of the braiding.  

To get a sense for why the algebras of $\Fin\Set$ are commutative monoids, 
write $m \maps 2 \to 1$ and $i\maps 0 \to 1$ for the unique
functions of their type. Then given a strict symmetric monoidal functor
$F\maps \Fin\Set \to \C$, the object $F(1)$ becomes a commutative monoid in $\C$ with 
multiplication $F(m) \maps F(1) \otimes F(1) \to F(1)$ and unit $F(i)$.   The associative, 
unit and commutative laws are easy to check.   Conversely---and this requires 
more work---any commutative monoid in $\C$ arises in this way from a unique 
choice of $F$.  

Similarly, morphisms between algebras of $\Fin\Set$ in $\C$ correspond to morphisms 
of commutative monoids in $\C$. We thus say that $\Fin\Set$ is \define{the prop for 
commutative monoids}.
\end{example}

\begin{example}
\label{ex:fincospan}
Consider the category of finite sets and isomorphism classes of cospans, made into a symmetric monoidal category where the tensor product is given by disjoint union.   By Proposition\ \ref{prop:strictification_1} this symmetric monoidal category is equivalent to a prop.    We henceforth call this prop \define{$\Fin\Cospan$}.   As with $\Fin\Set$, we can identify the objects of $\Fin\Cospan$ with finite ordinals.

Lack \cite{La} has shown that $\Fin\Cospan$ is the prop for special commutative
Frobenius monoids.   To understand this, recall that a \define{cocommutative 
comonoid} $(x,\delta,\epsilon)$ in $\C$ is an object $x \in \C$ together with morphisms
\[
  \xymatrixrowsep{1pt}
  \xymatrix{
    \comult{.075\textwidth} & & \counit{.075\textwidth} \\
    \delta\maps x \to x \otimes x & & \epsilon\maps x \to I
  }
\]
obeying these equations:
\[
  \xymatrixrowsep{1pt}
  \xymatrixcolsep{25pt}
  \xymatrix{
    \coassocl{.1\textwidth} = \coassocr{.1\textwidth} & \counitl{.1\textwidth} =
    \idone{.1\textwidth} & \cocommute{.1\textwidth} = \comult{.07\textwidth} \\
    \textrm{(coassociativity)} & \textrm{(counitality)} &
    \textrm{(cocommutativity)}
  }
\]
A \define{commutative Frobenius monoid} in $\C$ is a
commutative monoid $(x,\mu,\iota)$ and a cocommutative comonoid 
$(x,\delta,\epsilon)$ in $\C$ which together obey the \define{Frobenius law}:
  \[
  \xymatrix{
    \frobs{.1\textwidth} = \frobx{.1\textwidth} = \frobz{.1\textwidth} \\
  }
  \]
In fact if any two of these expressions are equal so are all three.  Furthermore, a 
monoid and comonoid obeying the Frobenius  law is commutative if and only if it is cocommutative.   A \define{morphism} of commutative Frobenius monoids in $\C$ is
a morphism between the underlying objects that preserves the multiplication, unit,
comultiplication and counit.  In fact, any morphism of commutative Frobenius 
monoids is an isomorphism.

A commutative Frobenius monoid is \define{special} if comultiplication followed 
by multiplication is the identity:
\[
  \xymatrix{
    \spec{.1\textwidth} =  \idone{.1\textwidth} 
  }
\]
To get a sense for why the algebras of $\Fin\Cospan$ are special commutative 
Frobenius monoids, note that any function $f \maps X \to Y$ between finite sets
gives rise to a cospan $X \stackrel{f}{\rightarrow} Y \stackrel{1}{\leftarrow} Y$
but also a cospan $Y \stackrel{1}{\rightarrow} Y \stackrel{f}{\leftarrow} X$.
The aforementioned functions $m \maps 2 \to 1$ and $i \maps 0 \to 1$ thus
give cospans $\mu \maps 2 \to 1$, $\iota \maps 0 \to 1$ but also cospans
$\delta \maps 1 \to 2$, $\epsilon \maps 1 \to 0$.  These four cospans make the 
object $1 \in \Fin\Cospan$ into a special commutative Frobenius monoid.  

This much is easy; Lack's accomplishment was to find an elegant
proof that the category of algebras of $\Fin\Cospan$ in any strict symmetric monoidal
category $\C$ is equivalent to the category of special commutative Frobenius
monoids in $\C$.  We thus say that $\Fin\Cospan$ is the \define{prop for 
special commutative Frobenius monoids}.
\end{example}

\begin{example}
\label{ex:fincorel}
Consider the category of finite sets and corelations, again made into a symmetric monoidal category where the tensor product is given by disjoint union.  
By Proposition\ \ref{prop:strictification_1}, this symmetric monoidal category is 
equivalent to a prop.   We call this prop \define{$\Fin\Corel$}, and we identify this prop with a skeleton of the category of finite sets and corelations having finite ordinals as objects.

A special commutative Frobenius monoid is \define{extraspecial} if the unit followed by the counit is the identity:
\[
  \xymatrix{
\extral{.1\textwidth} = 
  }
\]
where the blank at right denotes the identity on the unit object for the tensor 
product.  The Frobenius law and the special law go back at least to Carboni and 
Walters \cite{CW}, but this `extra' law is newer, appearing under this name in the 
work of Baez and Erbele \cite{BE}, as the `bone law' in \cite{BSZ,FRS}, and 
as the `irredundancy law' in \cite{Za}.  In terms of circuits, it says that a loose wire
not connected to anything else can be discarded without affecting the behavior of
the circuit.

Fong and the second author \cite{CF} showed that the category of algebras of $\Fin\Corel$  in a strict symmetric monoidal category $\C$ is equivalent to the category of extraspecial commutative Frobenius monoids in $\C$.  We thus say that $\Fin\Corel$ is the \define{prop for extraspecial commutative Frobenius monoids}.
\end{example}
 
We conclude with two examples that are in some sense `dual' to the previous two.  These are important in their own right, but also useful in understanding the prop of linear relations, discussed in Section \ref{sec:linear_relations}.
 
\begin{example}
\label{ex:finspan}
Consider the category of finite sets and isomorphism classes of spans, made into a symmetric monoidal category where the tensor product is given by disjoint union.   By Proposition\ \ref{prop:strictification_1} this symmetric monoidal category is equivalent to a prop.    We call this prop \define{$\Fin\Span$}.   

Lack \cite{La} has shown that $\Fin\Span$ is the prop for bicommutative
bimonoids.   Recall that a \break \define{bimonoid} is a monoid and comonoid where all the monoid operations are comonoid homomorphisms---or equivalently, all the comonoid operations are monoid homomorphisms.  
A bimonoid is \define{bicommutative} if its underlying monoid is commutative and its underlying comonoid is cocommutative. 
\end{example}

\begin{example}
\label{ex:finrel}
Consider the category of finite sets and relations, made into a symmetric monoidal category where the tensor product is given by disjoint union.   Yet again this is equivalent to a prop, which we call \define{$\Fin\Rel$}.    Fong and the second author \cite{CF} have shown that $\Fin\Rel$ is the prop for special bicommutative bimonoids.  Here a bimonoid is \define{special} if its comultiplication followed by its multiplication is the identity.  The `extra' law, saying that the unit followed by the counit is the identity, holds in any bimonoid.
\end{example}

In short, we have this picture \cite{CF}:
\[
\begin{tabular}{c|c}
  \textbf{spans} & \textbf{cospans} \\
  extra bicommutative bimonoids & special bicommutative Frobenius monoids \\ \hline
  \textbf{relations} & \textbf{corelations} \\
  extraspecial bicommutative bimonoids & extraspecial bicommutative Frobenius monoids \\
\end{tabular}
\]

\noindent
Here we are making the pattern clearer by speaking of `extra' and `extraspecial' bicommutative bimonoids, even though the `extra' law holds for any bimonoid.  We also mention `bicommutative' Frobenius monoids, even though any commutative Frobenius monoid is automatically bicommutative.

\section{Presenting props}
\label{sec:presenting_props}

Just as we can present groups using generators and relations, we can do the same for props.  Such presentations play a key role in our work.  We can handle them using the tools of universal algebra, which in its modern form involves monads arising 
from algebraic theories.   Just as we can talk about the free group on a set, we
can talk about the free prop on a `signature'.  The underlying signature of a prop $\T$ is roughly 
the  collection of all its homsets $\T(m,n)$ where $m,n \in \N$.  It is good to
organize this collection into a functor from $\N \times \N$ to $\Set$, where we 
consider $\N \times \N$ as a discrete category.   Thus:

\begin{definition} We define the category of \define{signatures} to be the functor category $\Set^{\N \times \N}$.  
\end{definition}

A signature $\Sigma$ has \define{elements}, which are the elements of the sets
$\Sigma(m,n)$ for $m,n\in \N$.   Given an element $f \in \Sigma(m,n)$ we write 
$f \maps m \to n$.

\begin{proposition}
\label{prop:monadic}
There is a forgetful functor
\[           U \maps \PROP \to \Set^{\N \times \N}  \]
sending any prop to its underlying signature and any morphism of props to its
underlying morphism of signatures.  This functor is monadic: that is, it has a left
adjoint 
\[          F \maps \Set^{\N \times \N} \to \PROP  \]
and the category of algebras of the monad $UF$ is equivalent, via the canonical
comparison functor, to the category $\PROP$.
\end{proposition}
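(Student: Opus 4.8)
The plan is to prove this by Beck's monadicity theorem. Recall that $U \maps \PROP \to \Set^{\N \times \N}$ is monadic precisely when it admits a left adjoint $F$, reflects isomorphisms, and $\PROP$ has — and $U$ preserves — coequalizers of every parallel pair whose image under $U$ underlies a split coequalizer. Conceptually, the slickest explanation of the result is that a prop is exactly an algebra for an $(\N\times\N)$-sorted finitary algebraic theory: the sorts are the pairs $(m,n)$, the operations are the composition maps $\circ_{m,n,p}$, the tensor product $\tensor$, the identities $\id_n$ and the symmetries $\sigma_{m,n}$, and the defining equations are the axioms of a strict symmetric monoidal category together with naturality of the symmetry; the category of algebras of any such theory is automatically monadic over the presheaf category on its set of sorts. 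But I will describe the hands-on verification of Beck's conditions, since that is what the appendix carries out in detail following Trimble.

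First I would construct the left adjoint $F \maps \Set^{\N \times \N} \to \PROP$. Explicitly, $F\Sigma$ is the \emph{free prop on the signature $\Sigma$}: a morphism $m \to n$ is an equivalence class of formal string diagrams built by composing and tensoring the generators $f \in \Sigma$, the symmetries $\sigma_{p,q}$, and the identities $\id_p$, where two diagrams are identified when the strict symmetric monoidal axioms force them to be equal. One checks directly that this is a prop, and that postcomposition with the generator-insertion $\Sigma \to UF\Sigma$ induces a natural bijection $\PROP(F\Sigma,\T) \cong \Set^{\N\times\N}(\Sigma, U\T)$. (Alternatively one can sidestep the explicit description: $\PROP$ is locally finitely presentable and $U$ preserves all limits and filtered colimits, both computed homset-by-homset, so a left adjoint exists by the adjoint functor theorem — though the explicit $F$ is wanted anyway for presenting props by generators and relations.)

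Next I would check the remaining two hypotheses. That $U$ reflects isomorphisms is immediate: a morphism of props that is a bijection on every homset has an inverse defined homset-by-homset, and this inverse is automatically strict symmetric monoidal and the identity on objects, hence a morphism of props. The substantive step is the coequalizer condition. Given a parallel pair $f,g \maps \T \to \T'$ in $\PROP$ together with a split coequalizer $q \maps U\T' \to Q$ of $Uf, Ug$ in $\Set^{\N \times \N}$ with its section and the other splitting map, I would transport the prop structure of $\T'$ along the splitting to $Q$: the section lets one define composition, tensor, identities and symmetries on $Q$ as images of the corresponding operations of $\T'$, the split-coequalizer equations guarantee these are well defined, and each strict-symmetric-monoidal axiom on $Q$ follows from the corresponding axiom on $\T'$ by a diagram chase using the splitting. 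This makes $Q$ a prop, makes $q$ a morphism of props, exhibits $q$ as the coequalizer of $f,g$ in $\PROP$, and shows $U$ carries it to the chosen split coequalizer. Beck's theorem then yields that $U$ is monadic, with $\PROP$ equivalent to $\mathrm{Alg}(UF)$ via the comparison functor.

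The main obstacle is this last step: one must verify that \emph{all} the defining axioms — associativity and the two unit laws for $\circ$, the interchange law relating $\circ$ and $\tensor$, strict associativity and unitality of $\tensor$, and the three families of symmetry axioms (naturality, $\sigma_{m,n}\circ\sigma_{n,m} = \id_{m+n}$, and the hexagon) — descend along the splitting, and that the transported prop structure on $Q$ is the unique one making $q$ a morphism of props. None of these checks is deep, but there are many of them and the bookkeeping with the $(\N\times\N)$-grading is fiddly; this is precisely the content of the appendix, worked out with Todd Trimble's help.
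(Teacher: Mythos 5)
Your proof is correct in outline, but it takes a genuinely different route from the paper's. You verify Beck's monadicity criteria for $U \maps \PROP \to \Set^{\N\times\N}$ directly: construct the free prop on a signature by hand, check that $U$ reflects isomorphisms, and check that coequalizers of $U$-split pairs are created by transporting the prop structure along the splitting. The paper never applies Beck's theorem to $\PROP$ itself: it exhibits $\PROP$ as the category of models of an explicit $(\N\times\N)$-typed Lawvere theory $\Theta_{\PROP}$ --- presented by a sketch whose generating operations are $\iota_m$, $\circ_{\ell,m,n}$, $\otimes_{m,n,m',n'}$, $b_{m,m'}$, with commutative diagrams encoding the prop axioms --- and then cites the general theorem (Ad\'amek--Rosick\'y--Vitale, Trimble, Nishizawa--Power) that $\Mod(\Theta) \to \Set^T$ is monadic for any $T$-typed Lawvere theory. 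So the paper's labor is entirely in the syntactic identification $\PROP \simeq \Mod(\Theta_{\PROP})$, with the monadicity outsourced to a citable result; your labor is in the explicit construction of $F$ and the split-coequalizer bookkeeping. The trade-off: your argument is self-contained and avoids the sketch formalism, but the Lawvere-theoretic packaging is reused immediately afterward --- Corollary 5.3 gets cocompleteness of $\PROP$ from Trimble's result on models of multisorted theories, and the appendix's corollaries on adding generators and equations are phrased for such models --- so it earns its keep beyond this one proposition. Two small cautions: your parenthetical appeal to local finite presentability of $\PROP$ to produce $F$ via the adjoint functor theorem is most easily justified by precisely the algebraic-theory identification you are bypassing, so as stated it is somewhat circular (your explicit free construction is the right fallback); and the appendix does not in fact ``carry out the hands-on Beck verification following Trimble'' --- the detail there is the sketch for $\Theta_{\PROP}$, not the split-coequalizer argument.
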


\begin{proof} This follows from the fact that props are algebras of a multi-sorted or
`typed' Lawvere theory, together with a generalization of Lawvere's fundamental
result \cite{Law} to this context.  We prove this in Appendix \ref{sec:appendix}.  \end{proof}

For any signature $\Sigma$, we call $F\Sigma$ the \define{free prop} on $\Sigma$.   
The first benefit of the previous theorem is that it lets us describe any prop using a
presentation.  In other words, any prop can be obtained from a free prop by taking
a coequalizer:

\begin{corollary}
\label{cor:presentation}
The category $\PROP$ is cocomplete, and any prop $\T$ is the coequalizer of some diagram
\[
    \xymatrix{
      F(E) \ar@<-.5ex>[r]_{\rho} \ar@<.5ex>[r]^{\lambda} & F(\Sigma).
    }
\]
\end{corollary}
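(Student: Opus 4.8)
The plan is to derive both assertions formally from Proposition~\ref{prop:monadic}, which presents $U \maps \PROP \to \Set^{\N \times \N}$ as monadic with left adjoint $F$. Write $\T_0 = UF$ for the induced monad on $\Set^{\N \times \N}$, and let $\eta$ and $\epsilon$ denote the unit and counit of the adjunction $F \dashv U$.

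\textbf{Cocompleteness.} First I would observe that $\Set^{\N \times \N}$ is a category of presheaves on the small discrete category $\N \times \N$, hence locally finitely presentable and in particular cocomplete. Since props are the algebras of a multi-sorted Lawvere theory --- this is exactly how Proposition~\ref{prop:monadic} is established --- the monad $\T_0$ is finitary. I would then invoke the standard result that the Eilenberg--Moore category of a finitary (indeed, any accessible) monad on a locally presentable category is again locally presentable, hence cocomplete (see Ad\'amek and Rosick\'y). Monadicity identifies this Eilenberg--Moore category with $\PROP$, so $\PROP$ is cocomplete. One could instead argue by hand: by Beck's theorem $U$ creates coequalizers of $U$-split pairs, reflexive coequalizers can be built the same way, coproducts of props can be assembled from generators and relations, and Linton's criterion then gives all colimits; but the locally presentable route is the shortest.

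\textbf{The presentation.} For the second claim I would use the canonical (``bar'') presentation of an algebra over a monad. Given a prop $\T$, set $\Sigma := U\T$, its underlying signature, and $E := UFU\T = UF\Sigma$. Then $F(\Sigma) = FU\T$ and $F(E) = FUF\Sigma$, and I would take
\[
  \lambda := \epsilon_{F\Sigma} \maps FUF\Sigma \to F\Sigma ,
  \qquad
  \rho := F\bigl(U\epsilon_{\T}\bigr) \maps FUFU\T \to FU\T .
\]
This is a reflexive pair: by the triangle identities, $F(\eta_{U\T}) \maps F\Sigma \to F(E)$ is a common section, so $\lambda \circ F(\eta_{U\T}) = \id = \rho \circ F(\eta_{U\T})$. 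The counit $\epsilon_{\T} \maps F(\Sigma) \to \T$ coequalizes $\lambda$ and $\rho$, and the content of monadicity is that it does so universally: applying $U$, the map $U\epsilon_{\T}$ is a split coequalizer of $U\lambda$ and $U\rho$ (the splitting supplied by units of the adjunction), so by Beck's monadicity theorem $U$ reflects this coequalizer, exhibiting $\T$ as the coequalizer of $\lambda, \rho \maps F(E) \rightrightarrows F(\Sigma)$ in $\PROP$.

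\textbf{Main obstacle.} There is no deep obstacle here; the corollary is a purely formal consequence of Proposition~\ref{prop:monadic}. The only points that need care are (i) confirming that $\T_0$ is finitary (equivalently accessible), so that the cocompleteness theorem for categories of algebras applies, and (ii) writing down the canonical presentation diagram correctly and recalling --- rather than reproving --- why monadicity upgrades the cocone $\epsilon_{\T}$ to a genuine coequalizer, which is precisely the reflection-of-split-coequalizers half of Beck's theorem.
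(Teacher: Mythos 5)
Your proposal is correct and takes essentially the same approach as the paper: the presentation of $\T$ is exactly the canonical (Barr--Wells) presentation the paper uses, with $\Sigma = U\T$, $E = UFU\T$, and the parallel pair $\epsilon_{FU\T}$, $FU\epsilon_{\T}$ coequalized by $\epsilon_{\T}$ (you have merely swapped the names $\lambda$ and $\rho$, which is immaterial). The only divergence is in how cocompleteness is justified --- the paper cites Trimble's result that algebras of a multi-sorted Lawvere theory in a cocomplete category with finite products distributing over colimits form a cocomplete category, whereas you invoke local finite presentability of $\Set^{\N \times \N}$ together with accessibility of the finitary monad $UF$ --- but both are standard citations resting on the same algebraic character of $\PROP$, so the two arguments are interchangeable.
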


\begin{proof}
Cocompleteness follows from a result of Trimble \cite[Prop.\ 3.1]{Tr}: the
category of algebras of a multi-sorted Lawvere theory in a category $C$ is cocomplete if 
$C$ is cocomplete and has finite products, with finite products distributing over colimits.  The 
latter fact holds simply because $\PROP$ is equivalent to the category of
algebras of a monad: as noted by Barr and Wells 
\cite[Sec.\ 3.2, Prop.\ 4]{BW}, we can take $\Sigma = U(\T)$
and $E = UFU(\T)$, with $\lambda = FU\epsilon_{\T}$ and $\rho = \epsilon_{FU(\T)}$,
where $\epsilon \maps FU \To 1$ is the counit of the adjunction in Proposition
\ref{prop:monadic}.  \end{proof}

Here elements of the signature $\Sigma$ serve as generators for $\T$, while 
elements of $E$ give relations---though as we will often be discussing 
relations of another sort, we prefer to call elements of $E$ `equations'.
The idea is that given $e \in E(m,n)$, the morphisms $\lambda(e)$ and $\rho(e)$ 
in the free prop on $\Sigma$ are set equal to each other in $T$.  
To illustrate this idea, we give presentations for the props $\Fin\Set$, 
$\Fin\Cospan$ and $\Fin\Corel$:

\begin{example} 
To present $\Fin\Set$ we can take $\Sigma_0$ to be the signature with
$\mu \maps 2 \to 1$ and $\iota \maps 0 \to 1$ as its only elements,
and let the equations $\lambda, \rho \maps F(E_0) \to F(\Sigma)$ be those governing a commutative monoid with multiplication $\mu$ and unit $\iota$, namely the associative law:
\[        \mu (\mu \otimes 1) = \mu (1 \otimes \mu), \]
the left and right unit laws:
\[        \mu (\iota \otimes 1) = 1  = \mu (1 \otimes \iota), \]
and the commutative law:
\[        \mu = \mu B ,\]
where $1$ above denotes the identity morphism on the object $1 \in F(\Sigma)$, while $B$ is the braiding on two copies of this object.    We obtain a coequalizer diagram
\[
    \xymatrix{
      F(E_0) \ar@<-.5ex>[r]_{\rho} \ar@<.5ex>[r]^{\lambda} & F(\Sigma_0) \ar[r] & \Fin\Set.
    }
\]
\end{example}

\begin{example}
\label{ex:fincospan_presentation}
To present $\Fin\Cospan$ we can take $\Sigma$ to be the signature with elements $\mu \maps 2\to 1, \iota \maps 0 \to 1, \delta \maps 1 \to 2, \epsilon \maps 1\to 0,$ and let the equations be those governing a special commutative Frobenius monoid with multiplication $\mu$, unit $\iota$, comultiplication $\delta$ and counit $\epsilon$.  We obtain a coequalizer diagram
\[
    \xymatrix{
      F(E) \ar@<-.5ex>[r]_{\rho} \ar@<.5ex>[r]^{\lambda} & F(\Sigma) \ar[r] & \Fin\Cospan.
    }
\]
\end{example}

\begin{example}
\label{ex:fincospan_to_fincorel}
To present $\Fin\Corel$ we can use the same signature $\Sigma$ as for $\Fin\Cospan$, but the equations $E'$ include one additional equation, the so-called \define{extra} law:
\[    \epsilon \iota = 1  \]
in the definition of extraspecial commutative Frobenius monoids. 
Fong and the second author \cite{CF} show we obtain a coequalizer diagram
\[
    \xymatrix{
      F(E')\ar@<-.5ex>[r]_{\rho'} \ar@<.5ex>[r]^{\lambda'} & F(\Sigma) \ar[r] & \Fin\Corel.
    }
\]
They also show that the inclusion $i \maps E \to E'$ gives rise to a diagram of this
form:
\[
    \xymatrix{
      F(E)\ar@<-.5ex>[r]_{\rho} \ar[d]_{Fi} \ar@<.5ex>[r]^{\lambda} & F(\Sigma) \ar[d]^{F1} \ar[r] & \Fin\Cospan \\
F(E')\ar@<-.5ex>[r]_{\rho'} \ar@<.5ex>[r]^{\lambda'} & F(\Sigma)
 \ar[r] & \Fin\Corel
    }
\]
where the two squares at left, one involving $\lambda$ and $\lambda'$ and the other involving $\rho$ and $\rho'$, each commute.  Thus, by the universal property of the coequalizer, we obtain a morphism $H \maps \Fin\Cospan \to \Fin\Corel$.  This expresses $\Fin\Corel$ as a `quotient' of $\Fin\Cospan$.
\end{example}

The morphism of props $H \maps \Fin\Cospan \to \Fin\Corel$ is, in fact, unique:

\begin{proposition}
\label{prop:fincospan_to_fincorel}
There exists a unique morphism of props $H \maps \Fin\Cospan \to \Fin\Corel$.
\end{proposition}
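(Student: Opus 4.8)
The plan is to reduce the claim to a finite computation with partitions of small sets, exploiting the description of $\Fin\Cospan$ as the prop for special commutative Frobenius monoids. Existence is already available: Example~\ref{ex:fincospan_to_fincorel} produces such an $H$ from the universal property of the coequalizer presenting $\Fin\Cospan$, and equivalently the canonical (extra)special commutative Frobenius monoid on the object $1 \in \Fin\Corel$ supplies one. So the substance of the proposition is uniqueness.

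For uniqueness I would invoke Lack's theorem (Example~\ref{ex:fincospan}): a morphism of props $H \maps \Fin\Cospan \to \Fin\Corel$ is precisely a special commutative Frobenius monoid in $\Fin\Corel$ whose underlying object is $1$. A morphism of props is a strict symmetric monoidal functor that is the identity on objects; for a strict symmetric monoidal functor out of $\Fin\Cospan$ the image of every object is determined by the image of $1$, so ``identity on objects'' amounts exactly to sending $1$ to $1$. Hence it suffices to show that $1 \in \Fin\Corel$ admits a \emph{unique} special commutative Frobenius monoid structure $(\mu, \iota, \delta, \epsilon)$. Here I use that $\Fin\Corel(m,n)$ is the set of partitions of an $(m+n)$-element set, and that corelations compose by taking the union of the two partitions, forming its transitive closure, and restricting. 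The counit $\epsilon \maps 1 \to 0$ and the unit $\iota \maps 0 \to 1$ are forced immediately, since $\Fin\Corel(1,0)$ and $\Fin\Corel(0,1)$ are singletons. The homset $\Fin\Corel(2,1)$ has only five elements, and a direct check shows that the left and right unit laws $\mu(\iota \otimes 1) = 1 = \mu(1 \otimes \iota)$ leave only the corelation $2 \to 1$ given by the one-block partition; dually, counitality $(\epsilon \otimes 1)\delta = 1 = (1 \otimes \epsilon)\delta$ forces $\delta \maps 1 \to 2$ to be the one-block partition. So the structure is unique, and therefore $H$ is unique.

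The only slightly delicate point is the bookkeeping in the identification ``morphism of props out of $\Fin\Cospan$ $=$ special commutative Frobenius monoid on $1$'', which relies on strictness together with the fact that objects of a prop are genuine tensor powers of the generating object; the remaining verifications are genuinely finite, and in fact one never needs associativity, commutativity, the Frobenius law, or the special law — the unit and counit laws alone already pin down all four generators. This is the main obstacle, and it is a mild one.
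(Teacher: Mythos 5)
Your proposal is correct, and its overall strategy coincides with the paper's: reduce uniqueness to showing that the object $1 \in \Fin\Corel$ carries a unique special commutative Frobenius monoid structure, note that $\iota$ and $\epsilon$ are forced because $\Fin\Corel(0,1)$ and $\Fin\Corel(1,0)$ are singletons, and then eliminate candidates among the five corelations $2 \to 1$. The one substantive difference is \emph{which} axioms you use for the elimination. The paper first applies commutativity (cutting five candidates to three), then associativity (cutting to two), and finally needs the special law $\mu\delta = 1$ to rule out the discrete partition $\{\{a\},\{b\},\{c\}\}$ and force the one-block partition. You instead claim the two unit laws alone suffice, and this checks out: writing $2+1 = \{a,b\}+\{c\}$ and computing composites by union-then-transitive-closure-then-restriction, $\mu(\iota \otimes 1)$ is the identity corelation only for $\{\{a,b,c\}\}$ and $\{\{a\},\{b,c\}\}$, while $\mu(1 \otimes \iota)$ is the identity only for $\{\{a,b,c\}\}$ and $\{\{a,c\},\{b\}\}$; their intersection is the one-block partition, and dually for $\delta$. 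So your argument is marginally more economical (it never invokes associativity, commutativity, the Frobenius law, or the special law), at the cost of two slightly longer composite computations rather than the paper's quick symmetry observation; both are valid finite checks. Your remark about the bookkeeping identification between prop morphisms out of $\Fin\Cospan$ and special commutative Frobenius monoids on $1$ is also handled correctly: strictness plus the fact that every object of a prop is a genuine tensor power of $1$ makes ``identity on objects'' equivalent to ``sends $1$ to $1$''.
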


\begin{proof}
It suffices to show that $H$ is uniquely determined on the generators $\mu , \iota, \delta, \epsilon$.   By Example \ref{ex:fincospan_presentation}, 
$H(\mu) \maps 2 \to 1, H(\iota) \maps 0 \to 1, H(\delta) \maps 1 \to 2$ and $H(\epsilon) \maps 1 \to 0$ must make $1 \in \Fin\Corel$ into a special commutative
Frobenius monoid.   

There is a unique corelation from $0$ to $1$, so $H(\iota)$ is uniquely determined.
Similarly, $H(\epsilon)$ is uniquely determined.  Each partition of a 3-element
set gives a possible choice for $H(\mu)$.   If we write $H(\mu)$ as 
a partition of the set $2 + 1$ with $2 = \{a,b\}$ and $1 = \{c\}$ these choices are:
\[      \{\{a,b,c\}\},  \;\;
\{\{a\}, \{b\}, \{c\}\} , \;\;
\{\{a,b\}, \{c\} \},  \;\;
\{\{a\}, \{b,c\} \},  \;\;
\{\{a,c\}, \{b\}\} .
\]
The commutative law rules out those that are not invariant under switching
$a$ and $b$, leaving us with these:
\[      \{\{a,b,c\}\},  \;\;
\{\{a\}, \{b\}, \{c\}\}, \;\;
\{\{a,b\}, \{c\} \} .
\]
The associative law rules out the last of these.  Either of the remaining two
makes $1 \in \Fin\Corel$ into a commutative monoid.   Dually, there are two
choices for $H(\delta)$ making $1$ into a commutative comonoid.  However, 
the `special' law demands that $H(\mu) H(\delta)$ be the identity corelation.
This forces $H(\mu)$ to be the partition with just one block, namely $\{\{a,b,c\}\}$,
and similarly for $H(\delta)$.  
\end{proof}

\section{The prop of linear relations}
\label{sec:linear_relations}

Since the simplest circuits impose \emph{linear} relations between potentials and currents at their terminals, the study of circuits forces us to think carefully about linear relations.  As we shall see, for any field $k$ there is a prop $\Fin\Rel_k$ where a morphism $f \maps m \to n$ is a linear relation from $k^m$ to $k^n$.  A presentation for this prop has been worked out by Erbele and the first author \cite{BE,E} and independently by Bonchi, Soboci\'nski and Zanasi \cite{BSZ,BSZ2,Za}.  Since we need some facts about this presentation to describe the black-boxing of circuits, we recall it here.

For any field $k$, there is a category where an object is a finite-dimensional vector space over $k$, while a morphism from $U$ to $V$ is a \define{linear relation}, meaning a linear subspace $L \subseteq U \oplus V$.    We write a linear relation
from $U$ to $V$ as $L \maps U \asrelto V$ to distinguish it from a linear map.
Since the direct sum $U \oplus V$ is also the cartesian product of $U$ and $V$, a linear relation is a relation in the usual sense, and we can compose linear relations $L \maps U \asrelto V$ and $L' \maps V \asrelto W$ in the usual way:
\[         L'L = \{(u,w) \colon \; \exists\; v \in V \;\; (u,v) \in L \textrm{ and } 
(v,w) \in L'\} \]
the result being a linear relation $L'L \maps U \asrelto W$.  Given linear relations 
$L \maps U \asrelto V$ and $L' \maps U' \asrelto V'$, the direct sum of subspaces gives a linear relation $L \oplus L' \maps U \oplus U' \asrelto V \oplus V'$, and this gives our category a symmetric monoidal structure.  By Proposition \ref{prop:strictification_1}, this symmetric monoidal category is equivalent to a prop.  Concretely, we may describe this prop as follows:

\begin{definition}
Let \define{$\Fin\Rel_k$} be the prop where a morphism $f \maps m \to n$ is a linear
relation from $k^m$ to $k^n$, composition is the usual composition of relations, and the symmetric monoidal structure is given by direct sum.
\end{definition}

To give a presentation of $\Fin\Rel_k$, we use a simple but nice fact: the object 
$1 \in \Fin\Rel_k$, or in less pedantic terms the 1-dimensional vector space $k$, is an extraspecial Frobenius monoid in two fundamentally different ways.   To understand these, first note that for 
any linear relation $L \maps U \to V$ there is a linear relation 
$L^\dagger \maps V \to U$ given by
\[     L^\dagger = \{(v,u) : \; (u,v) \in L  \} .\]
This makes $\Fin\Rel_k$ into a dagger-compact category \cite{AC,BE,Se}.  Also recall that a 
linear map is a special case of a linear relation.

The first way of making $k$ into an extraspecial commutative Frobenius monoid in $\Fin\Rel_k$ uses these morphisms:
\begin{itemize}
\item 
as comultiplication, the linear map called \define{duplication}:
\[  \Delta \maps k \asrelto k^2 \]
\[  \Delta = \{(x,x,x): \; x \in k\} \subseteq k \oplus k^2 \]
\item 
as counit, the linear map called \define{deletion}:
\[  ! \maps k \asrelto \{0\} \]
\[  ! = \{ (x,0) : \; x \in k\} \subseteq k \oplus \{0\}  \]
\item
as multiplication, the linear relation called \define{coduplication}:
\[    \Delta^\dagger \maps  k^2 \asrelto k \]
\[    \Delta^\dagger =\{(x,x,x) : \;  x \in k \} \subseteq k^2 \oplus k \]
\item 
as unit, the linear relation called \define{codeletion}:
\[   !^\dagger \maps \{0\} \asrelto k \]
\[   !^\dagger = \{(0,x): \; x \in k \} \subseteq \{0\} \oplus k .\]
\end{itemize}
We call this the \define{duplicative Frobenius structure} on $k$.   In circuit theory this is
important for working with the electric potential.  The reason is that in a circuit of ideal conductive wires the potential is constant on each connected component, so wires like this:
\[
  \xymatrixrowsep{10pt}
  \xymatrixcolsep{10pt}
  \xymatrix@1{
    \comult{.075\textwidth} 
    }
\]
have the effect of duplicating the potential.   

The second way of making $k$ into an extraspecial commutative Frobenius monoid in
$\Fin\Rel_k$ uses these morphisms:
\begin{itemize}
\item as multiplication, the linear map called \define{addition}:
\[   +\maps  k^2 \asrelto k \]
\[   + = \{(x,y,x+y) : \; x,y \in k \} \subseteq k^2 \oplus k \] 
\item as unit, the linear map called \define{zero}:
\[    0 \maps  \{0\} \asrelto k \]
\[    0 = \{(0,x) : \; x \in k \} \subseteq \{0\} \oplus k \]
\item as comultiplication, the linear relation called \define{coaddition}:
\[   +^\dagger \maps  k \asrelto k \oplus k \]
\[   +^\dagger = \{(x+y,x,y) : \; x,y \in k \} \subseteq k \oplus k^2 \]
\item as counit, the linear relation called \define{cozero}:
\[     0^\dagger \maps k \asrelto \{0\} \]
\[    0^\dagger = \{(x,0)\} \subseteq k \oplus \{0\} .\] 
\end{itemize}
We call this the \define{additive Frobenius structure} on $k$.   In circuit theory this structure is
important for working with electric current.  The reason is that Kirchhoff's current law says that the sum of input currents must equal the sum of the output currents, so wires like this:
\[
  \xymatrixrowsep{10pt}
  \xymatrixcolsep{10pt}
  \xymatrix@1{
    \mult{.075\textwidth} 
    }
\]
have the effect of adding currents.

The prop $\Fin\Rel_k$ is generated by the eight morphisms listed above together with a
morphism for each element $c \in k$, namely the map from $k$ to itself given by 
multiplication by $c$.   We denote this simply as $c$:
\[    \begin{array}{cccl}
c \maps & k &\to & k \\
        & x & \mapsto &  c x.
\end{array}
\] 

From the generators we can build two other important morphisms:
\begin{itemize}
\item the \define{cup} $\cup \maps k^2 \to \{0\}$: this is the composite of coduplication 
$\Delta^\dagger \maps k^2 \to k$ and deletion $! \maps k \to \{0\}$.  
\item the \define{cap} $\cap \maps \{0\} \to k^2$: this is the composite of codeletion $!^\dagger \maps \{0\} \to k$ and duplication $\Delta \maps k \to k^2$;
\end{itemize}
These are the unit and counit for an adjunction making $k$ into its own dual.  Since every object in $\Fin\Rel_k$ is a tensor product of copies of $k$, every object becomes self-dual.  Thus, $\Fin\Rel_k$ becomes a dagger-compact category.  This explains the use of the dagger notation for half of the
eight morphisms listed above.   

Thanks to the work of Baez and Erbele \cite{BE,E} and also Bonchi, Soboci\'nski and Zanasi \cite{BSZ,BSZ2,Za}, the prop $\Fin\Rel_k$ has a presentation of this form:
\[ \xymatrix{
F(E_k) \ar@<-.5ex>[r]_-{\rho_k} \ar@<.5ex>[r]^-{\lambda_k} & F(\Sigma) + F(\Sigma) + F(k) \ar[r]^-{\square} & \Fin\Rel_k
 }
 \]
Here the signature $\Sigma$ has elements $\mu \maps 2\to 1, \iota \maps 0 \to 1, \delta \maps 1 \to 2, \epsilon \maps 1\to 0$.   In this presentation for $\Fin\Rel_k$, 
the first copy of $F(\Sigma)$ is responsible for the duplicative Frobenius structure on 
$k$, so we call its generators
\begin{itemize}
\item \define{coduplication}, $\Delta^\dagger \maps 2 \to 1$,
\item \define{codeletion}, $!^\dagger \maps 0 \to 1$,
\item \define{duplication}, $\Delta \maps 1 \to 2$,
\item \define{deletion}, $! \maps 1 \to 0$.
\end{itemize}
The second copy of $F(\Sigma)$ is responsible for the additive Frobenius structure,
so we call its generators
\begin{itemize}
\item \define{addition}, $+ \maps 2 \to 1$,
\item \define{zero}, $0 \maps 0 \to 1$,
\item \define{coaddition}, $+^\dagger \maps 1 \to 2$,
\item \define{cozero}, $0^\dagger \maps 2 \to 1$.
\end{itemize}
Finally, we have a copy of $F(k)$, consisting of elements we call
\begin{itemize}
\item \define{scalar multiplication}, $c \maps 1 \to 1$,
\end{itemize}
 one for each $c \in k$.  All these generators are mapped by $\square$ to the previously described morphisms with the same names in $\Fin\Rel_k$.   

We do not need a complete list of the equations in this presentation of $\Fin\Rel_k$, but among them are equations saying that in $\Fin\Rel_k$
\begin{enumerate}
\item $(k, \Delta^\dagger, !, \Delta, !^\dagger)$ is an extraspecial commutative Frobenius monoid;
\item $(k, +, 0, +^\dagger, 0^\dagger)$ is an extraspecial commutative Frobenius monoid;
\item $(k, +, 0, \Delta, !)$ is a bicommutative bimonoid;
\item $(k, \Delta^\dagger, !^\dagger, +^\dagger, 0^\dagger)$ is a bicommutative bimonoid.
\end{enumerate}
In Example \ref{ex:fincorel} we saw that $\Fin\Corel$ is the prop for extraspecial commutative Frobenius monoids.  Thus, items 1 and 2 give two prop morphisms from
$\Fin\Corel$ to $\Fin\Rel_k$.  We use these to define the black-boxing functor in 
Section \ref{sec:black-boxing_conductive}.   Similarly, in Example \ref{ex:finspan} we saw that $\Fin\Span$ is the prop for bicommutative bimonoids.  Thus, items 3 and 4 give two prop morphisms from $\Fin\Span$ to $\Fin\Rel_k$, but these play no role in this paper.

\section{Props of circuits}
\label{sec:prop_of_circuits}

We now introduce the most important props in this paper, give a presentation for them, and describe their algebras.  All this is a rephrasing of the fundamental work of Rosebrugh, Sabadini and Walters \cite{RSW2}.

Fix a set $\L$.   Recall the symmetric monoidal category $\Circ_\L$ described in Proposition \ref{prop:lcirc_as_symmoncat}, where an object is a finite set and a morphism from $X$ to $Y$ is an isomorphism class of $\L$-circuits from $X$ to $Y$.
By Proposition \ref{prop:strictification_1}, $\Circ_\L$ is equivalent to a prop.   Henceforth, by a slight abuse of language, we use \define{$\Circ_\L$} to denote this prop.   

To describe the algebras of $\Circ_\L$, we make a somewhat nonstandard definition. We say a set $\L$ acts on an object $x$ if for each element of $\L$ we have a morphism from $x$ to itself:

\begin{definition} 
An \define{action} of a set $\L$ on an object $x$ in a category $\C$ is a function
$\alpha \maps \L \to \hom(x,x)$.   We also call this an \define{$\L$-action}.
Given $\L$-actions $\alpha \maps \L \to \hom(x,x)$ 
and $\beta \maps \L \to \hom(y,y)$, a \define{morphism of $\L$-actions} is
a morphism $f \maps x \to y$ in $\C$ such that 
$f \alpha(\ell) = \beta(\ell) f$ for all $\ell \in \L$.   
\end{definition}

\begin{proposition}
\label{prop:lcirc_algebras}
An algebra of $\Circ_\L$ in a strict symmetric monoidal category $\C$ is a 
special commutative Frobenius monoid in $\C$ whose underlying object 
is equipped with an action of $\L$.  A morphism of algebras of $\Circ_\L$ in $\C$ is 
a morphism of special commutative Frobenius monoids that is also a morphism
of $\L$-actions.

\end{proposition}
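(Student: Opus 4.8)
The plan is to read this off from the structural theorem of Rosebrugh, Sabadini and Walters \cite{RSW2}. In the language of this paper that theorem presents $\Circ_\L$ by generators $\mu\maps 2\to 1$, $\iota\maps 0\to 1$, $\delta\maps 1\to 2$, $\epsilon\maps 1\to 0$ together with one unary generator $g_\ell\maps 1\to 1$ for each $\ell\in\L$, and relations consisting precisely of the special commutative Frobenius monoid laws on $(\mu,\iota,\delta,\epsilon)$, with no relation mentioning any $g_\ell$. (Equivalently, as noted in the introduction, $\Circ_\L$ is the coproduct in $\PROP$ of $\Fin\Cospan$ with the free prop on $\L$ unary operations.) So it suffices to combine this presentation with Lack's description of $\Fin\Cospan$-algebras, Example \ref{ex:fincospan}.

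First I would record the standard reformulation of ``algebra of a prop in $\C$''. A strict symmetric monoidal functor $F\maps\T\to\C$ is the same datum as an object $x:=F(1)$ of $\C$ together with a morphism of props $\T\to\mathrm{End}_\C(x)$, where $\mathrm{End}_\C(x)$ is the prop with hom-set $\C(x^{\otimes m},x^{\otimes n})$ from $m$ to $n$ and composition and tensor taken in $\C$; a monoidal natural transformation between two such functors is completely determined by its component $x\to x'$ at $1$, and every morphism $x\to x'$ in $\C$ arises this way. Feeding the presentation of $\Circ_\L$ into this via the universal property of free props (the adjunction $F\dashv U$ of Proposition \ref{prop:monadic}), one finds that an algebra of $\Circ_\L$ in $\C$ is exactly: an object $x$ of $\C$; morphisms $\bar\mu\maps x\otimes x\to x$, $\bar\iota\maps I\to x$, $\bar\delta\maps x\to x\otimes x$, $\bar\epsilon\maps x\to I$ obeying the special commutative Frobenius monoid axioms; and an arbitrary function $\ell\mapsto\bar g_\ell$ from $\L$ to $\C(x,x)$. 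Likewise a morphism of algebras is a morphism $h\maps x\to x'$ in $\C$ that intertwines the barred structure maps and satisfies $h\,\bar g_\ell=\bar g_\ell'\,h$ for every $\ell\in\L$.

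Now the identification is immediate. By Example \ref{ex:fincospan}, the data $(x,\bar\mu,\bar\iota,\bar\delta,\bar\epsilon)$ subject to those axioms is precisely a special commutative Frobenius monoid in $\C$, and an $h$ intertwining these maps is precisely a morphism of such; while the remaining data $\ell\mapsto\bar g_\ell$, constrained by nothing, is by definition an action of $\L$ on $x$, and the condition $h\,\bar g_\ell=\bar g_\ell'\,h$ is precisely the condition that $h$ be a morphism of $\L$-actions. Hence an algebra of $\Circ_\L$ in $\C$ is a special commutative Frobenius monoid together with an $\L$-action on its underlying object, and a morphism of algebras is a morphism that is simultaneously one of special commutative Frobenius monoids and one of $\L$-actions, as claimed.

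The genuinely substantive input is the Rosebrugh--Sabadini--Walters presentation of $\Circ_\L$, which we are quoting rather than reproving; within the argument itself the only point requiring care is the passage from that presentation to the concrete description of algebras in an arbitrary strict symmetric monoidal category $\C$ --- that is, checking that a strict symmetric monoidal functor out of $\Circ_\L$ is the same as a choice of images for the generators satisfying the relations, and similarly for natural transformations. This is routine given Proposition \ref{prop:monadic}, but it is where all the bookkeeping sits.
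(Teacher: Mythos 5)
Your argument is correct, and like the paper it ultimately rests entirely on Rosebrugh--Sabadini--Walters; the difference is the direction in which you travel between the two equivalent forms of their theorem. The paper's own proof of Proposition \ref{prop:lcirc_algebras} is a bare citation: \cite{RSW2} prove \emph{directly}, by analyzing cospans of labelled graphs, that $\Circ_\L$ is the generic symmetric monoidal category on a ``commutative separable algebra'' (special commutative Frobenius monoid) with an endomorphism for each label --- i.e.\ they prove the algebra description itself. The paper then \emph{derives} the presentation $\Circ_\L \cong \Fin\Cospan + F(\L)$ from this in Proposition \ref{prop:lcirc_coproduct}, whose proof explicitly invokes Proposition \ref{prop:lcirc_algebras} to produce the map $h \maps \Circ_\L \to X$. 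You go the other way: take the presentation as the quoted input and unwind, via the free--forgetful adjunction of Proposition \ref{prop:monadic} and Lack's Example \ref{ex:fincospan}, what its algebras are. That unwinding is sound and is the tautological content of ``presentation,'' but you should be explicit that the presentation you quote must come from \cite{RSW2} itself (or from an independent argument) and not from the paper's Proposition \ref{prop:lcirc_coproduct}, since relying on the latter would make the overall development circular. One small imprecision: the claim that ``every morphism $x \to x'$ in $\C$ arises'' as the component at $1$ of a monoidal natural transformation is false as stated --- only those morphisms intertwining the images of all operations do --- though your subsequent sentence imposes exactly the right intertwining conditions, so nothing downstream is affected.
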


\begin{proof}
This was proved by Rosebrugh, Sabadini and Walters \cite{RSW2}, though stated
in quite different language.  
\end{proof}

\noindent
We may thus say that $\Circ_\L$ is the prop for special commutative Frobenius monoids whose underlying object is equipped with an action of $\L$.  

Unsurprisingly, $\Circ_\L$ is coproduct of two props: the prop for special commutative Frobenius monoids and the prop for $\L$-actions.  To describe the latter, consider a signature with one unary operation for each element of $\L$, and no other operations.  For simplicity we call this signature simply $\L$.   The free prop $F(\L)$ has a morphism $\ell\maps 1\to 1$ for each $\ell \in \L$.  For any strict symmetric monoidal category $\C$, the category of algebras of $F(\L)$ in $\C$ is the category of $\L$-actions and morphisms of $\L$-actions.  We thus call $F(\L)$ \define{the prop for $\L$-actions}.

\begin{proposition} 
\label{prop:lcirc_coproduct}
$\Circ_\L$ is the coproduct of $\Fin\Cospan$ and the prop for $\L$-actions.
\end{proposition}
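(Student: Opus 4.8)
The plan is to verify the universal property of the coproduct in $\PROP$ directly, using Proposition \ref{prop:lcirc_algebras} together with the identifications of $\Fin\Cospan$ with the prop for special commutative Frobenius monoids (Example \ref{ex:fincospan}) and of $F(\L)$ with the prop for $\L$-actions. The guiding principle is that a coproduct of props classifies the conjunction of the structures its summands classify. Concretely, for any prop $D$: a morphism of props $\Fin\Cospan \to D$ is the same as a special commutative Frobenius monoid structure on the object $1 \in D$; a morphism of props $F(\L) \to D$ is the same as an $\L$-action on $1 \in D$ (by the adjunction $F \dashv U$ of Proposition \ref{prop:monadic}, such a morphism being a map of signatures $\L \to U D$, i.e.\ a function $\L \to D(1,1)$); and a morphism of props $\Circ_\L \to D$ is --- by Proposition \ref{prop:lcirc_algebras}, using that a strict symmetric monoidal functor between props which fixes $1$ is exactly a morphism of props --- the same as a special commutative Frobenius monoid on $1 \in D$ equipped with an $\L$-action. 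Matching these three descriptions is what the proof amounts to.

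First I would fix the comparison morphisms. Feeding the identity functor $\id_{\Circ_\L}$ into Proposition \ref{prop:lcirc_algebras} exhibits $1 \in \Circ_\L$ as a special commutative Frobenius monoid carrying an $\L$-action; its Frobenius part yields a morphism of props $j_1 \maps \Fin\Cospan \to \Circ_\L$ and its $\L$-action part a morphism of props $j_2 \maps F(\L) \to \Circ_\L$ (these simply name the generators $\mu, \iota, \delta, \epsilon$ and the $\ell \maps 1 \to 1$). Now given a prop $D$ and morphisms of props $f \maps \Fin\Cospan \to D$, $g \maps F(\L) \to D$, the morphism $f$ names a special commutative Frobenius monoid on $f(1) = 1 \in D$ and $g$ names an $\L$-action on $g(1) = 1 \in D$; Proposition \ref{prop:lcirc_algebras} then produces from this combined structure a strict symmetric monoidal functor $h \maps \Circ_\L \to D$, which fixes $1$ and is therefore a morphism of props. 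The equations $h j_1 = f$ and $h j_2 = g$ hold because a strict symmetric monoidal functor carries the named structure on $1 \in \Circ_\L$ to the named structure on $1 \in D$ --- that is, by the naturality in $\C$ of the equivalences of Proposition \ref{prop:lcirc_algebras}, of Example \ref{ex:fincospan}, and of the free-prop adjunction. For uniqueness, any morphism of props $h'$ with $h' j_1 = f$ and $h' j_2 = g$ names, as an algebra of $\Circ_\L$, a special commutative Frobenius monoid on $1 \in D$ whose Frobenius part (via $h' j_1 = f$) is that of $f$ and whose $\L$-action (via $h' j_2 = g$) is that of $g$; since Proposition \ref{prop:lcirc_algebras} sets up a bijection, $h' = h$.

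I expect the only delicate point to be this last naturality bookkeeping: one must invoke the standard fact that the bijections ``algebra $\leftrightarrow$ structure-bearing object'' respect post-composition with strict symmetric monoidal functors, equivalently that such functors preserve the relevant algebraic structure on the distinguished object. A more syntactic route avoids it: since $F$ is a left adjoint it preserves coproducts, and coproducts commute with coequalizers, so combining the presentation of $\Fin\Cospan$ from Example \ref{ex:fincospan_presentation} with the trivial presentation of $F(\L)$ (generators $\L$, no equations) presents $\Fin\Cospan + F(\L)$ by the generators $\Sigma + \L$ subject to exactly the special commutative Frobenius monoid equations; checking that this is a presentation of $\Circ_\L$ is then just Proposition \ref{prop:lcirc_algebras} read off as a presentation. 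I would take the first argument as the main proof and record the second as a remark.
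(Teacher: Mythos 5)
Your proposal is correct, but it takes a genuinely different route from the paper. You verify the universal property of the coproduct directly, matching up the three classifications (morphisms out of $\Fin\Cospan$, out of $F(\L)$, and out of $\Circ_\L$ as structures on the object $1$ of the target), whereas the paper works entirely with presentations: it forms the coequalizer $X$ of the Frobenius equations imposed on $F(\Sigma+\L)$, identifies $X\cong\Circ_\L$ by building explicit mutually inverse morphisms $g$ and $h$ (using Proposition \ref{prop:lcirc_algebras} for $h$ and the coequalizer's universal property for $g$, plus the observation that the Frobenius monoid in $\Circ_\L$ has no nontrivial automorphisms to upgrade $gh\cong 1$ to $gh=1$), and then shows in general that the coproduct of two presentations presents the coproduct, so that $X\cong\Fin\Cospan+F(\L)$ as well. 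Your ``syntactic remark'' at the end is essentially the paper's actual argument. Two comments on the trade-off. First, the paper's route yields the explicit presentation of $\Circ_\L$ recorded as Equation (\ref{eq:lcirc_1}), which is reused later to define $\Ccirc_\L$ in Section \ref{sec:signal}; your direct argument does not produce this byproduct. Second, be aware that your main argument quietly needs the correspondences ``morphism of props $\T\to D$ $\leftrightarrow$ structure on $1\in D$'' to be on-the-nose bijections, not merely equivalences of algebra categories as Proposition \ref{prop:lcirc_algebras} and Example \ref{ex:fincospan} literally state (an equivalence would only give $hj_1\cong f$ and $h'\cong h$); the honest justification is exactly the presentation-level fact that morphisms out of these props are determined by their values on generators, i.e.\ the content of your remark. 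So your ``remark'' is not optional bookkeeping but the load-bearing step, which is one reason the paper leads with it. With that understood, both arguments are sound.
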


\begin{proof} 
Let
\[  \xymatrix{
F(E)\ar@<-.5ex>[r]_-{\rho} \ar@<.5ex>[r]^-{\lambda} & F(\Sigma)}\]
be the presentation of $\Fin\Cospan$ given in Example \ref{ex:fincospan_presentation}.  Here $\Sigma$ is the signature with elements 
$\mu \maps 2\to 1, \iota \maps 0 \to 1, \delta \maps 1 \to 2$ and
$\epsilon \maps 1\to 0$, and the equations are the laws for a special commutative Frobenius monoid.

Since left adjoints preserve colimits, we have a natural isomorphism 
$F(\Sigma)+ F(\L) \cong F(\Sigma+\L)$.   Let $\iota \maps F(\Sigma) \to 
F(\Sigma+\L)$ be the resulting monomorphism.
By Corollary \ref{cor:presentation} we can form the coequalizer $X$ here:
\[ \xymatrix{
F(E)\ar@<-.5ex>[r]_-{\iota \rho} \ar@<.5ex>[r]^-{\iota \lambda} & F(\Sigma+\L) \ar[r] & X. } \]
We claim that $X \cong \Circ_\L$.  

On the one hand, there is a morphism $f \maps F(\Sigma) \to \Circ_\L$ sending $\mu, \iota, \delta$ and $\epsilon$ in $F(\Sigma)$ to the corresponding morphisms in $\Circ_\L$, and $f \lambda = f \rho$ because these morphisms make $1 \in \Circ_\L$ into
a special commutative Frobenius monoid.    We thus have a commutative diagram
\[
    \xymatrix{
      F(E)\ar@<-.5ex>[r]_-{\iota \rho} \ar@<.5ex>[r]^-{\iota \lambda} & F(\Sigma+\L) \ar[r]^-{j} \ar[dr]_{f} & X
\\
 & & \Circ_\L . }
\]
By the universal property of the coequalizer, there is a unique morphism $g \maps 
X \to \Circ_\L$ with $g j = f$.  On the other hand, the object $1 \in X$ is, by construction, a special commutative Frobenius monoid with $\L$-action.  By Proposition \ref{prop:lcirc_algebras} we thus obtain an algebra of $\Circ_\L$ in $X$, that is, a morphism $h$ as follows:
\[
    \xymatrix{
      F(E)\ar@<-.5ex>[r]_-{\iota\rho} \ar@<.5ex>[r]^-{\iota\lambda} & F(\Sigma+\L) \ar[r]^-{j} \ar[dr]_{f} & X \ar@<.5ex>[d]^{g}
\\
 & & \Circ_\L . \ar@<.5ex>[u]^{h} }
\]
It is easy to check that $h f = j$ by seeing how both sides act on the elements
$\mu, \iota, \delta, \epsilon,\ell$ of $F(\Sigma+\L)$.  By the universal property 
of $X$ we have $h g = 1$, and because the Frobenius monoid in $\Circ_\L$ has no nontrivial automorphisms we also have $g h = 1$.  Thus $X \cong \Circ_\L$ and 
\begin{equation}
\label{eq:lcirc_1}
\xymatrix{F(E)\ar@<-.5ex>[r]_-{\rho} \ar@<.5ex>[r]^-{\lambda} & F(\Sigma+\L)\ar[r]^{f} & \Circ_\L } 
\end{equation}
is a coequalizer.

In general, given presentations of props $X_1$ and $X_2$, we have coequalizers:
\[
    \xymatrix{
      F(E_i) \ar@<-.5ex>[r]_{\rho_i} \ar@<.5ex>[r]^{\lambda_i} & F(\Sigma_i)
	\ar@<0ex>[r]^-{g_i} & X_i ,
    }  \qquad i = 1, 2
  \]
whose coproduct is another coequalizer:
\[
    \xymatrix{
      F(E_1)+F(E_2) \ar@<-.5ex>[rr]_{\rho_1 + \rho_2} \ar@<.5ex>[rr]^{\lambda_1 + \lambda_2} && F(\Sigma_1)+F(\Sigma_2) 		         	\ar@<0ex>[rr]^-{g_1 + g_2} &&  X_1 + X_2.
    }
  \]
Since left adjoints preserve coproducts, we obtain a coequalizer 
\[
    \xymatrix{
      F(E_1+E_2) \ar@<-.5ex>[r]_{} \ar@<.5ex>[r]^{} & F(\Sigma_1+\Sigma_2) 		         	\ar@<0ex>[r]^-{} &  X_1 + X_2
    }
  \]
which gives a presentation for $X_1 + X_2$. 

In our situation $E \cong E + 0$ where $0$ is the initial or `empty' signature, 
so we have a coequalizer 
\[
    \xymatrix{
      F(E) + F(0)\ar@<-.5ex>[r]_{} \ar@<.5ex>[r]^{} & F(\Sigma) + F(\L) 		         	\ar@<0ex>[r]^-{} & \Fin\Corel + F(\L)
    }
  \]
Combining this with Equation (\ref{eq:lcirc_1}), we see $\Circ_\L \cong \Fin\Cospan + F(\L)$.
\end{proof}

Proposition \ref{prop:lcirc_coproduct} plays a large part in the rest of the paper.   
For example, we can use it to get a more algebraic description of the functor 
$G \maps \Circ \to \Fin\Cospan$ discussed in Section \ref{sec:conductive}.

\begin{example}  
\label{ex:circ_to_fincospan}
We can take $\L$ to be a one-element set, say $\{\ell\}$.  In this case we abbreviate the prop $\Circ_\L$ as $\Circ$,  abusing language slightly since in Definition \ref{defn:Circ} we used the same name for an equivalent symmetric monoidal category.  

By Proposition \ref{prop:lcirc_coproduct} we can identify $\Circ$ with the coproduct $\Fin\Cospan + F(\{\ell\})$.  There thus exists a unique morphism of props
\[              G \maps \Circ \to \Fin\Cospan \]
such that 
\[           G(f) = f  \]
for any morphism $f$ in $\Fin\Cospan$ and
\[           G(\ell) = 1_1. \]
In other words, $G$ does nothing to morphisms in the sub-prop $\Fin\Cospan$, while it sends 
the morphism $\ell$ to the identity.  It thus a retraction for the inclusion of $\Fin\Cospan$ in $\Circ$.

In Section \ref{sec:conductive} we explained how a morphism in $\Circ$ can be seen as a cospan of finite sets
\[
  \xymatrix{
    & N \\
    X \ar[ur]^{i} && Y \ar[ul]_o
  }
\]
together with a graph $\Gamma$ having $N$ as its set of vertices.  In our new description of $\Circ$, each edge of $\Gamma$ corresponds to a copy of the morphism $\ell$.   The functor $G$ has 
the effect of collapsing each edge of $\Gamma$ to a point, since it sends $\ell$ to the identity.  The result is cospan of finite sets where the apex is the set of connected components $\pi_0(\Gamma)$. 
\end{example}

\section{Black-boxing circuits of ideal conductive wires}
\label{sec:black-boxing_conductive}

In Section \ref{sec:conductive} we looked at circuits made of ideal perfectly conductive wires
and described symmetric monoidal functors
\[     \Circ \stackrel{G}{\longrightarrow} \Fin\Cospan \stackrel{H}{\longrightarrow} 
\Fin\Corel.  \]
In Example \ref{ex:fincospan_to_fincorel} we described $H$ as a morphism of props, and in
Example \ref{ex:circ_to_fincospan} we did the same for $G$.   However, so far we have only briefly touched on the `behavior' of such circuits: that is, what they actually \emph{do}.  A circuit provides a relation between potentials and currents at its inputs and outputs.  For a circuit with $m$ inputs and $n$ outputs, this is a linear relation on $2m+2n$ variables.  We now describe a functor called `black-boxing', which takes any circuit of ideal conductive wires and extracts this linear relation.  

In Section \ref{sec:linear_relations} we saw that the object $k \in \Fin\Rel_k$ has two extraspecial commutative Frobenius monoids: the `duplicative' structure and the `additive'  structure. The first is relevant to potentials, while the second is relevant to currents \cite{BF}.  In any symmetric monoidal category, the tensor product of two  monoids is a monoid in standard way, and dually for comonoids.  In the same way, the tensor product of extraspecial commutative Frobenius monoids becomes another extraspecial commutative Frobenius monoid.   Thus, we can make $k \oplus k$ into an extraspecial commutative Frobenius monoid in $\Fin\Rel_k$ where the first copy of $k$ has the duplicative Frobenius structure and the second copy has the additive structure.   Thanks to Proposition \ref{prop:lcirc_algebras}, this determines a strict symmetric monoidal functor
\[    K \maps \Fin\Corel \to \Fin\Rel_k. \]
Composing this with 
\[     HG \maps \Circ \to  \Fin\Corel  \]
we get the \define{black-boxing} functor 
\[   \blacksquare = KHG \maps \Circ \to \Fin\Rel_k  .\]
Here is what black-boxing does to the generators of $\Circ$:

\[
  \xymatrixrowsep{10pt}
  \xymatrixcolsep{10pt}
  \xymatrix@1{
    \mult{.075\textwidth} \ar@{|->}@<.25ex>[r]^-{\blacksquare} \;\; & \;\; \{( \phi_1, I_1,\phi_2, I_2,\phi_3, I_3) :\; \phi_1= \phi_2 = \phi_3,  I_1+ I_2=I_3 \}  \\
    \comult{.075\textwidth} \ar@{|->}@<.25ex>[r]^-{\blacksquare} \;\; & \;\; \{( \phi_1, I_1,\phi_2, I_2,\phi_3, I_3)  :\;  \phi_1= \phi_2 = \phi_3, I_1= I_2 + I_3\}  \\
 \unit{.075\textwidth} \ar@{|->}@<.25ex>[r]^-{\blacksquare} \;\; & \;\; \{(\phi_2, I_2) : \; I_2 = 0\}  \hspace{34ex} \\
\counit{.075\textwidth} \ar@{|->}@<.25ex>[r]^-{\blacksquare} \;\; & \;\; \{( \phi_1, I_1) :\; I_1= 0\} 
\hspace{34ex} \\
\singlegen{.075\textwidth} \ar@{|->}@<.25ex>[r]^-{\blacksquare} \;\; & \;\; \{( \phi_1,I_1,\phi_2,I_2) :\; \phi_1=\phi_2,  I_1= I_2 \} 
 \hspace{18ex} \\
 \hspace{20ex}
  }
\]
\vskip -1em \noindent
Here $\ell$ is the generator corresponding to an ideal conductive wire; black-boxing maps it to the identity morphism on $k^2$.  Since black-boxing is a symmetric monoidal functor, we can decompose a large circuit made of ideal conductive wires into simple building blocks in order to determine the relation it imposes between the potentials and currents at its inputs and outputs.

The black-boxing functor as described so far is not a morphism of props, since it sends the object
$1 \in \Circ$ to the object $2 \in \Fin\Rel_k$, that is, the vector space $k^2$.   However, 
it can reinterpreted as a morphism of props with the help of some symplectic geometry.  Instead of linear relations between finite-dimensional vector spaces, we should use Lagrangian relations between symplectic vector spaces.  For a detailed explanation of this idea, see our work with Fong \cite{BF}; here we simply state the key facts.

\begin{definition}
A  \define{symplectic} vector space $V$ over a field $k$ is a finite-dimensional vector space equipped with a map $\omega \maps V \times  V \to k$ that is:
\begin{itemize}
\item bilinear,
\item alternating: $\omega(v,v) = 0$ for all $v \in V$,
\item nondegenerate: if $\omega(u,v) = 0 $ for all $u \in V$ then $v = 0$.
\end{itemize}
Such a map $\omega$ is called a \define{symplectic structure}.
\end{definition}
\noindent
There is a standard way to make $k\oplus k$ into a symplectic vector space, namely
\[   \omega((\phi, I), (\phi',I')) =  \phi I' - \phi' I . \]
Given two symplectic vector spaces $(V_1,\omega_1)$ and $(V_2, \omega_2)$,
we give their direct sum the symplectic structure 
\[   (\omega_1 \oplus \omega_2)((u_1,u_2), (v_1, v_2)) = \omega_1(u_1, v_1) + 
\omega_2(u_2, v_2)  .\]
In what follows, whenever we treat as $(k \oplus k)^n$ as a symplectic vector space, we
give it the symplectic structure obtained by taking a direct sum of copies of $k \oplus k$ with 
the symplectic structure described above.   Every symplectic vector space is isomorphic to 
$(k \oplus k)^n$ for some $n$, so every symplectic vector space is even dimensional \cite[Thm.\ 21.2]{GS}.  

The concept of a `Lagrangian relation' looks subtle at first, but it has become clear 
in mathematical physics that for many purposes this is the right notion of
morphism between symplectic vector spaces \cite{We1,We2}.    Lagrangian relations
are also known as `canonical relations'.  The definition has a few prerequisites: 

\begin{definition}
Given a symplectic structure $\omega$ on a vector space $V$, we define its \define{conjugate} to be the symplectic structure $\overline\omega = -
\omega$, and write the conjugate symplectic vector space $(V,\overline\omega)$ as
$\overline V$.
\end{definition}

\begin{definition}
A subspace $L$ of a symplectic vector space $(V,\omega)$ is \define{isotropic} if
$\omega (v,w) = 0$ for all $v,w\in V$.   It is \define{Lagrangian} if is isotropic
and not properly contained in any other isotropic subspace.
\end{definition}

\noindent
One can show that a subspace $L \subseteq V$ is Lagrangian if and only it is
isotropic and $\dim(L) = \frac{1}{2}\dim(V)$.  This condition is often easier
to check.  

\begin{definition}
Given symplectic vector spaces $(V,\omega)$ and $(V',\omega')$, a
\define{linear Lagrangian relation} $L \maps V \asrelto V'$ is a Lagrangian
subspace $L \subseteq \overline{V} \oplus V'$.
\end{definition}

We need the conjugate symplectic structure on $V$ to show that the identity
relation is a linear Lagrangian relation.   With this twist, linear Lagrangian relations are 
also closed under composition: for a self-contained proof of this well-known fact, see \cite[Prop.\ 6.8]{BF}.  There is thus a category with symplectic vector spaces as objects and linear Lagrangian relations as morphisms.  This becomes symmetric monoidal using direct sums: in particular, if the linear relations $L \maps U \asrelto V$ and $L' \maps U' \asrelto V'$ are Lagrangian, so is $L \oplus L' \maps U \oplus U' \asrelto V \oplus V'$.  One can show using Proposition \ref{prop:strictification_1} that this symmetric monoidal category is equivalent to the following prop.

\begin{definition}
Let \define{$\Lag\Rel_k$} be the prop where a morphism from $m$ to $n$ is a  linear Lagrangian relation from $(k \oplus k)^m$ to $(k \oplus k)^n$, composition is the usual composition of relations, and the symmetric monoidal structure is given by direct sum.
\end{definition}

We can now redefine
 the functor $K \maps \Fin\Corel \to \Fin\Rel_k$, and the black-boxing
functor $\blacksquare \maps \Circ \to \Fin\Rel_k$, to be morphisms of props taking values in 
$\Lag\Rel_k$.  This is the view we take henceforth.

\begin{proposition}
\label{prop:K_1}
The strict symmetric monoidal functor $K \maps \Fin\Corel \to \Fin\Rel_k$
maps any morphism $f \maps m \to n$ is any morphism in $\Fin\Cospan$ to 
a Lagrangian linear relation $K(f) \maps (k \oplus k)^m \asrelto (k \oplus k)^n$.  
It thus defines a morphism of props, which we call
\[   K \maps \Circ \to \Lag\Rel_k . \]
\end{proposition}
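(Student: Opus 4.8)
The plan is to use the presentation of $\Fin\Corel$ from Example~\ref{ex:fincorel} (via Example~\ref{ex:fincospan_to_fincorel}): $\Fin\Corel$ is the prop generated by $\mu\maps 2\to 1$, $\iota\maps 0\to 1$, $\delta\maps 1\to 2$, $\epsilon\maps 1\to 0$ subject to the extraspecial commutative Frobenius laws. Hence every morphism of $\Fin\Corel$ is obtained from these four generators, the identity $1\maps 1\to 1$ and the braiding $2\to 2$ by composition and tensor product, and since $K$ is strict symmetric monoidal every $K(f)$ is obtained in $\Fin\Rel_k$ from $K$ of these six morphisms by composition and direct sum. Now linear Lagrangian relations are closed under direct sum (remarked just before this proposition) and under composition (\cite[Prop.~6.8]{BF}), and both the identity relation $(k\oplus k)^n\asrelto(k\oplus k)^n$ and the braiding $(k\oplus k)^2\asrelto(k\oplus k)^2$ are Lagrangian — the conjugation built into the definition is precisely what makes the diagonal, and the graph of the summand-swap, isotropic, and both are visibly half-dimensional. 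So everything reduces to checking that $K(\mu)$, $K(\iota)$, $K(\delta)$, $K(\epsilon)$ are Lagrangian.

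For this I would use the criterion recalled in Section~\ref{sec:black-boxing_conductive}: a subspace $L$ of a symplectic vector space $V$ is Lagrangian iff it is isotropic and $\dim L=\tfrac12\dim V$. By definition of $K$ — the first copy of $k$ carries the duplicative Frobenius structure, the second the additive one — the relations $K(\mu),K(\delta),K(\iota),K(\epsilon)$ are exactly the ones displayed for multiplication, comultiplication, unit and counit in Section~\ref{sec:black-boxing_conductive}. For instance $K(\mu)=\{(\phi_1,I_1,\phi_2,I_2,\phi_3,I_3):\phi_1=\phi_2=\phi_3,\ I_1+I_2=I_3\}\subseteq \overline{(k\oplus k)^2}\oplus(k\oplus k)$: it is cut out by three independent linear equations, so $\dim K(\mu)=3=\tfrac12\cdot 6$, and substituting those equations into the ambient symplectic form $(-\omega)\oplus(-\omega)\oplus\omega$ yields $0$ identically, so $K(\mu)$ is isotropic, hence Lagrangian. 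The same two short checks — half-dimension by equation count, isotropy by a one-line substitution, with care about which copies of $k\oplus k$ are conjugated — dispatch $K(\delta)$, $K(\iota)$ and $K(\epsilon)$.

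Granting these, every $K(f)$ is a composite of direct sums of Lagrangian relations, hence Lagrangian. Therefore $K$ takes values in the symmetric monoidal subcategory of $\Fin\Rel_k$ on the objects $0,2,4,\dots$ whose morphisms are the Lagrangian subspaces; since composition and direct sum there agree with those of $\Lag\Rel_k$ and $K(1)=k\oplus k$ is the object $2$, the assignment $n\mapsto n$, $f\mapsto K(f)$ is a well-defined strict symmetric monoidal functor that is the identity on objects, i.e.\ a morphism of props $K\maps\Fin\Corel\to\Lag\Rel_k$, and composing with $HG\maps\Circ\to\Fin\Corel$ yields the black-boxing morphism of props $\blacksquare=KHG\maps\Circ\to\Lag\Rel_k$. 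I do not expect a real obstacle: the only thing needing care is the bookkeeping in the middle step — writing each generator's image as a subspace of the \emph{correctly conjugated} ambient symplectic space before counting dimensions and testing isotropy — together with the implicit fact that the tensor product of the duplicative and additive extraspecial Frobenius structures is again an extraspecial Frobenius structure, which is exactly what makes the definition of $K$ on the generators of $\Fin\Corel$ legitimate in the first place.
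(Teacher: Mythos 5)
Your proposal is correct and follows essentially the same route as the paper's proof: reduce to the four generators $\mu,\iota,\delta,\epsilon$ (using closure of Lagrangian relations under composition and direct sum), then verify that each generator's image is an isotropic subspace of half the ambient dimension. You spell out the reduction step and the dimension/isotropy computation in more detail than the paper does, but the argument is the same.
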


\begin{proof}
First note that $\Lag\Rel_k$ is a symmetric monoidal 
subcategory of $\Fin\Rel_k$: composition and direct sum for linear Lagrangian relations is a special case of composition and direct sum for linear relations.  Second, note that while $K$ applied to the object $n \in \Fin\Cospan$ gives the vector space $(k \oplus k)^n$, which is the object $2n$ in $\Fin\Rel_k$, this is the object $n$ in $\Lag\Rel_k$.  Thus, to check that strict symmetric monoidal functor $K \maps \Fin\Cospan \to \Fin\Rel_k$ defines a morphism of props from $\Fin\Cospan$ to $\Lag\Rel_k$, we just need to check that $K(f)$ is Lagrangian for each
generator $f$ of $\Fin\Cospan$.  We have
\[
\begin{array}{lcl}
  K(m) &=& \{(\phi_1,I_1,\phi_2,I_2,\phi_3,I_3) :\;  \phi_1= \phi_2 = \phi_3, \; I_1+ I_2=I_3\}  \\
  K(\Delta) &=& \{(\phi_1,I_1,\phi_2,I_2,\phi_3,I_3) :\; \phi_1= \phi_2 = \phi_3, \;  I_1= I_2 + I_3\}  \\
  K(\iota) &=& \{(\phi_2, I_2) : \; I_2 = 0\}   \\
  K(\epsilon) &=& \{(\phi_1, I_1) :\; I_1= 0\}
\end{array}
\]
In each case the relation is an isotropic subspace of half
the total dimension, so it is Lagrangian.
\end{proof}

We can characterize this new improved $K$ as follows:

\begin{proposition}
\label{prop:K_2}
There exists a unique morphism of props
\[    K \maps \Fin\Corel \to \Lag\Rel_k \]
sending the extraspecial commutative Frobenius monoid $1 \in \Fin\Corel$ to the extraspecial commutative Frobenius monoid $k \oplus k \in \Lag\Rel_k$, where the first copy of $k$ is equipped with its additive Frobenius structure and the second is equipped with its duplicative Frobenius structure.
\end{proposition}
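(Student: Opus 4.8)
The plan is to deduce both existence and uniqueness from the universal property of $\Fin\Corel$ recorded in Example \ref{ex:fincorel}, namely that $\Fin\Corel$ is the prop for extraspecial commutative Frobenius monoids. Under this identification a morphism of props out of $\Fin\Corel$ is completely determined by where it sends the four generators $\mu \maps 2 \to 1$, $\iota \maps 0 \to 1$, $\delta \maps 1 \to 2$, $\epsilon \maps 1 \to 0$, and these must go to the multiplication, unit, comultiplication and counit of an extraspecial commutative Frobenius monoid in the target. So uniqueness of $K$ is immediate once we fix the image object to be $k \oplus k$ with the stated Frobenius structure, and existence amounts to checking that $k \oplus k$, equipped with that structure, really is an extraspecial commutative Frobenius monoid \emph{in} $\Lag\Rel_k$ (not merely in $\Fin\Rel_k$).

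Next I would assemble the candidate structure. In Section \ref{sec:linear_relations} the object $k \in \Fin\Rel_k$ was made into an extraspecial commutative Frobenius monoid in two ways, the additive structure $(k,+,0,+^\dagger,0^\dagger)$ and the duplicative structure $(k,\Delta^\dagger,!,\Delta,!^\dagger)$; and, as noted just before Proposition \ref{prop:K_1}, a tensor product of extraspecial commutative Frobenius monoids is again one. Giving one copy of $k$ in $k \oplus k$ the additive structure and the other the duplicative structure therefore produces an extraspecial commutative Frobenius monoid in $\Fin\Rel_k$ whose four structure maps are, after the evident reordering of coordinates, exactly the relations $K(\mu), K(\iota), K(\delta), K(\epsilon)$ written down in the proof of Proposition \ref{prop:K_1} (the relations $\phi_1=\phi_2=\phi_3$, $I_1+I_2=I_3$, and so on).

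The substantive step is to show this monoid descends to $\Lag\Rel_k$. As established in the proof of Proposition \ref{prop:K_1}, $\Lag\Rel_k$ sits inside $\Fin\Rel_k$ as a symmetric monoidal subcategory --- via the faithful strict symmetric monoidal functor identifying the object $n$ of $\Lag\Rel_k$ with the object $2n$ of $\Fin\Rel_k$ --- and is closed under composition and direct sum. The four structure maps above are linear Lagrangian relations with respect to the standard symplectic form $\omega((\phi,I),(\phi',I')) = \phi I' - \phi' I$ on the relevant powers of $k \oplus k$: this was precisely the computation in the proof of Proposition \ref{prop:K_1}, where each was checked to be an isotropic subspace of half the total dimension. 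Hence all the composites and tensor products appearing in the extraspecial commutative Frobenius axioms already lie in $\Lag\Rel_k$, and since a faithful functor reflects equalities of morphisms, those axioms --- which hold in $\Fin\Rel_k$ --- hold in $\Lag\Rel_k$ as well. This yields the desired Frobenius monoid, and hence the morphism $K$.

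I expect the only real friction to be bookkeeping: keeping straight which copy of $k$ carries which structure, and absorbing the sign introduced by the conjugate symplectic space $\overline V$ in the definition of a Lagrangian relation, so that the defining linear equations of each generator genuinely annihilate $\omega$. This is the electrical content --- conservation of power, in the spirit of Tellegen's theorem --- and it is already implicit in Proposition \ref{prop:K_1}, so there is no new conceptual obstacle.
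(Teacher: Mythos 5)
Your argument is correct and follows essentially the same route as the paper: uniqueness from the fact that $\Fin\Corel$ is the prop for extraspecial commutative Frobenius monoids, and existence by observing that the four structure maps on $k \oplus k$ are Lagrangian (the computation already done for Proposition \ref{prop:K_1}) together with the closure of $\Lag\Rel_k$ under composition and direct sum inside $\Fin\Rel_k$. The paper states this in one line; you have merely spelled out the same steps in more detail.
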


\begin{proof} 
Existence follows from Proposition \ref{prop:K_1}; uniqueness
follows from the fact that $\Fin\Corel$ is the prop for extraspecial commutative Frobenius
monoids.
\end{proof}

We can now reinterpret black-boxing of circuits of ideal conductive wires as a morphism of props:

\begin{definition}
We define \define{black-boxing} to be the morphism of props $\blacksquare \maps \Circ \to \Lag\Rel_k$ given by the composite
\[     \Circ \stackrel{G}{\longrightarrow} \Fin\Cospan \stackrel{H}{\longrightarrow} 
\Fin\Corel \stackrel{K}{\longrightarrow} \Lag\Rel_k. \]
\end{definition}

\section{Black-boxing linear circuits}
\label{sec:black-boxing}

Black-boxing circuits of ideal conductive wires is just the first step: 
one can extend black-boxing to circuits made of wires labeled by elements of any set $\L$.  The elements of $\L$ play the role of `circuit elements' such as resistors, inductors and capacitors.  The extended black-boxing functor can be chosen so that these circuit elements are mapped to arbitrary Lagrangian linear relations from $k\oplus k$ to itself.   

The key to doing this is Proposition \ref{prop:lcirc_coproduct}, which says that
\[   \Circ_\L \cong \Fin\Cospan + F(\L) . \]
We also need Propositions \ref{prop:fincospan_to_fincorel} and 
\ref{prop:K_2}, which give prop morphisms $H \maps \Fin\Cospan \to \Fin\Corel$
and $K \maps \Fin\Corel \to \Lag\Rel_k$, respectively.

\begin{theorem} 
\label{thm:black-boxing_1}
For any field $k$ and label set $\L$, there exists a unique morphism of props 
\[ \blacksquare \maps \Circ_\L \cong \Fin\Cospan + F(\L) \to \Lag\Rel_k \] 
such that $\blacksquare \vert_{\Fin\Cospan}$ is the composite
\[    
       \Fin\Cospan \stackrel{H}{\longrightarrow} 
       \Fin\Corel \stackrel{K}{\longrightarrow} \Lag\Rel_k  \]
and $\blacksquare \vert_{F(\L)}$ maps each $\ell \in \L$ to an arbitrarily chosen
Lagrangian linear relation from $k \oplus k$ to itself.    
\end{theorem}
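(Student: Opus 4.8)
The plan is to deduce everything from the universal property of the coproduct $\Fin\Cospan + F(\L)$ in $\PROP$, together with the universal property of the free prop $F(\L)$. By Proposition~\ref{prop:lcirc_coproduct} we have $\Circ_\L \cong \Fin\Cospan + F(\L)$, so a morphism of props out of $\Circ_\L$ is the same thing as a pair of morphisms of props, one out of $\Fin\Cospan$ and one out of $F(\L)$, and such a pair determines the morphism uniquely. For the $\Fin\Cospan$ summand I would simply take the composite
\[ \Fin\Cospan \stackrel{H}{\longrightarrow} \Fin\Corel \stackrel{K}{\longrightarrow} \Lag\Rel_k, \]
using Proposition~\ref{prop:fincospan_to_fincorel} for $H$ and Proposition~\ref{prop:K_2} for $K$; note that $K$ genuinely lands in $\Lag\Rel_k$ (and not merely in $\Fin\Rel_k$) by Proposition~\ref{prop:K_1}.

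For the $F(\L)$ summand I would use that, by the monadic adjunction $F \dashv U$ of Proposition~\ref{prop:monadic}, a morphism of props $F(\L) \to \Lag\Rel_k$ corresponds to a morphism of signatures $\L \to U(\Lag\Rel_k)$ --- here $\L$ denotes the signature with one element $\ell \maps 1 \to 1$ for each $\ell \in \L$ and no other elements. A morphism of signatures of this form is exactly a function sending each $\ell$ to an element of $\Lag\Rel_k(1,1)$; and by the definition of $\Lag\Rel_k$ (in which the object $1$ is the symplectic vector space $k \oplus k$) the set $\Lag\Rel_k(1,1)$ is precisely the set of linear Lagrangian relations from $k \oplus k$ to itself. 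So an arbitrary choice of such a relation for each $\ell \in \L$ yields a unique morphism of props $\blacksquare\vert_{F(\L)} \maps F(\L) \to \Lag\Rel_k$. Pairing this with the composite $KH$ and invoking the universal property of the coproduct produces the required $\blacksquare \maps \Circ_\L \to \Lag\Rel_k$, and forces it to be unique once its two restrictions are pinned down; uniqueness of $\blacksquare\vert_{F(\L)}$ in turn follows because the generators $\ell$ generate $F(\L)$ as a prop.

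There is no serious obstacle here: the weight of the theorem is carried almost entirely by Proposition~\ref{prop:lcirc_coproduct}. The only things requiring a line of care are the identification $\Lag\Rel_k(1,1) = \{\text{linear Lagrangian relations } k \oplus k \asrelto k \oplus k\}$ and the fact, from Proposition~\ref{prop:K_1}, that the composite $\Fin\Cospan \to \Fin\Corel \to \Fin\Rel_k$ restricts to a morphism of props into the sub-prop $\Lag\Rel_k$. As an alternative to the adjunction argument for $F(\L)$, one could instead cite the fact recalled just before the theorem that algebras of $F(\L)$ in a strict symmetric monoidal category $\C$ are precisely $\L$-actions on objects of $\C$; applying this with $\C = \Lag\Rel_k$ and the object $1$ gives the same conclusion.
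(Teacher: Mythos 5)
Your proposal is correct and follows essentially the same route as the paper: the paper's proof is a single sentence invoking the universal property of the coproduct $\Fin\Cospan + F(\L)$ together with the fact that $F(\L)$ is the prop for $\L$-actions, which is precisely the ``alternative'' you mention at the end (your adjunction argument via $\Lag\Rel_k(1,1)$ is an equivalent unpacking of the same fact). The extra care you take in noting that $KH$ lands in $\Lag\Rel_k$ is already supplied by Propositions \ref{prop:K_1} and \ref{prop:K_2}, so nothing is missing.
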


\begin{proof}
By the universal property of the coproduct and the fact that $F(\L)$ is the prop for 
$\L$-actions, there exists a unique morphism of props
\[     \blacksquare \maps \Fin\Cospan + F(\L) \to \Lag\Rel_k \]
such that $\blacksquare \vert_{\Fin\Cospan} = K \circ H $ and $\phi \vert_{F(\L)}$ maps 
each $\ell \in \L$ to an arbitrarily chosen Lagrangian linear relation from $k \oplus k$ to itself. 
\end{proof}

We can apply this theorem to circuits made of resistors.   Any resistor has a \define{resistance} $R$, which is a positive real number.  Thus, if we take the label set $\L$ to be $\R^+$, we obtain a prop $\Circ_\L$ that models circuits made of resistors.  Electrical engineers typically draw a resistor as a wiggly line:
\[
\begin{tikzpicture}[circuit ee IEC, set resistor graphic=var resistor IEC graphic]
\node[contact] (C) at (0,2) {};
\node[contact] (D) at (2,2) {};
\node (A) at (-.75,2) {$(\phi_1,I_1)$};
\node (B) at (2.75,2) {$(\phi_2,I_2)$};
  \draw (0,2) to [resistor={info={$R$}}] ++(2,0);
\end{tikzpicture}
\]
Here $(\phi_1,I_1) \in \R \oplus \R$ are the \define{potential} and \define{current} at the resistor's input and $(\phi_2,I_2) \in \R \oplus \R$ are the potential and current at its output.  To define a black-boxing functor 
\[  \blacksquare \maps \Circ_\L \to \Fin\Rel_k   \]
we need to choose a linear relation between these four quantities for each choice of the
resistance $R$.   To do this, recall first that Kirchhoff's current law requires that the current flowing in equals the current flowing out: $I_1 = I_2$.  Second, Ohm's Law says that $V=RI$ where $I = I_1 = I_2$ is called the \define{current through} the resistor and $V = \phi_2 - \phi_1$ is called the \define{voltage across} the resistor.   Thus, for each $R \in \R^+$ we choose
\[ \blacksquare (R) = \{(\phi_1,I_1,\phi_2,I_2) : \; \phi_2-\phi_1 = R I_1, I_1 = I_2 \} .\]

We could stop here, but suppose we also want to include inductors and capacitors. 
An inductor comes with an inductance $L \in \R^+$ (not to be confused with our notation for a label set), while a capacitor comes with a capacitance $C \in \R^+$. These circuit elements are drawn as follows:
\[
\begin{tikzpicture}[circuit ee IEC, set resistor graphic=var resistor IEC graphic]
\node[contact] (C) at (0,2) {};
\node[contact] (D) at (2,2) {};
\node (A) at (-.75,2) {$(\phi_1,I_1)$};
\node (B) at (2.75,2) {$(\phi_2,I_2)$};
  \draw (0,2) to [inductor={info={$L$}}] ++(2,0);
\end{tikzpicture}
\]
\[
\begin{tikzpicture}[circuit ee IEC, set resistor graphic=var resistor IEC graphic]
\node[contact] (C) at (0,2) {};
\node[contact] (D) at (2,2) {};
\node (A) at (-.75,2) {$(\phi_1,I_1)$};
\node (B) at (2.75,2) {$(\phi_2,I_2)$};
  \draw (0,2) to [capacitor={info={$C$}}] ++(2,0);
\end{tikzpicture}
\]
These circuit elements apply to time-dependent currents and voltages, and they 
impose the relations $V = L \dot{I}$ and $I = C \dot{V}$, where the dot stands
for the time derivative.  Engineers deal with this using the Laplace transform.  
As explained in detail elsewhere \cite{BE,BF}, this comes down to adjoining a variable $s$ to the field $\mathbb{R}$ and letting $k = \R(s)$ be the field of rational functions in one real variable.   The variable $s$ has the meaning of a time derivative.   We henceforth use $I$ and $V$ to denote the Laplace transforms of current and voltage, respectively, and obtain the relations $V = sLI$ for the inductor, $I = sCV$ for the capacitor, and $V = RI$ for the resistor.   Thus if we extend our label set to the disjoint union of three copies of $\R^+$, defining
\[  RLC = \R^+ + \R^+ + \R^+ ,\]
we obtain a prop $\RLCCirc$ that describes circuits of resistors, inductors and capacitors.  The name `$\RLCCirc$' is a bit of a pun, since electrical engineers 
call a circuit made of one resistor, one inductor and one capacitor an `$RLC$ circuit'.

To construct the black-boxing functor
\[   \blacksquare \maps \RLCCirc \to \Fin\Rel_k \]
we specify it separately on each kind of circuit element.  Thus, on the first copy 
of $\R^+$, corresponding to resistors, we set
\[ \blacksquare (R) = \{(\phi_1,I_1,\phi_2,I_2) : \;\phi_2-\phi_1 = R I_1, \;  I_1 = I_2\}\]
as before.  On the second copy we set
\[ \blacksquare (L) = \{(\phi_1,I_1,\phi_2,I_2) : \; \phi_2-\phi_1 = s L I_1, \;  I_1 = I_2\}\]
and on the third we set
\[ \blacksquare (C) = \{(\phi_1,I_1,\phi_2,I_2) : \; sC(\phi_2-\phi_1) = I_1, \; \; I_1 = I_2\}  .\]
We have:

\begin{proposition}
\label{prop:black-boxing_1}
If $f \maps m \to n$ is any morphism in $\RLCCirc$, the linear relation $\blacksquare(f): (k \oplus k)^m\asrelto (k \oplus k)^n$ is Lagrangian.  We thus
obtain a morphism of props
\[   \blacksquare \maps \RLCCirc \to \Lag\Rel_k  \]
where $k = \R(s)$.
\end{proposition}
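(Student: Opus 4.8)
The plan is to reduce the statement to Theorem~\ref{thm:black-boxing_1}, using the standard criterion that a subspace of a symplectic vector space is Lagrangian exactly when it is isotropic and has precisely half the total dimension. Since $\RLCCirc = \Circ_\L$ for the label set $\L = RLC = \R^+ + \R^+ + \R^+$, Theorem~\ref{thm:black-boxing_1} already provides a unique morphism of props from $\Circ_\L \cong \Fin\Cospan + F(\L)$ to $\Lag\Rel_k$ whose restriction to $\Fin\Cospan$ is $K\circ H$ and which sends each $\ell \in \L$ to any prescribed Lagrangian relation from $k\oplus k$ to itself. So all that really has to be checked is that the three relations $\blacksquare(R)$, $\blacksquare(L)$, $\blacksquare(C)$ written down in this section are in fact Lagrangian subspaces of $\overline{k\oplus k} \oplus (k\oplus k)$, where $k = \R(s)$; once that is done, the morphism produced by the theorem coincides with the element-by-element description of $\blacksquare$ given in the text.

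To carry this out I would first record the symplectic form on $\overline{k\oplus k}\oplus(k\oplus k)$ in coordinates $(\phi_1,I_1,\phi_2,I_2)$, namely
\[
   \Omega\big((\phi_1,I_1,\phi_2,I_2),(\phi_1',I_1',\phi_2',I_2')\big) = (\phi_2 I_2' - \phi_2' I_2) - (\phi_1 I_1' - \phi_1' I_1),
\]
obtained from the standard structure on $k\oplus k$ on the target and its conjugate on the source. Each of $\blacksquare(R)$, $\blacksquare(L)$, $\blacksquare(C)$ is cut out of this four-dimensional space by two linearly independent linear equations (using that $sL, sC \neq 0$ in $\R(s)$), hence has dimension $2 = \tfrac12\dim\big((k\oplus k)^2\big)$; so only isotropy remains. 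For the resistor a typical element is $(\phi_1, I_1, \phi_1 + RI_1, I_1)$, and substituting two such elements into $\Omega$ collapses to $R(I_1 I_1' - I_1' I_1) = 0$. The inductor case is identical with $R$ replaced by $sL$, and for the capacitor I would parametrize by $\phi_1, \phi_2$ with $I_1 = I_2 = sC(\phi_2 - \phi_1)$, so that $\Omega$ becomes $sC\big((\phi_2 - \phi_1)(\phi_2' - \phi_1') - (\phi_2' - \phi_1')(\phi_2 - \phi_1)\big) = 0$. Thus all three generators of the $F(\L)$ summand go to Lagrangian relations.

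It then remains only to assemble these facts, exactly as in the proof of Proposition~\ref{prop:K_1}: $\Lag\Rel_k$ is a symmetric monoidal subcategory of $\Fin\Rel_k$ once the object $n$ of $\Lag\Rel_k$ is identified with the object $2n$ of $\Fin\Rel_k$, composites and direct sums of Lagrangian relations are Lagrangian, and the restriction of $\blacksquare$ to $\Fin\Cospan$ is $K\circ H$, which lands in $\Lag\Rel_k$ by Propositions~\ref{prop:fincospan_to_fincorel} and~\ref{prop:K_2}. Combining this with the generator computations and invoking Theorem~\ref{thm:black-boxing_1}—equivalently, the universal property of the coproduct $\Fin\Cospan + F(\L)$ together with the fact that $F(\L)$ is the prop for $\L$-actions—yields the desired morphism of props $\blacksquare \maps \RLCCirc \to \Lag\Rel_k$.

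I do not expect a genuine obstacle here: the isotropy computations are short and mechanical, and the rest is formal. The one point requiring care is the dimension-and-object bookkeeping—one must track the same identification of the object $n$ of $\Lag\Rel_k$ with the object $2n$ of $\Fin\Rel_k$ that appears in the proof of Proposition~\ref{prop:K_1}, so that the isotropic, half-dimensional subspaces exhibited above are correctly read as Lagrangian \emph{relations} $k\oplus k \asrelto k\oplus k$ rather than merely as Lagrangian subspaces of $(k\oplus k)^2$.
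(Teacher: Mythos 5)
Your proposal is correct and follows the same route as the paper: reduce to Theorem \ref{thm:black-boxing_1} and verify that $\blacksquare(R)$, $\blacksquare(L)$, $\blacksquare(C)$ are $2$-dimensional isotropic subspaces of $\overline{(k\oplus k)}\oplus(k\oplus k)$. The paper leaves those verifications to the reader, whereas you have written them out explicitly; the computations are right.
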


\begin{proof}
By Theorem \ref{thm:black-boxing_1} it suffices to check that 
the linear relations $\blacksquare(R), \blacksquare(L)$ and $\blacksquare(C)$ are
Lagrangian for any $R,L,C \in \R^+$.  To do this, one can check that these relations are 2-dimensional isotropic subspaces of $\overline{(k \oplus k)} \oplus (k \oplus k)$.
\end{proof}

A similar result was proved by Baez and Fong \cite{BF,Fo2} using different methods: decorated cospan categories rather than props.  In their work, resistors, inductors and capacitors were subsumed in a mathematically more natural class of circuit elements.   We can do something similar here.   At the same time, we might as well generalize to an arbitrary field $k$ and work with the prop $\Circ_k$, meaning $\Circ_\L$ where the label set $\L$ is taken to be $k$.   

\begin{definition} We call a morphism in $\Circ_k$ a \define{linear circuit}.
\end{definition}

Engineers might instead call such a morphism a `passive' linear circuit \cite{BF}, but
we will never need any other kind.

\begin{theorem}
\label{thm:black-boxing_2}
For any field $k$ there exists a unique morphism of props
\[   \blacksquare \maps \Circ_k \cong \Fin\Cospan + F(k) \to \Lag\Rel_k \]
such that $\blacksquare \vert_{\Fin\Cospan}$ is the composite
\[    \Fin\Cospan \stackrel{H}{\longrightarrow} 
       \Fin\Corel \stackrel{K}{\longrightarrow} \Lag\Rel_k  \]
and for each \(Z \in k\), the linear Lagrangian relation
\[   \blacksquare(Z) \maps k \oplus k \asrelto k \oplus k \]
is given by
\[  \blacksquare (Z) =\{(\phi_1,I_1,\phi_2,I_2) : \; \phi_2-\phi_1 = Z I_1 ,\;  I_1 = I_2\}. \]
\end{theorem}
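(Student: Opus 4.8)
The plan is to deduce this theorem as an immediate special case of Theorem \ref{thm:black-boxing_1}, taking the label set $\L$ to be the field $k$ itself. Then $\Circ_k$ is by definition $\Circ_\L$ with $\L = k$, the isomorphism $\Circ_k \cong \Fin\Cospan + F(k)$ is exactly Proposition \ref{prop:lcirc_coproduct}, and Theorem \ref{thm:black-boxing_1} already furnishes, for any assignment of a Lagrangian linear relation from $k \oplus k$ to itself to each element of $\L$, a \emph{unique} morphism of props whose restriction to $\Fin\Cospan$ is the composite $KH$. So the only thing left to check is that the prescribed subspace
\[ \blacksquare(Z) = \{(\phi_1,I_1,\phi_2,I_2) :\; \phi_2-\phi_1 = Z I_1 ,\;  I_1 = I_2\} \subseteq \overline{(k \oplus k)} \oplus (k \oplus k) \]
really is a Lagrangian subspace; once that is verified, both existence and uniqueness follow from Theorem \ref{thm:black-boxing_1}.

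To check that $\blacksquare(Z)$ is Lagrangian I would use the criterion recalled just before the definition of $\Lag\Rel_k$: a subspace is Lagrangian if and only if it is isotropic and has dimension equal to half that of the ambient space. The ambient space $\overline{(k \oplus k)} \oplus (k \oplus k)$ is four-dimensional, while $\blacksquare(Z)$ is cut out by the two linearly independent equations $\phi_2 - \phi_1 = Z I_1$ and $I_1 = I_2$, so it is two-dimensional. For isotropy I would evaluate the symplectic form $\overline{\omega} \oplus \omega$ on two arbitrary elements of $\blacksquare(Z)$: writing $\omega((\phi,I),(\phi',I')) = \phi I' - \phi' I$ and $\overline{\omega} = -\omega$, a short substitution using $\phi_2 = \phi_1 + Z I_1$, $I_2 = I_1$ (and primed versions) collapses the term $\phi_2 I_2' - \phi_2' I_2$ to $\phi_1 I_1' - \phi_1' I_1$, which cancels against $-\big(\phi_1 I_1' - \phi_1' I_1\big)$. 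Hence the form vanishes identically on $\blacksquare(Z)$, so $\blacksquare(Z)$ is isotropic of half dimension, and therefore Lagrangian.

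There is no real obstacle here: the theorem is engineered to be a corollary of Theorem \ref{thm:black-boxing_1}, and all the substantive content — that $\Circ_\L$ is a coproduct, that $H$ and $K$ are morphisms of props, that $K$ takes values in $\Lag\Rel_k$ — has been established earlier. The only genuine computation is the isotropy check above, and the only conceptual point worth spelling out is that the freedom in Theorem \ref{thm:black-boxing_1} to send each generator $\ell \in \L$ to an arbitrary Lagrangian relation is precisely what lets us pin the value at each scalar $Z \in k$ to the prescribed `impedance' relation. I would also remark, by way of a sanity check, that specializing $Z = R$, $Z = sL$, and $Z = 1/(sC)$ recovers the resistor, inductor, and capacitor relations of Proposition \ref{prop:black-boxing_1}, so Theorem \ref{thm:black-boxing_2} subsumes the treatment of $\RLCCirc$.
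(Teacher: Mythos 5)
Your proposal is correct and follows essentially the same route as the paper: the paper's proof likewise reduces everything to Theorem \ref{thm:black-boxing_1} and then observes that $\blacksquare(Z)$ is a $2$-dimensional isotropic subspace of $\overline{(k \oplus k)} \oplus (k \oplus k)$, hence Lagrangian. Your explicit isotropy computation (the cancellation of $\phi_1 I_1' - \phi_1' I_1$ against its negative after substituting $\phi_2 = \phi_1 + Z I_1$, $I_2 = I_1$) just fills in the detail the paper leaves to the reader.
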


\begin{proof} 
By Theorem \ref{thm:black-boxing_1} it suffices to check that $\blacksquare(Z)$ is a linear Lagrangian relation for any \(Z \in k\).    This follows from the fact that $\blacksquare(Z)$ is a 2-dimensional isotropic subspace of $\overline{(k \oplus k)} \oplus (k \oplus k)$.
\end{proof}

In electrical engineering, $Z$ is called the `impedance': a circuit
element with one input and one output has \define{impedance} $Z$ if the voltage across it is $Z$ times the current through it.   Resistance is a special case of impedance.  In particular, an ideal conductive wire has impedance zero.  Mathematically, this fact is reflected in an inclusion of props
\[        \Circ \hookrightarrow \Circ_k  \]
that sends the generator $\ell$ in $\Circ \cong \Fin\Cospan + F(\{\ell\})$ to 
the generator $0 \in k$ in  $\Circ \cong \Fin\Cospan + F(k)$, while it is the identity
on $\Fin\Cospan$.    Black-boxing for linear circuits then extends black-boxing
as previously defined for circuits of ideal conductive wires.   That is, we have a commutative
triangle:
\begin{equation}
\label{eq:extension_of_black-boxing}
    \xymatrix{
        \Circ \phantom{ |} \ar[dr]^-{\blacksquare} \ar@{^{(}->}[d] &  \\
        \Circ_k  \phantom{ |}  \ar[r]_-{\blacksquare} & \Lag\Rel_k. }
\end{equation}

\section{Signal-flow diagrams and circuit diagrams}
\label{sec:signal}

Control theory is the branch of engineering that studies the behavior of `open' dynamical systems: that is, systems with inputs and outputs.  Control theorists have intensively studied \emph{linear} open dynamical systems, and they specify these using `signal-flow diagrams'.  We now know that signal-flow diagrams are a syntax for linear relations.  In other words, we can see signal-flow diagrams as morphisms in a free prop that maps onto the prop of linear relations, $\Fin\Rel_k$.   This is a nice example of functorial semantics drawn from engineering.

The machinery of props lets us map circuit diagrams to signal-flow diagrams in a manner compatible with composition, addressing a problem raised by Willems \cite{Willems}.  To do this, because circuits as we have defined them obey some nontrivial equations, while the prop of signal-flow diagrams is free, we need to introduce free props that map onto $\Circ$ and $\Circ_k$.  

In Section \ref{sec:linear_relations} we discussed this presentation of $\Fin\Rel_k$: 
\[ \xymatrix{
F(E_k) \ar@<-.5ex>[r]_-{\rho_k} \ar@<.5ex>[r]^-{\lambda_k} & F(\Sigma) + F(\Sigma) + F(k) \ar[r]^-{\square} & \Fin\Rel_k. }
 \]
The morphisms in the free prop $F(\Sigma) + F(\Sigma) + F(k)$ can be drawn as string diagrams, and these roughly match what control theorists call signal flow diagrams.  
So, we make the following definition:

\begin{definition} Define \define{$\SigFlow_k$} to be $F(\Sigma) + F(\Sigma) + F(k)$. We call a morphism in $\SigFlow_k$ a \define{signal-flow diagram}.
\end{definition}

The prop $\SigFlow_k$ is free on eight generators together with one generator for each element of  $k$.  The meaning of these generators is best understood in terms of the linear relations they are mapped to under $\square$.  We discussed those linear relations in Section \ref{sec:black-boxing_conductive}.   So, we give the generators of $\SigFlow_k$ the same names.  Erbele also drew pictures of them, loosely modeled after the notation in signal-flow diagrams \cite{BE,E}.  The generators of the first copy of $F(\Sigma)$ are then:
\begin{itemize}
\item \define{coduplication}, $\Delta^\dagger \maps 2 \to 1$
\[
  \xymatrix@1{
    \sigflowpiccodup{.15\textwidth} 
  }
\]
 \item \define{codeletion}, $!^\dagger \maps 0 \to 1$
\[
  \xymatrix@1{
    \;\;\quad\quad\sigflowpiccodel{.1\textwidth} 
  }
\]
\item \define{duplication}, $\Delta \maps 1 \to 2$
\[
  \xymatrix@1{
    \sigflowpicdup{.15\textwidth}\;\;\quad\quad 
  }
\]

\item \define{deletion}, $! \maps 1 \to 1$
\[
  \xymatrix@1{
    \sigflowpicdel{.1\textwidth} 
  }
\]
\end{itemize}
The generators of the second copy of $F(\Sigma)$ are:
\begin{itemize}
\item 
\define{addition}, $+ \maps 2 \to 1$
\[
  \xymatrix@1{
    \sigflowpicadd{.15\textwidth} 
  }
\]
\item \define{coaddition}, $+^\dagger \maps 1 \to 2$
\[
  \xymatrix@1{
    \sigflowpiccoadd{.15\textwidth} 
  }
\]
\item \define{zero}, $0 \maps 0 \to 1$ 
\[
  \xymatrix@1{
    \;\;\quad\quad \sigflowpiczero{.1\textwidth} 
  }
\]
\item \define{cozero}, $0^\dagger \maps 1 \to 0$
\[
  \xymatrix@1{
    \sigflowpiccozero{.1\textwidth} \quad\quad\;\;
  }
\]
\end{itemize}
The generators of $F(k)$ are:
\begin{itemize}
\item for each $c \in k$, \define{scalar multiplication}, $c \maps 1 \to 1$
\[
  \xymatrixrowsep{5pt}
  \xymatrix@1{
    \SigLabelpic{.15\textwidth}\\
  }
\]
\end{itemize}
\vskip -2.5em
Since $\SigFlow_k$ is a free prop, while $\Circ_k$ is not, there is no useful morphism of props from $\Circ_k$ to $\SigFlow_k$.   However, there is a free prop $\Ccirc_k$ having $\Circ_k$ as a quotient, and a morphism from this free prop to $\SigFlow_k$.  This morphism lifts the black-boxing functor described in Theorem \ref{thm:black-boxing_2} to a morphism between free props.   

\begin{definition}
For any set $\L$ define the prop \define{$\Ccirc_\L$} by
\[     \Ccirc_\L = F(\Sigma) + F(\L).  \]
Here $\L$ stands for the signature with one unary operation for each element of $\L$ while $\Sigma$ is the signature with elements $\mu \maps 2\to 1, \iota \maps 0 \to 1, \delta \maps 1 \to 2$ and $\epsilon \maps 1\to 0$.
\end{definition}

In Example \ref{ex:fincospan_presentation} we saw this presentation for $\Fin\Cospan$:
\[ \xymatrix{
F(E)\ar@<-.5ex>[r]_-{\rho} \ar@<.5ex>[r]^-{\lambda} & F(\Sigma) \ar[r] & \Fin\Cospan } \]
where the equations in $E$ are the laws for a special commutative Frobenius monoid.   In the proof of Proposition \ref{prop:lcirc_coproduct} we derived this presentation for $\Circ_\L$:
\[ \xymatrix{
F(E)\ar@<-.5ex>[r]_-{\iota \rho} \ar@<.5ex>[r]^-{\iota \lambda} & F(\Sigma) + F(\L) \ar[r] & \Fin\Cospan + F(\L) \cong \Circ_\L} \]
where $\iota$ is the inclusion of $F(\Sigma)$ in $F(\Sigma) + F(\L)$.  Since 
$F(\Sigma) + F(k) = \Ccirc_k$, we can rewrite this as 
\[ \xymatrix{
F(E)\ar@<-.5ex>[r]_-{\iota \rho} \ar@<.5ex>[r]^-{\iota \lambda} & \Ccirc_\L \ar[r] & \Circ_\L } \]
The last arrow here, which we call $P \maps \Ccirc_\L \to \Circ_\L$, imposes the laws of a special commutative Frobenius monoid on the object $1$.  

The most important case of this construction is when $\L$ is some field $k$:

\begin{theorem}
\label{thm:black-and-white-boxing}
For any field $k$, there is a strict symmetric monoidal functor $T \maps \Ccirc_k \to \SigFlow_k$
giving a commutative square of strict symmetric monoidal functors
\[
    \xymatrix{
      \Ccirc_k \phantom{ |} \ar[r]^-{P} \ar[d]_{T} & \Circ_k \phantom{ |} \ar[rr]^-{\blacksquare} & & \Lag\Rel_k \phantom{ |} \ar@{^{(}->}[d] \\
      \SigFlow_k \phantom{ |}\ar[rrr]^-{\square} & & & \Fin\Rel_k. \phantom{ |}
      }
\]
\end{theorem}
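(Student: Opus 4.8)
The plan is to exploit the fact that $\Ccirc_k = F(\Sigma) + F(k) \cong F(\Sigma + k)$ is a \emph{free} prop. By Proposition~\ref{prop:monadic} and the description of algebras of a free prop, a strict symmetric monoidal functor out of $\Ccirc_k$ is specified by a choice of image for the generating object $1$ together with a choice of image for each generator $\mu\maps 2\to 1$, $\iota\maps 0\to 1$, $\delta\maps 1\to 2$, $\epsilon\maps 1\to 0$, and $Z\maps 1\to 1$ for each $Z\in k$. Note that $T$ need not be (indeed, for dimension reasons cannot be) a morphism of props: I would set $T(1)=2\in\SigFlow_k$, so $T(n)=2n$, matching the fact that black-boxing sends $1\in\Circ_k$ to $k\oplus k$.

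First I would define $T$ on generators so that it visibly computes the correct linear relations. Recall from Section~\ref{sec:black-boxing_conductive} that the first copy of $F(\Sigma)$ inside $\SigFlow_k$ carries the duplicative structure $(\Delta^\dagger,!,\Delta,!^\dagger)$ and the second the additive structure $(+,0,+^\dagger,0^\dagger)$, and that black-boxing treats the first coordinate of $k\oplus k$ as a potential $\phi$ (governed by the duplicative structure) and the second as a current $I$ (governed by the additive structure). Accordingly $T(\mu)$ is the signal-flow diagram that coduplicates the two incoming potentials and adds the two incoming currents, preceded by the braiding that interleaves $(\phi_1,I_1,\phi_2,I_2)$ into $(\phi_1,\phi_2,I_1,I_2)$; dually $T(\delta)$ duplicates the potential and coadds the current; $T(\iota)$ is $!^\dagger$ on the potential wire tensored with the zero constant on the current wire; and $T(\epsilon)$ is $!$ on the potential wire tensored with $0^\dagger$ on the current wire. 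For the impedance generator $Z$, the target relation is $\blacksquare(Z)=\{(\phi_1,I_1,\phi_2,I_2):\phi_2-\phi_1=ZI_1,\ I_1=I_2\}$ of Theorem~\ref{thm:black-boxing_2}, so I would let $T(Z)$ be the diagram that duplicates $I_1$, passes one copy to the output as $I_2$, scales the other copy by $Z$, and adds the result to $\phi_1$ to produce $\phi_2$. (Alternatively, since $\square$ is by construction a coequalizer and hence surjective on each homset, one may simply pick any preimage of $\blacksquare(Z)$ under $\square$.)

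Having fixed $T$, commutativity of the square is the equality of two strict symmetric monoidal functors $\Ccirc_k\to\Fin\Rel_k$: the composite $\square\circ T$ and the composite $\Ccirc_k\xrightarrow{P}\Circ_k\xrightarrow{\blacksquare}\Lag\Rel_k\hookrightarrow\Fin\Rel_k$. Both send $1$ to $2=k\oplus k$, so by freeness of $\Ccirc_k$ it suffices to check agreement on the five families of generators. Along the right-hand composite, $P$ sends $\mu,\iota,\delta,\epsilon$ to the corresponding special commutative Frobenius monoid morphisms of $\Circ_k$ and sends $Z$ to the impedance-$Z$ element, and black-boxing these yields exactly the relations displayed in Section~\ref{sec:black-boxing_conductive} (equivalently, those listed for $K$ in Proposition~\ref{prop:K_1}) and the relation $\blacksquare(Z)$ of Theorem~\ref{thm:black-boxing_2}. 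Along the left-hand composite, $\square$ evaluates each chosen signal-flow diagram to the linear relation it denotes; using the equations of the presentation of $\Fin\Rel_k$ (items 1--4 of Section~\ref{sec:linear_relations}: the two Frobenius structures, the two bimonoid laws, and the behaviour of the scalars), one checks that $\square(T(\mu))$, $\square(T(\iota))$, $\square(T(\delta))$, $\square(T(\epsilon))$ and $\square(T(Z))$ are the same relations. Hence the functors agree on generators, so they agree everywhere.

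The construction is essentially forced, so the only substantial work is the bookkeeping in the last step: tracking which copy of $F(\Sigma)$ lives on the potential wire and which on the current wire, inserting the braidings that reconcile the coordinate order $(\phi,I,\phi,I,\dots)$ used by black-boxing with the order $(\phi,\phi,\dots,I,I,\dots)$ in which one naturally tensors a duplicative diagram with an additive one, and — the one genuinely nonroutine point — producing an explicit signal-flow diagram realizing the impedance element $Z$ (or appealing to surjectivity of $\square$). None of this is deep, but it is where a careful proof must spend its effort.
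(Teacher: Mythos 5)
Your proposal is correct and follows essentially the same route as the paper: define $T$ on the object $1$ (sending it to $2$) and on the generators $\mu,\iota,\delta,\epsilon,Z$ by explicit signal-flow diagrams realizing the potential/current splitting, invoke freeness of $\Ccirc_k$, and verify commutativity of the square generator by generator against the relations of Proposition \ref{prop:K_1} and Theorem \ref{thm:black-boxing_2}. The one detail the paper spells out that you elide is how the universal property of the free prop yields a functor that is not the identity on objects: it first produces a morphism of props into the full subcategory $\SigFlow_k^{\mathrm{ev}}$ of even objects (relabelled so as to be a prop) and then composes with the inclusion back into $\SigFlow_k$.
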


\noindent
The horizontal arrows in this diagram are morphisms of props.  The vertical ones are not, because they send the object $1$ to the object $2$.

\begin{proof}
We define the strict symmetric monoidal functor $T \maps \Ccirc_k \to \SigFlow_k$
as follows.  It sends the object $1$ to $2$ and has the following action on the generating morphisms of $\Ccirc_k = F(\Sigma) + F(k)$:
\[
  \xymatrixrowsep{5pt}
  \xymatrix@1{
  T \maps \mu \;\; \ar@{|->}@<.25ex>[r] & \SigMultpic{.20\textwidth}  \\
 T \maps \iota \;\; \ar@{|->}@<.25ex>[r] & \quad\quad\quad\quad \;\SigUnitpic{.10\textwidth} \\
 T \maps \delta \;\; \ar@{|->}@<.25ex>[r] & \SigCoMultpic{.20\textwidth}  \\
 T \maps \epsilon \;\; \ar@{|->}@<.25ex>[r] & \SigCoUnitpic{.10\textwidth} \quad\quad \quad\quad\quad \\
  }
\]
and for each element $Z \in k$, 
\[
  \xymatrixrowsep{5pt}
  \xymatrix@1{
   T \maps Z \;\; \ar@{|->}@<.25ex>[r] & \quad \SigLabelEdgepic{.32\textwidth}\\
  }
\]
where we use string diagram notation for morphisms in $\SigFlow_k$.    To check that $T$ with these properties exists and is unique, let $\SigFlow_k^{\mathrm{ev}}$ be the full subcategory of $\SigFlow_k$ whose objects are even natural numbers.  This becomes a prop if we rename each object $2n$, calling it $n$.  Then, since $\Ccirc_k$ is free, there exists a unique morphism of props $T \maps \Ccirc _k\to \SigFlow_k^{\mathrm{ev}}$ defined on generators as above.   Since $\SigFlow_k^{\mathrm{ev}}$ is a symmetric monoidal subcategory of $\SigFlow_k$, we can reinterpret $T$ as a strict symmetric monoidal functor $T \maps \Ccirc_k \to \SigFlow_k$, and this too is uniquely determined by its action on the generators.

To prove that the square in the statement of the theorem commutes, it suffices to check it on the generators of $\Ccirc_k$.   For this we use the properties of black-boxing stated in Theorem \ref{thm:black-boxing_2}.   First, note that these morphisms in $\SigFlow_k$:
\[
  \xymatrixrowsep{5pt}
  \xymatrix@1{
  T(\mu) \;\; & =& \SigMultpic{.20\textwidth}  \\
 T(\iota) \;\; &=& \quad\quad\quad\quad \;\SigUnitpic{.10\textwidth} \\
 T(\delta) \;\; &=& \SigCoMultpic{.20\textwidth}  \\
 T(\epsilon) \;\; &=& \SigCoUnitpic{.10\textwidth} \quad\quad \quad\quad\quad \\
  }
\]
are mapped by $\square$ to the same multiplication, unit, comultiplication and counit
on $k \oplus k$ as given by $\blacksquare(\mu), \blacksquare(\iota), \blacksquare(\delta)$
and $\blacksquare(\epsilon)$.   Namely, these four linear relations make $k \oplus k$ into Frobenius monoid where the first copy of $k$ has the duplicative Frobenius structure and the second copy has the additive Frobenius structure.  Second, note that the morphism
\[   T(Z) = \SigLabelEdgepic{.32\textwidth}  \]
in $\SigFlow_k$ is mapped by $\square$ to the linear relation
\[   \{(\phi_1,I_1,\phi_2,I_2) : \; \phi_2-\phi_1 = Z I_1 ,\;  I_1 = I_2\}, \]
while Theorem \ref{thm:black-boxing_2} states that $\blacksquare(Z)$ is the same
relation, viewed as a Lagrangian linear relation.
 \end{proof}

Recall that when $\L = \{\ell\}$, we call $\Circ_\L$ simply $\Circ$.    We have seen that the map $\{\ell\} \to k$ sending $\ell$ to $0$ induces a morphism of props $\Circ \hookrightarrow \Circ_k$, which expresses how circuits of ideal conductive wires are a special case of linear circuits.  We can define a morphism of props $\Ccirc \hookrightarrow \Ccirc_k$ in an analogous
way.  Due to the naturality of the above construction, we obtain a commutative square
\[
    \xymatrix{
     \Ccirc \phantom{ |} \ar[r]^-{P} \ar@{^{(}->}[d] & \Circ \phantom{ |} \ar@{^{(}->}[d]  \\
      \Ccirc_k \phantom{ |} \ar[r]^-{P} & \Circ_k \phantom{ |} 
      }
\]
We can combine this with the commutative square in Theorem \ref{thm:black-and-white-boxing} and the commutative triangle in Equation \ref{eq:extension_of_black-boxing}, which expands to a square when we use the definition of black-boxing for circuits of ideal conductive wires.  The resulting diagram summarizes the relationship between linear circuits, cospans, corelations, and signal-flow diagrams:
\[
    \xymatrix{
     \Ccirc \phantom{ |} \ar[r]^-{P} \ar@{^{(}->}[d] & \Circ \phantom{ |} \ar@{^{(}->}[d] \ar[r]^-{G} & \Fin\Cospan \phantom{ |} \ar[r]^-{H} &
      \Fin\Corel \phantom{ |} \ar[d]^{K} \\
      \Ccirc_k \phantom{ |} \ar[r]^-{P} \ar[d]_{T} & \Circ_k \phantom{ |} \ar[rr]^-{\blacksquare} & & \Lag\Rel_k \phantom{ |} \ar@{^{(}->}[d] \\
      \SigFlow_k \phantom{ |}\ar[rrr]^-{\square} & & & \Fin\Rel_k. \phantom{ |}
      }
\]

In conclusion, we warn the reader that Erbele \cite[Defn.\ 21]{E} uses a different definition of $\SigFlow_k$.  His prop with this name is free on the following generators:
\begin{itemize}
\item addition, $+ \maps 2 \to 1$
\item zero, $0 \maps 0 \to 1$
\item duplication, $\Delta \maps 1 \to 2$
\item deletion, $! \maps 1 \to 0$
\item for each $c \in k$, scalar multiplication $c \maps 1 \to 1$
\item the cup, $\cup \maps 2 \to 0$
\item the cap, $\cap \maps 0 \to 2$
\end{itemize}
The main advantage is that string diagrams for morphisms in his prop more closely resemble the signal-flow diagrams actually drawn by control theorists; however, see his discussion of some subtleties.  All the results above can easily be adapted to Erbele's definition.  

\section{Voltage and current sources}
\label{sec:affine}

In our previous work on electrical circuits \cite{BF}, batteries were not included.  Resistors, capacitors, and inductors define linear relations between potential and current.   Batteries, also known as `voltage sources', define \emph{affine} relations between these quantities.  The same is true of current sources.    Thus, to handle these additional circuit elements, we need a black-boxing functor that takes values in a different prop.  The ease with which we can do this illustrates the flexibility of working with props. In what follows, we continue to work over an arbitrary field $k$, which in electrical engineering is either $\R$ or $\R(s)$.

A voltage source is typically drawn as follows:
\[
\begin{tikzpicture}[circuit ee IEC, set resistor graphic=var resistor IEC graphic]
\node[contact] (C) at (0,2) {};
\node[contact] (D) at (2,2) {};
\node (A) at (-.75,2) {$(\phi_1,I_1)$};
\node (B) at (2.75,2) {$(\phi_2,I_2)$};
  \draw (0,2) to [battery={info={$V$}}] ++(2,0);
\end{tikzpicture}
\]
It sets the difference between the output and input potentials to a
constant value $V \in k$.    Thus, to define a black-boxing functor 
for voltage sources, we want to set
\[  \blacksquare(V) =  \{(\phi_1,I_1,\phi_2,I_2) : \; \phi_2-\phi_1 = V, \;  I_1 = I_2 \} .\]
Similarly, a current source is drawn as
\[
\begin{tikzpicture}[circuit ee IEC, set resistor graphic=var resistor IEC graphic]
\node[contact] (C) at (0,2) {};
\node[contact] (D) at (2,2) {};
\node (A) at (-.75,2) {$(\phi_1,I_1)$};
\node (B) at (2.75,2) {$(\phi_2,I_2)$};
  \draw (0,2) to [current source={info={$I$}}] ++(2,0);
\end{tikzpicture}
\]
and it fixes the current at both input and output to a constant value $I$, giving
this relation:
\[  \blacksquare(I) = \{(\phi_1,I_1,\phi_2,I_2) : \;  I_1 = I_2 = I \} .\]

We could define a black-boxing functor suitable for voltage and current sources
by using a prop where the morphisms $f \maps m \to n$ are arbitrary relations
from $(k \oplus k)^m$ to $(k \oplus k)^n$.  However, the relations shown above 
are better than average.  First, they are `affine relations': that is, translates of linear
subspaces of $(k \oplus k)^m \oplus (k \oplus k)^n$.  For voltage sources we have
\[    \blacksquare(V)  = (0,0,V,0) +  \{(\phi_1,I_1,\phi_1,I_1) \}  \]
and for current sources we have
\[    \blacksquare(I) =  (I,0,I,0) +\{(\phi_1,0,\phi_2,0) \} .\]
Second, these affine relations are `Lagrangian': that is, they are translates of
Lagrangian linear relations.    

Thus, we proceed as follows:

\begin{definition}  
Given symplectic vector spaces $(V,\omega)$ and $(V',\omega')$,
an \define{Lagrangian affine relation} $R \maps V \asrelto V'$ is an affine subspace
$R \subseteq \overline{V} \oplus V'$ that is also a Lagrangian subvariety of 
$\overline{V} \oplus V'$.
\end{definition}

Here recall that a subset $A$ of a vector space is said to be an 
\define{affine subspace} if it is closed under affine linear combinations: 
if $a,a' \in A$ then so is $t a + (1-t)a'$ for all $t \in \R$.   A subvariety
$R \subseteq \overline{V} \oplus V'$ is said to be \define{Lagrangian} if each 
of its tangent spaces, when identified with a linear subspace of
$\overline{V} \oplus V'$, is Lagrangian.   If $R$ is an affine subspace of 
$\overline{V} \oplus V'$ it is automatically a subvariety, and it is either empty or 
a translate $L + (v,v')$ of some linear subspace $L \subseteq \overline{V} \oplus V'$.
In the latter case all its tangent spaces become the same when identified with linear subspaces of $\overline{V} \oplus V'$: they are all simply $L$.   We thus have:

\begin{proposition}
Given symplectic vector spaces $(V,\omega)$ and $(V',\omega')$, any Lagrangian affine relation $R \maps V \asrelto V'$ is either empty or a translate of a Lagrangian linear relation.
\end{proposition}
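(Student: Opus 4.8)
The plan is to dispose of the empty case trivially, and in the nonempty case to write $R$ explicitly as a coset of its direction subspace and then argue that that subspace is Lagrangian; this makes precise the remark in the paragraph preceding the statement.

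If $R = \emptyset$ there is nothing to prove, so assume $R$ contains a point $p_0 = (v_0,v_0') \in \overline{V} \oplus V'$. Set
\[ L \;=\; \{\, r - p_0 \;:\; r \in R \,\} \;\subseteq\; \overline{V} \oplus V'. \]
Since $R$ is an affine subspace (closed under affine combinations) containing $p_0$, the set $L$ contains $0$ and is closed under addition and scalar multiplication, hence is a linear subspace; and by construction $R = L + p_0$, so $R$ is a translate of $L$.

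It remains to see that $L$ is a Lagrangian linear relation $L \maps V \asrelto V'$, i.e.\ a Lagrangian subspace of $\overline{V} \oplus V'$. Because $R = L + p_0$ is a translate of a linear subspace, it is a subvariety of $\overline{V} \oplus V'$ (as already noted above), and at every point $r \in R$ its tangent space --- identified with a linear subspace of the ambient $\overline{V} \oplus V'$ --- is exactly $L$, independently of $r$. By hypothesis $R$ is a Lagrangian subvariety, so each of these tangent subspaces is Lagrangian; since they all equal $L$, the subspace $L$ is Lagrangian. Therefore $R = L + p_0$ is a translate of the Lagrangian linear relation $L$, as claimed. The one step worth stating carefully --- and the only place the argument could go wrong --- is the uniform identification of $T_r R$ with the direction subspace $L$ for all $r$, which is what lets ``$R$ is a Lagrangian subvariety'' force $L$ itself (and not some base-point-dependent substitute) to be Lagrangian; note that we deduce, rather than assume, that $L$ is Lagrangian.
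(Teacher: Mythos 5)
Your proof is correct and follows essentially the same route as the paper, which proves this proposition in the paragraph immediately preceding its statement: write the nonempty affine subspace as a translate $L + p_0$ of a linear subspace, observe that every tangent space of $R$ is identified with $L$, and conclude from the Lagrangian-subvariety hypothesis that $L$ itself is Lagrangian. Your write-up just makes the same steps slightly more explicit.
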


This allows us to construct the following category:

\begin{proposition}
There is a category where the objects are symplectic vector spaces, the morphisms
are Lagrangian affine relations, and composition is the usual composition of relations.  This is a symmetric monoidal subcategory of the category of sets and relations with the symmetric monoidal structure coming from the cartesian product of sets.  
\end{proposition}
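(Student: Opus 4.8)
The plan is to exhibit the Lagrangian affine relations as a subcategory of $\Rel$ (sets and relations) that is closed under the cartesian-product monoidal structure. Once this closure is established, associativity, the unit laws, and all the symmetric-monoidal coherence axioms come for free, since they already hold in $\Rel$ and every morphism and structure isomorphism in question is literally the same map as in $\Rel$. So the real content is threefold: (i) identities are Lagrangian affine relations; (ii) the relational composite of two Lagrangian affine relations is again one; (iii) the product of two Lagrangian affine relations is again one. Throughout I would lean on the preceding proposition, which says a Lagrangian affine relation is either empty or a translate of a linear Lagrangian relation, together with the facts recalled in Section~\ref{sec:black-boxing_conductive} that linear Lagrangian relations are closed under composition and under direct sum.

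\textbf{Identities and composition.} First I would note that the diagonal $\Delta_V = \{(v,v): v\in V\}\subseteq \overline V\oplus V$ is a linear subspace of dimension $\tfrac{1}{2}\dim(\overline V\oplus V)$ on which $\overline\omega\oplus\omega$ vanishes, hence a linear Lagrangian relation and in particular a Lagrangian affine relation; this is $1_V$ in $\Rel$. The empty set is an affine subspace (vacuously) with no tangent spaces, so it is a Lagrangian affine relation as well, and we keep it as a legitimate morphism. For composition, given $R\maps U\asrelto V$ and $R'\maps V\asrelto W$ Lagrangian affine, I would first observe that $R'R$ is affine or empty: it is the image, under the linear projection $U\oplus V\oplus W\to U\oplus W$, of the affine subspace $(R\oplus W)\cap(U\oplus R')$, and affine subspaces are preserved by intersection and by linear images. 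If $R'R=\emptyset$ we are done. Otherwise I would pick $(u_0,w_0)\in R'R$ and an intermediate $v_0\in V$ with $(u_0,v_0)\in R$ and $(v_0,w_0)\in R'$; by the preceding proposition $R=(u_0,v_0)+L$ and $R'=(v_0,w_0)+L'$ with $L,L'$ linear Lagrangian, and a short translation computation gives $R'R=(u_0,w_0)+L'L$. Since $L'L$ is linear Lagrangian, $R'R$ is a translate of a linear Lagrangian relation, hence (by the tangent-space discussion preceding the last proposition) a Lagrangian affine relation.

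\textbf{Monoidal structure.} On objects, the underlying set of the cartesian product of $(V_1,\omega_1)$ and $(V_2,\omega_2)$ carries the direct-sum symplectic structure $\omega_1\oplus\omega_2$, so it is again a symplectic vector space, and the unit object is the zero-dimensional symplectic space. On morphisms, under the canonical identification $\overline{V_1\oplus V_2}\oplus(V_1'\oplus V_2')\cong(\overline{V_1}\oplus V_1')\oplus(\overline{V_2}\oplus V_2')$, the $\Rel$-tensor of $R_1\maps V_1\asrelto V_1'$ and $R_2\maps V_2\asrelto V_2'$ is $R_1\oplus R_2$. If either factor is empty, so is the product; otherwise, writing $R_i=(a_i,b_i)+L_i$, we get $R_1\oplus R_2=(a_1,a_2,b_1,b_2)+(L_1\oplus L_2)$ with $L_1\oplus L_2$ linear Lagrangian by the direct-sum fact, hence again Lagrangian affine. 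Finally, the braiding, associator, and unitors of $\Rel$ are graphs of linear symplectomorphisms, hence linear Lagrangian (and so Lagrangian affine) relations lying in the subcategory, and coherence is inherited from $\Rel$. This gives the claimed symmetric monoidal subcategory.

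\textbf{Main obstacle.} The only step with genuine (albeit mild) content is the composition step: one must reduce the composite to the linear case by choosing a point of $R'R$ and an intermediate witness, identify the composite as $(u_0,w_0)+L'L$, and then invoke closure of linear Lagrangian relations under composition. Everything else is bookkeeping about affine subspaces, the empty relation, and transport of the $\Rel$ structure.
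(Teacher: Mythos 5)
Your proposal is correct and follows essentially the same route as the paper: the key step in both is to handle composition by picking a point $(u_0,w_0)$ of the composite together with an intermediate witness $v_0$, translating $R$ and $R'$ to linear Lagrangian relations $L$ and $L'$, identifying the composite as $(u_0,w_0)+L'L$, and then invoking closure of linear Lagrangian relations under composition and direct sum for the tensor product and braiding. The extra touches you add (the projection argument that the composite is affine or empty, and the explicit check of identities and the remaining coherence isomorphisms) are harmless refinements of the same argument rather than a different approach.
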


\begin{proof}
It suffices to check that morphisms are closed under composition and tensor product, and that the braiding is a Lagrangian affine relation. Let $R \maps U \asrelto V$ and $S \maps V \asrelto W$ be two Lagrangian affine relations. If the composite $S\circ R$ is empty then we are done, so suppose it is not.   In this case there exist $u\in U, v\in V, w\in W$ such that $(u,v)\in R, (v,w) \in S$, and $(u,w) \in S\circ R $.   Consider the subspaces $-(u,v) + R= L$ and $-(v,w) + S = M$. These are both affine subspaces containing the origin, so they are linear subspaces. Since $R$ and $S$ are Lagrangian affine subspaces, their translates $L$ and $M$ are Lagrangian linear relations from $U$ to $W$.   It follows that $M\circ L\maps V \asrelto W $ is a Lagrangian linear relation.  We claim that $S\circ R = (u,w) + M\circ L$, so that morphisms in our proposed category are closed under composition.  First write $R = L +(u,v)$ and $S = M + (v,w)$, so that
\begin{align*}
S\circ R &=  \{(x,z)  | \exists y \in V \text{ s.t. } (x,y) \in R \text{ and } (y,z) \in S\} \\
& = \{(x,z) | \exists y \in V \text{ s.t. } (x,y) \in L+(u,v) \text{ and } (y,z) \in M+ (v,w)\} \\[-3em]
& = \{(x,z) | \exists y \in V \text{ s.t. } 
	\begin{aligned}
                            \\ 
                             \\
          x= l_1 + u \\ 
          y = l_2 + v \\
          (l_1,l_2) \in L
        \end{aligned}
	\begin{aligned} \\
                              \\
            y = m_1 + v \\
           z = m_2 + w \\\hspace{2ex}
         (m_1,m_2) \in M\} \\
	\end{aligned}
\end{align*}
This gives $l_2 = m_1$, so finally we have
\begin{align*}
S \circ R & = \{(l_1+u, m_2+w) | \exists l_2 \in V \text{ s.t. } (l_1,l_2) \in L \text{ and } (l_2,m_2) \in M\}\\
&  = (u,w) + M\circ L
\end{align*}
as desired.

The tensor product of Lagrangian affine relations $R \maps U \asrelto V$ and $R'  \maps U' \asrelto V'$ is given by
\[  R \oplus R' = \{(u,u',v,v'):  \; (u,v) \in R, \; (u',v') \in R'\} \maps U \oplus U' \asrelto V \oplus V' ,\]
and this is a Lagrangian affine relation because it is a translate of a Lagrangian linear relation.  Finally, note that the braiding morphism $B_{U,V} \maps U\oplus V \asrelto V \oplus U$ defined by $ B_{U,V} = \{(u,v,v,u) |u\in U, v\in V\}$ is a Lagrangian linear relation and thus a Lagrangian affine relation.
\end{proof}

By Proposition \ref{prop:strictification_1}, the above symmetric monoidal category is equivalent to the following prop:

\begin{definition}
Let \define{$\Aff\Lag\Rel_k$} be the prop where a morphism from $m$ to $n$ is a 
Lagrangian affine relation from $(k \oplus k)^m$ to $(k \oplus k)^n$, composition
is the usual composition of relations, and the symmetric monoidal structure is given 
as above.
\end{definition}

We can extend the black-boxing functor from linear circuits to circuits that
include voltage and/or current sources.   The target of this extended black-boxing functor will be, not $\Lag\Rel_k$, but $\Aff\Lag\Rel_k$.  

\begin{theorem} 
\label{thm:black-boxing_3}
For any field $k$ and label set $\L$, there exists a unique morphism of props 
\[ \blacksquare \maps \Circ_\L \cong \Fin\Cospan + F(\L) \to \Aff\Lag\Rel_k \] 
such that $\blacksquare \vert_{\Fin\Cospan}$ is the composite
\[    
       \Fin\Cospan \stackrel{H}{\longrightarrow} 
       \Fin\Corel \stackrel{K}{\longrightarrow} \Lag\Rel_k \hookrightarrow \Aff\Lag\Rel_k \]
and $\blacksquare \vert_{F(\L)}$ maps each $\ell \in \L$ to an arbitrarily chosen
Lagrangian affine relation from $k \oplus k$ to itself.    
\end{theorem}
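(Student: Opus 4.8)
The plan is to imitate the proof of Theorem \ref{thm:black-boxing_1} almost verbatim, replacing the codomain $\Lag\Rel_k$ by $\Aff\Lag\Rel_k$. The two structural facts that make this work are already in hand: by Proposition \ref{prop:lcirc_coproduct} we have $\Circ_\L \cong \Fin\Cospan + F(\L)$, and by the discussion preceding that proposition, $F(\L)$ is the prop for $\L$-actions, so a morphism of props out of $F(\L)$ into any prop $\C$ is precisely a function $\L \to \C(1,1)$. Hence a morphism of props out of the coproduct $\Fin\Cospan + F(\L)$ is determined, uniquely, by a morphism of props out of $\Fin\Cospan$ together with an arbitrary function $\L \to \Aff\Lag\Rel_k(1,1)$, and this is exactly the data the theorem asks us to supply.

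First I would check that the prescribed restriction to $\Fin\Cospan$ really is a morphism of props into $\Aff\Lag\Rel_k$. This is immediate: $H \maps \Fin\Cospan \to \Fin\Corel$ is a morphism of props by Proposition \ref{prop:fincospan_to_fincorel}, $K \maps \Fin\Corel \to \Lag\Rel_k$ is a morphism of props by Proposition \ref{prop:K_2}, and the inclusion $\Lag\Rel_k \hookrightarrow \Aff\Lag\Rel_k$ is a morphism of props because every linear subspace is an affine subspace, so every Lagrangian linear relation is a Lagrangian affine relation, and composition, direct sum and the braiding agree on the nose; all of this was recorded when $\Aff\Lag\Rel_k$ was constructed. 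Since a composite of morphisms of props is again a morphism of props, the composite $\Fin\Cospan \stackrel{H}{\to} \Fin\Corel \stackrel{K}{\to} \Lag\Rel_k \hookrightarrow \Aff\Lag\Rel_k$ is a well-defined morphism of props, so $\blacksquare\vert_{\Fin\Cospan}$ makes sense as stated.

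Then I would invoke the universal property of the coproduct in $\PROP$, which exists since $\PROP$ is cocomplete (Corollary \ref{cor:presentation}). A morphism out of $\Fin\Cospan + F(\L)$ is the same as a compatible pair of morphisms out of the two summands; fixing $\blacksquare\vert_{\Fin\Cospan} = K H$ and choosing $\blacksquare\vert_{F(\L)}$ to send each $\ell \in \L$ to its designated Lagrangian affine relation $k \oplus k \asrelto k \oplus k$ (using that $F(\L)$ is free, equivalently the prop for $\L$-actions), there is exactly one morphism of props $\Fin\Cospan + F(\L) \to \Aff\Lag\Rel_k$ restricting to this pair. Transporting along the isomorphism $\Circ_\L \cong \Fin\Cospan + F(\L)$ of Proposition \ref{prop:lcirc_coproduct} yields the desired $\blacksquare \maps \Circ_\L \to \Aff\Lag\Rel_k$, with uniqueness inherited from the uniqueness in the universal property.

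There is essentially no obstacle here; the one point worth a moment's care — and it was already settled in the construction of $\Aff\Lag\Rel_k$ — is that the inclusion $\Lag\Rel_k \hookrightarrow \Aff\Lag\Rel_k$ is strict symmetric monoidal and the identity on objects, i.e.\ a genuine morphism of props, so that the composite defining $\blacksquare\vert_{\Fin\Cospan}$ lands in $\Aff\Lag\Rel_k$ as a morphism of props and not merely as a symmetric monoidal functor. Once that is noted, the argument is purely formal.
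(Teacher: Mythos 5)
Your proposal is correct and follows essentially the same route as the paper, which simply notes that the proof mimics that of Theorem \ref{thm:black-boxing_1}: the universal property of the coproduct $\Fin\Cospan + F(\L)$ together with the fact that $F(\L)$ is the prop for $\L$-actions gives existence and uniqueness at once. Your extra remark that the inclusion $\Lag\Rel_k \hookrightarrow \Aff\Lag\Rel_k$ is an identity-on-objects strict symmetric monoidal functor is a worthwhile explicit check that the paper leaves implicit.
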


\begin{proof}
The proof mimics that of Theorem \ref{thm:black-boxing_1}.
\end{proof}

Using the formulas given above for the relations between potentials and currents for voltage sources and current sources, we can use this theorem to define black-boxing functors for circuits that include these additional circuit elements.   A similar strategy can be used to define black-boxing for circuits containing other nonlinear circuit elements, such as transistors.  We merely need to expand the target of this black-boxing functor to include all the relations between potentials and currents that arise.

\appendix

\section{Props from symmetric monoidal categories}
\label{sec:symmoncats}

There is a 2-category $\Symm\Mon\Cat$ where:

\begin{itemize}
\item objects are symmetric monoidal categories,
\item morphisms are symmetric monoidal functors, and 
\item 2-morphisms are monoidal natural transformations.   
\end{itemize}
Here our default notions are the `weak' ones (which Mac Lane \cite{Ma13} calls `strong'),  where all laws hold up to coherent natural isomorphism.     Props, on the other hand, are \emph{strict} symmetric monoidal categories where every object is \emph{equal} to a natural number, and morphisms between them are \emph{strict} symmetric monoidal functors sending each object $n$ to itself.   Categorical structures found in nature are often weak.  Thus, to study them using props, one needs to `strictify' them.  Thanks to conversations with Steve Lack we can state the following results, which accomplish this strictification.   

The first question is when a symmetric monoidal category $\C$ is equivalent, as an object of $\Symm\Mon\Cat$, to a prop.    In other words: when is does there exist a prop $\T$ and symmetric monoidal functors $j \maps \T \to \C$, $k \maps \C \to \T$ together with monoidal natural isomorphisms $jk \cong 1_\C$ and $kj \cong 1_\T$?
This is answered by Proposition \ref{prop:strictification_1}:

\vskip 1em 
\noindent \textbf{Proposition 4.3.}  \textit{A symmetric monoidal category $\C$ is equivalent, as an object of $\Symm\Mon\Cat$, to a prop if and only if there is an object $x \in \C$ such that every object of $\C$ is isomorphic to the $n$th tensor power of $x$ for some $n \in \N$.}

\begin{proof}
The `only if' condition is obvious, so suppose that $\C$ is any symmetric monoidal category with $x \in \C$ such that every object of $\C$ is isomorphic to a tensor power of $x$.   We use a method due to A.\ J.\ Power, based on this lemma:

\begin{lemma}[Lemma 3.3, \cite{Po}]  
\label{lem:power} 1. Any functor $f \maps A \to B$ can be factored as $je$ where $e$ is bijective on objects and $j$ is fully faithful.  

2.  Given a square that commutes up to a natural isomorphism $\alpha$:
\[
  \xymatrix{
     A \ddrrtwocell<\omit>{<0>\alpha} \ar[rr]^h \ar[dd]_u && B \ar[dd]^v \\ 
    & &\\
    C \ar[rr]_{g} && D 
  }
\]
where $h$ is bijective on objects and $g$ is fully faithful, there exists a unique functor $w \maps B \to C$ and natural transformation $\beta \maps v \Rightarrow g w$ such that $wh = u$ and $\beta h = \alpha$.  Moreover $\beta$ is a natural isomorphism.
\end{lemma}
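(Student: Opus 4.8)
The plan is to prove the two parts in turn, treating the lemma as a (bicategorical) orthogonal factorization statement for the two classes of functors that are bijective on objects and fully faithful.

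For Part 1, I would construct the factorization explicitly. Given $f \maps A \to B$, define an intermediate category $C_0$ with the same objects as $A$, i.e.\ $\Ob(C_0) = \Ob(A)$, and with hom-sets $C_0(a,a') := B(f(a), f(a'))$, composition and identities inherited from $B$. Let $e \maps A \to C_0$ be the identity on objects and send $\phi \maps a \to a'$ to $f(\phi)$, regarded as an element of $C_0(a,a') = B(fa, fa')$; let $j \maps C_0 \to B$ send $a$ to $f(a)$ and act as the identity on each hom-set. Then $je = f$, the functor $e$ is bijective on objects by construction, and $j$ is fully faithful since $j \maps C_0(a,a') \to B(fa, fa')$ is the identity map. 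The only routine checks are that $e$ and $j$ are genuine functors, which hold because composition in $C_0$ is just composition in $B$.

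For Part 2, the key idea is that bijectivity on objects forces $w$ and the components of $\beta$, while full faithfulness of $g$ supplies $w$ on morphisms. First orient $\alpha$ as a natural isomorphism $\alpha \maps vh \To gu$, so that the required equation $\beta h = \alpha$ is type-correct. Since $h$ is a bijection on objects, every $b \in B$ equals $h(a)$ for a unique $a$; the condition $wh = u$ then forces $w(b) := u(h^{-1}(b))$ on objects, and $\beta h = \alpha$ forces the component $\beta_b := \alpha_{h^{-1}(b)}$, which is invertible because $\alpha$ is. To define $w$ on a morphism $\psi \maps b \to b'$ of $B$, I would use that $g$ is fully faithful: the morphism $\beta_{b'} \circ v(\psi) \circ \beta_b^{-1} \maps gw(b) \to gw(b')$ of $D$ has a unique preimage under $g$, and I set $w(\psi)$ to be that preimage, so that $g(w(\psi)) = \beta_{b'} \circ v(\psi) \circ \beta_b^{-1}$.

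It then remains to run the verifications, all of which reduce, via faithfulness of $g$, to identities already guaranteed by these definitions. Functoriality of $w$ follows by applying $g$: identities and composites match because the conjugating isomorphisms $\beta$ telescope. Naturality of $\beta$ is immediate, being exactly the defining equation of $w(\psi)$, and $\beta$ is a natural isomorphism since each $\beta_b$ is. The step I expect to be the main obstacle---indeed the only place where genuine care is needed---is checking that $wh = u$ holds \emph{strictly} on morphisms, not merely up to isomorphism: for $\phi \maps a \to a'$ one computes $g(w(h\phi)) = \alpha_{a'} \circ vh(\phi) \circ \alpha_a^{-1}$, and this equals $g(u\phi)$ precisely by the naturality square of $\alpha$, whence $w(h\phi) = u\phi$ by faithfulness. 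Finally, uniqueness is automatic because every choice above was forced: any $(w', \beta')$ meeting the conditions must agree with $(w,\beta)$ on objects and on the components of $\beta$ by the bijectivity-on-objects argument, and then on morphisms by faithfulness of $g$.
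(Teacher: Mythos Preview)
Your proposal is correct and complete. In the paper, this lemma is not proved at all: it is quoted verbatim from Power's paper and used as a black box in the proof of Proposition~4.3. So there is no ``paper's own proof'' to compare against; you have supplied the standard direct argument.

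For what it is worth, your argument is exactly the expected one. Part~1 is the full-image factorization, and Part~2 is the usual proof that (bijective-on-objects, fully faithful) behaves like an orthogonal factorization system in the bicategorical sense: objects and components of $\beta$ are pinned down by bijectivity-on-objects, morphisms of $w$ are pinned down by full faithfulness of $g$, and the one nontrivial check---that $wh = u$ holds strictly on morphisms---reduces via faithfulness of $g$ to the naturality square of $\alpha$, as you note. The orientation you chose for $\alpha$ is the right one to make $\beta h = \alpha$ type-check.
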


Let $\NN$ be the strict symmetric monoidal category with one object for each natural number and only identity morphisms, with the tensor product of objects being given by addition.  There exists a symmetric monoidal functor $f \maps \NN \to \C$ that sends the $n$th object of $\NN$ to $x^{\otimes n} = x \otimes (x \otimes (x \otimes \cdots))$.   By Part 1 of Lemma \ref{lem:power} we can factor $f$ as a composite
\[   \NN \stackrel{e}{\longrightarrow} \T \stackrel{j}{\longrightarrow} \C \]
where $e$ is bijective on objects and $j$ is fully faithful.     By our condition on $\C$, $f$ is essentially surjective.  It follows that $j$ is also essentially surjective, and thus an equivalence of categories.   We claim that $\T$ can be given the structure of a strict symmetric monoidal category making $j$ symmetric monoidal.    It will follow that $\T$ is a prop and $j \maps \T \to \C$ can be promoted to an equivalence in $\Symm\Mon\Cat$.

To prove the claim, we use the 2-monad $P$ on $\Cat$ whose strict algebras are strict symmetric monoidal categories.    For any category $\A$, $P(\A)$ is the `free strict symmetric monoidal category' on $\A$.  Explicitly, an object of $P(\A)$ consists of a finite list $(a_1\dots,a_n)$ of objects of $\A$.  A morphism from $(a_1,\dots,a_n)$ to $(b_1,\dots b_m)$ exists only if $n=m$, in which case it consists of a permutation $\sigma \in S_n$ and a morphism from $a_i$ to $b_{\sigma(i)}$ in $\A$ for each $i$.    For the rest of the 2-monad structure see for example \cite[Sec.\ 4.1]{FGHW}.  Any symmetric monoidal category can be made into a pseudoalgebra of $P$, and then the pseudomorphisms between such pseudoalgebras are the symmetric monoidal functors.   

Power's method \cite{Po} applies to 2-monads that preserve the class of functors that are bijective on objects.  The 2-monad $P$ has this property.

We can take $\NN$ above to be $P(1)$, since they are isomorphic.    Any object $x \in \C$ determines a functor $1 \to \C$ and so a pseudomorphism $F \maps P(1) \to \C$.    Replacing $F$ by an equivalent pseudomorphism if necessary, we can assume $F(n) = x^{\otimes n}$, so the situation in the first paragraph holds with this choice of $F$, and we can factor $F$ as $P(1) \stackrel{e}{\to} \T \stackrel{j}{\to} \C$ as before.  Since $F$ is a pseudomorphism, this square commutes up to a natural isomorphism $\alpha$:
\[
  \xymatrix{
     P(P(1)) \ddrrtwocell<\omit>{<0>\alpha} \ar[rr]^{P(e)} \ar[d]_{m_1} && P(\T) \ar[d]^{P(j)} \\ 
 P(1) \ar[d]_e   & & P(\C) \ar[d]^{a} \\
    \T \ar[rr]_{j} && \C 
  }
\]
where $a$ comes from the pseudoalgebra structure  and $m_1$ comes the multiplication in the 2-monad. The functor $P(e)$ is bijective on objects because $P$ is, and $j\maps \T \to \C$ is fully faithful.  Thus, by Part 2 of Lemma \ref{lem:power}, there exists a unique functor $w\maps P(\T) \to \T$ and natural isomorphism $\beta \maps a P(j) \to jw$ such that $w P(e) =e m_1$ and $\beta P(e) = \alpha$.   Thus, $w$ makes $\T$ into a strict algebra of $P$ and $\beta$ makes $j$ into a pseudomorphism from $\T$ to $\C$.  This proves the claim: $\T$ has been given the structure of a symmetric monoidal category for which $j \maps \T \to \C$ is a symmmetric monoidal functor.
\end{proof}

The second question is when a symmetric monoidal functor $f \maps \T \to \C$
between props is isomorphic, in $\Symm\Mon\Cat$, to a morphism of props.   In other words: when is there a morphism of props $g \maps \T \to \C$ and a monoidal natural isomorphism $f \cong g$?  This is answered by Proposition \ref{prop:strictification_2}:

\vskip 1em
\noindent \textbf{Proposition 4.4.} \textit{Suppose $\T$ and $\C$ are props and $f \maps \T \to \C$ is a symmetric monoidal functor. Then $f$ is isomorphic to a strict symmetric monoidal functor $g \maps \T \to \C$.   If $f(1) = 1$, then $g$ is a morphism of props.}

\begin{proof}
As in the previous proof, let $P$ be the 2-monad on $\Cat$ whose strict algebras
are strict monoidal categories.    The objects of $P(1)$ correspond to natural
numbers, with tensor product being given addition, so we can write the $n$th object
as $n$.  There is a unique strict monoidal functor $e \maps P(1) \to \T$ with $e(n) = n$ for all $n$.   By a result of Blackwell, Kelly and Power \cite[Cor.\ 5.6]{BKP}, any free algebra of a 2-monad is `flexible', meaning that pseudomorphisms out of this algebra are isomorphic to strict morphisms.   Thus, the symmetric monoidal functor $f e \maps P(1) \to \C$ is isomorphic, in $\Symm\Mon\Cat$, to a strict symmetric monoidal functor $h \maps P(1) \to \C$.      Let $\alpha \maps fe \Rightarrow h$ be the isomorphism.

We can define a strict symmetric monoidal functor $g \maps \T \to \C$ as follows.
On objects, define $g(n) = h(n)$.  For any morphism $\phi \maps m \to n$, 
there is a unique morphism $g(\phi)$ making this square commute:
\[    
 \xymatrix{
     f(m)  \ar[r]^{f(\phi)} \ar[d]_{\alpha_m} & f(n) \ar[d]^{\alpha_n} \\ 
      g(m) \ar[r]_{g(\phi)} & g(n)  
  }
\]
One can check that $g$ is a strict symmetric monoidal functor.  The above square
gives a natural isomorphism between $f$ and $g$, which by abuse of language
we could call $\alpha \maps f \Rightarrow g$.   It is easy to check that this is
a monoidal natural isomorphism.
\end{proof}

It is worth noting that Propositions \ref{prop:strictification_1} and \ref{prop:strictification_2}, and the proofs just given, generalize straightforwardly from props to `$C$-colored' props, with 
$\N$ replaced everywhere by the free commutative monoid on the set of colors, $C$. 

\section{The adjunction between props and signatures}
\label{sec:monadic}

Our goal in this section is to prove Proposition \ref{prop:monadic}:

\vskip 1em 
\noindent \textbf{Proposition 5.2.} \textit{There is a forgetful functor
\[           U \maps \PROP \to \Set^{\N \times \N}  \]
sending any prop to its underlying signature and any morphism of props to its
underlying morphism of signatures.  This functor is monadic.}

\begin{proof}
The plan of the proof is as follows.  We show that props are models of a typed Lawvere theory 
$\Theta_\PROP$ whose set of types is $\N \times \N$.  We write this as
follows:
\[              \PROP \simeq \Mod(\Theta_\PROP)  .\]
This lets us apply the following theorem, which says that for any typed
Lawvere theory $\Theta$ with $T$ as its set of types, the forgetful functor
\[       U \maps \Mod(\Theta) \to \Set^T \]
is monadic.    The desired result follows.

\begin{theorem} 
\label{thm:monadic}
 If $\Theta$ is a typed Lawvere theory with $T$ as its set of types,
then the forgetful functor $U\maps \Mod(\Theta) \to \Set^T$ is monadic.
\end{theorem}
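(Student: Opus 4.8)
The plan is to verify the hypotheses of Beck's monadicity theorem \cite{BW} for $U$: that $U$ has a left adjoint, that $U$ reflects isomorphisms, and that $\Mod(\Theta)$ has and $U$ preserves (equivalently, $U$ creates) coequalizers of $U$-split parallel pairs. Throughout, write $[t] \in \Theta$ for the generating object attached to each type $t \in T$, so that every object of $\Theta$ is isomorphic to a finite product of generating objects, and for a model $M \maps \Theta \to \Set$ one has $(UM)_t = M([t])$.

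First I would produce the left adjoint $F \maps \Set^T \to \Mod(\Theta)$. The key point is that each representable functor $\Theta([t],-) \maps \Theta \to \Set$, being representable, is continuous and hence preserves finite products, so it is an object of $\Mod(\Theta)$; and since $\Mod(\Theta)$ is a full subcategory of $[\Theta,\Set]$, the Yoneda lemma gives $\Mod(\Theta)(\Theta([t],-),M) \cong M([t]) = (UM)_t$, naturally in $M$. Now I invoke Trimble's result \cite[Prop.\ 3.1]{Tr} with $C = \Set$ — which is cocomplete, has finite products, and has finite products distributing over colimits — to conclude that $\Mod(\Theta)$ is cocomplete; in particular it admits set-indexed coproducts and copowers. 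Setting
\[ F(X) \;=\; \coprod_{t \in T} X_t \cdot \Theta([t],-), \]
the coproduct in $\Mod(\Theta)$ of the copowers, a short computation
\[ \Mod(\Theta)(F(X),M) \;\cong\; \prod_{t} \Set\big(X_t,\, \Mod(\Theta)(\Theta([t],-),M)\big) \;\cong\; \prod_t \Set\big(X_t,(UM)_t\big) \;=\; \Set^T(X,UM), \]
natural in $M$, exhibits $F \dashv U$.

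Next, $U$ reflects isomorphisms: if a morphism of models $\alpha \maps M \to N$ has every component $\alpha_{[t]}$ a bijection, then since every object $a \in \Theta$ is isomorphic to some $[t_1] \times \cdots \times [t_n]$ and $M,N$ preserve finite products, $\alpha_a$ is conjugate to a product of the $\alpha_{[t_i]}$, hence a bijection, so $\alpha$ is invertible. The substantive point is that $U$ creates coequalizers of $U$-split pairs. Given $f,g \maps M \rightrightarrows N$ in $\Mod(\Theta)$ such that $Uf, Ug$ admit a split coequalizer in $\Set^T$: because $\Set^T$ is a product of copies of $\Set$, this is exactly a split coequalizer $M([t]) \rightrightarrows N([t]) \to Q_t$ in $\Set$ for each $t$. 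For an arbitrary object $a \cong [t_1] \times \cdots \times [t_n]$, the pair $M(a) \rightrightarrows N(a)$ is isomorphic to the product of the pairs $M([t_i]) \rightrightarrows N([t_i])$ (as $M,N$ preserve products), and a finite product of split coequalizer diagrams is again a split coequalizer (take the product of the splitting data); hence $M(a) \rightrightarrows N(a)$ has a split, hence absolute, coequalizer $q_a \maps N(a) \to Q(a)$. Absoluteness then forces, for each morphism $h \maps a \to a'$ of $\Theta$, a unique $Q(h) \maps Q(a) \to Q(a')$ with $Q(h)q_a = q_{a'}N(h)$; uniqueness makes $Q$ a functor and $q \maps N \to Q$ a natural transformation, and one checks that $Q$ preserves finite products and is the coequalizer of $f,g$ in $\Mod(\Theta)$, preserved by $U$. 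This is the standard creation-of-coequalizers diagram chase, powered by the absoluteness of split coequalizers. Beck's theorem now yields that $U$ is monadic.

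The one place where genuine content enters, rather than formal manipulation, is the cocompleteness of $\Mod(\Theta)$ used to build $F$; that is precisely what Trimble's Proposition \cite[Prop.\ 3.1]{Tr} provides, and it is the reason that result appears in the paper. The remainder is a careful but routine application of Beck's theorem, the Yoneda lemma, and the behaviour of split coequalizers under finite products.
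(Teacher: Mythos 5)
Your proof is correct, but it takes a genuinely different route from the paper: the paper does not prove Theorem \ref{thm:monadic} at all, it simply cites it (Lawvere's thesis for $T=1$, Theorem A.41 of Ad\'amek--Rosick\'y--Vitale in general, with Trimble and Nishizawa--Power for further generalizations), whereas you actually run the standard crude-monadicity argument. All three hypotheses of Beck's theorem are verified soundly: the representables $\Theta([t],-)$ preserve limits and hence are models, Yoneda identifies $\Mod(\Theta)\big(\Theta([t],-),M\big)$ with $(UM)_t$ so that coproducts of copowers of representables furnish the left adjoint, and both reflection of isomorphisms and creation of coequalizers of $U$-split pairs reduce, via product-preservation of models, to componentwise facts in $\Set$ --- using that a finite product of split coequalizer diagrams is again split and that split coequalizers are absolute, which is exactly the right mechanism. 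What your proof buys is a self-contained argument in place of a citation; what the paper's citation buys is brevity and a pointer to sharper generalizations (arbitrary enriching bases in place of $\Set$).

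The one soft spot is your appeal to Trimble's Proposition 3.1 for the cocompleteness of $\Mod(\Theta)$ in order to build $F$. If that cocompleteness result is itself obtained from the monadicity of $U$ (e.g.\ via cocompleteness of categories of algebras through reflexive coequalizers), your argument would be circular. This is easily repaired, because you need far less than cocompleteness: only the coproduct $\coprod_{t}X_t\cdot\Theta([t],-)$ of representables. Finite coproducts of representables in $\Mod(\Theta)$ are again representable, since
\[
\Mod(\Theta)\big(\Theta([t_1]\times\cdots\times[t_n],-),M\big)\;\cong\;\textstyle\prod_i M([t_i])\;\cong\;\prod_i\Mod(\Theta)\big(\Theta([t_i],-),M\big),
\]
and the general coproduct is the filtered colimit of these finite sub-coproducts, computed pointwise; as filtered colimits commute with finite products in $\Set$, the result is again a model, and it has the required universal property. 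With that substitution your proof is complete and independent of any prior cocompleteness result.
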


\begin{proof}
The origin of this theorem may be lost in the mists of time, though the case $T = 1$
is famous, and was proved in Lawvere's thesis \cite{Law}. The general theorem is 
Theorem A.41 in Ad\'amek, Rosick\'y and Vitale's book on algebraic theories \cite{ARV}.
Trimble has proved a further generalization where $\Set$ is replaced by any category $\C$ that 
is cocomplete and has the property that finite products distribute over colimits \cite{Tr}.
An even more general result appears in the work of Nishisawa and Power \cite{NP}.
\end{proof}

Now let us define the terms here and see how this result applies to our situation.
First, for any set $T$, let $\N[T]$ be the set of finite linear combinations of elements
of $T$ with natural number coefficients.  This becomes a commutative monoid under
addition, in fact the free commutative monoid on $T$.  Define a \define{$T$-typed Lawvere theory} to be a category $\Theta$ with finite products whose set of objects is $\mathbb{N}[T]$, with the product of objects given by addition in $\N[T]$.   We call the elements of $T$ \define{types}.  

Suppose $\Theta$ is an $T$-typed Lawvere theory.  
Let \define{$\Mod(\Theta)$} be the category whose objects are functors
$F \maps \Theta \to \Set$ preserving finite products, and whose morphisms are
natural transformations between such functors.  We call an object of $\Mod(\Theta)$ a \define{model} of $\Theta$, and call a morphism in $\Mod(\Theta)$ a 
\define{morphism of models}.

There is an inclusion $T \hookrightarrow \N[T]$, since $\N[T]$ is the free commutative
monoid on $T$.  Thus, any model $M$ of $\Theta$ gives, for each type
$t \in T$, a set $M(t)$.   Similarly, any morphism of models 
$\alpha \maps M \to M'$ gives, for each type $t \in T$, a function
$\alpha_t \maps M(t) \to M'(t)$.  Indeed, there is a functor
\[         U \maps \Mod(\Theta) \to \Set^T   \]
with $U(M)(t) = M(t)$ for each model $M$ and $U(\alpha)(t) = \alpha_t$ for each
morphism of models $\alpha \maps M \to M'$.   If we call $\Set^T$ the category of 
\define{signatures} for $T$-typed Lawvere theories, then $U$ sends models to their
underlying signatures and morphisms of models to morphisms of their underlying
signatures.  Theorem \ref{thm:monadic} says that $U$ is monadic.

To complete the proof of Proposition \ref{prop:monadic} we need to give 
a typed Lawvere theory $\Theta_{\PROP}$ whose models are props.  We can do this by giving a 
`sketch'.  Since this idea has been described very carefully by Barr and Wells \cite{BW} and others, we content ourselves with a quick intuitive explanation of the special case we really need which could be called a `products sketch'.   This is a way of presenting a $T$-typed Lawvere
theory by specifying a set $T$ of generating objects (or types), a set of generating morphisms between formal products of these generating objects, and a set of relations given as commutative diagrams.  These commutative diagrams can involve the generating morphisms and also morphisms built from these using the machinery available in a category with finite products.  

To present the typed Lawvere theory $\Theta_{\PROP}$ we start by taking $T = \N \times \N$.  For the purposes of easy comprehension, we call the corresponding generating objects $\hom(m,n)$ for $m,n \in \N$, since these will be mapped by any model of $\Theta_{\PROP}$ to 
the homsets that a prop must have.   We then include the following generating morphisms:

\begin{itemize}
\item
For any $m$ we include a morphism $\iota_m \maps 1 \rightarrow \hom(m,m)$. These give rise to the identity morphisms in any model of $\Theta_{\PROP}$.
\item
For any $\ell,m,n$, we include a morphism $\circ_{\ell,m,n} \maps \hom(m,n) \times \hom(\ell,m) \rightarrow \hom(\ell,n)$. These give us the ability to compose morphisms in any model of $\Theta_{\PROP}$.
\item 
For any $m,n,m',n'$, we include a morphism $\otimes_{m,n,m',n'} \maps \hom(m',n') \times \hom(m,n) \rightarrow \hom(m+m',n+n')$. These allow us to take the tensor product of morphisms in any model of $\Theta_{\PROP}$.
\item 
For any $m,m'$, we include a morphism $b_{m,m'} \maps 1 \rightarrow \hom(m+m',m'+m)$. These give the braidings in any model of $\Theta_{\PROP}$.
\end{itemize}

Finally, we impose relations via the following commutative diagrams.  In these diagrams, unlabeled arrows are morphisms provided by the structure of a category with 
finite products.  We omit the subscripts on morphisms since they can be inferred 
from context.  We begin with a set of diagrams, one for each $m,n \in \N$, that ensure associativity of composition in any model of $\Theta_{\PROP}$:

\begin{center}
\begin{tikzpicture}[->,>=stealth',node distance=2.2cm, auto]
 \node (A) {$\hom(m,n)\times \hom(l,m)\times \hom(k,l)$};
 \node (B) [below of=A,left of=A] {$\hom(l,n)\times \hom(k,l)$};
 \node (C) [below of=A,right of=A] {$\hom(m,n)\times \hom(k,m)$};
 \node (D) [right of=B,below of=B] {$\hom(k,n)$};

 \draw[->] (A) to node [swap]{$\circ \times 1$}(B);
 \draw[->] (A) to node {$1 \times \circ$}(C);
 \draw[->] (B) to node [swap]{$\circ$}(D);
\draw[->] (C) to node {$\circ$}(D);
\end{tikzpicture}
\end{center}

\noindent Next, a diagram that ensures each $\iota_m$ picks out an identity morphism:

\begin{center}
\begin{tikzpicture}[->,>=stealth',node distance=3.5cm, auto]
 \node (A) {$1 \times \hom(m,n)$};
 \node (B) [right of=A,xshift=1cm] {$\hom(m,n)$};
 \node (C) [right of=B,xshift=1cm] {$\hom(m,n) \times 1$};
 \node (D) [below of=A,yshift=1cm] {$\hom(n,n)\times \hom(m,n)$};
\node (E) [below of=B,yshift=1cm] {$\hom(m,n)$};
\node (F) [below of=C,yshift=1cm] {$\hom(m,n)\times \hom(m,m)$};
 \draw[->] (A) to node {$$}(B);
 \draw[->] (C) to node {$$}(B);
 \draw[->] (A) to node {$\iota \times 1$}(D);
\draw[->] (B) to node {$1$}(E);
 \draw[->] (C) to node {$1\times \iota$}(F);
 \draw[->] (D) to node {$\circ$}(E);
 \draw[->] (F) to node [swap]{$\circ$}(E);
\end{tikzpicture}
\end{center}

\noindent Next, a diagram that ensures associativity of the tensor product of morphisms:

\begin{center}
\begin{tikzpicture}[->,>=stealth',node distance=2.8cm, auto]
 \node (A) {$\hom(m,n)\times \hom(m',n')\times \hom(m'',n'')$};
 \node (B) [below of=A,left of=A] {$\hom(m+m',n+n')\times \hom(m''+n'')$};
 \node (C) [below of=A,right of=A] {$\hom(m,n)\times \hom(m'+m'',n'+n'')$};
 \node (D) [right of=B,below of=B] {$\hom(m+m'+m'',n+n'+n'')$};

 \draw[->] (A) to node [swap]{$\otimes \times 1$}(B);
 \draw[->] (A) to node {$1 \times \otimes$}(C);
 \draw[->] (B) to node [swap]{$\otimes$}(D);
\draw[->] (C) to node {$\otimes$}(D);
\end{tikzpicture}
\end{center}

\noindent Next, a diagram that ensures that tensoring with the identity morphism on $0$ acts trivially on morphisms:

\begin{center}
\begin{tikzpicture}[->,>=stealth',node distance=3.5cm, auto]
 \node (A) {$1 \times \hom(m,n)$};
 \node (B) [right of=A,xshift=1cm] {$$};
 \node (C) [right of=B,xshift=1cm] {$\hom(m,n) \times 1$};
 \node (D) [below of=A,yshift=1cm] {$\hom(0,0)\times \hom(m,n)$};
\node (E) [below of=B,yshift=1cm] {$\hom(m,n)$};
\node (F) [below of=C,yshift=1cm] {$\hom(m,n)\times \hom(0,0)$};
 \draw[->] (A) to node {$$}(E);
 \draw[->] (C) to node {$$}(E);
 \draw[->] (A) to node [swap]{$\iota \times 1$}(D);
 \draw[->] (C) to node {$1\times \iota$}(F);
 \draw[->] (D) to node {$\otimes$}(E);
 \draw[->] (F) to node [swap]{$\otimes$}(E);
\end{tikzpicture}
\end{center}

\noindent Next, a diagram that ensures that the tensor product preserves composition:

\begin{center}
\begin{tikzpicture}[->,>=stealth',node distance=1.2cm, auto]
 \node (A) {$\hom(m',n')\times \hom(l',m')\times \hom(m,n)\times \hom(l,m)$};
 \node (B) [below of=A] {$\hom(l',n')\times \hom(l,n)$};
 \node (C) [right of=A,xshift=7.25cm] {$\hom(m',n')\times \hom(m,n) \times \hom(l',m') \times \hom(l,m)$};
 \node (D) [below of=C] {$\hom(m+m',n+n')\times \hom(l+l',m+m')$};
 \node (E) [below of=B,right of=B,xshift=2.25cm] {$\hom(l+l',n+n')$};

 \draw[->] (A) to node [swap]{$\circ \times \circ$}(B);
 \draw[->] (A) to node {$$}(C);
 \draw[->] (B) to node [swap]{$\otimes$}(E);
\draw[->] (C) to node {$\otimes \times \otimes$}(D);
\draw[->] (D) to node {$\circ$}(E);
\end{tikzpicture}
\end{center}

\noindent Next, a diagram that ensures the naturality of the braiding:

\begin{center}
\begin{tikzpicture}[->,>=stealth',node distance=2.2cm, auto]
 \node (A) {$1 \times \hom(m',n')\times \hom(m,n)$};
 \node (B) [right of=A,xshift=2.5cm] {$\hom(m',n')\times 1 \times \hom(m,n)$};
 \node (C) [right of=B,xshift=2.5cm] {$\hom(m',n') \times \hom(m,n) \times 1$};
 \node (D) [below of=A,yshift=1cm] {$\hom(n+n',n'+n)\times \hom(m+m',n+n')$};
\node (E) [below of=C,yshift=1cm] {$\hom(m+m',n+n')\times \hom(m+m',m'+m)$};
\node (F) [below of=B,yshift=-2cm] {$\hom(m+m',n'+n)$};

 \draw[->] (B) to node [swap]{$$}(A);
 \draw[->] (B) to node {$$}(C);
 \draw[->] (A) to node [swap]{$b\times \otimes$}(D);
\draw[->] (D) to node [swap]{$\circ$}(F);
 \draw[->] (C) to node {$\otimes \times b$}(E);
 \draw[->] (E) to node {$\circ$}(F);
\end{tikzpicture}
\end{center}

Next, a diagram that ensures that the braiding is a symmetry:

\begin{center}
\begin{tikzpicture}[->,>=stealth',node distance=3cm, auto]
 \node (A) {$1 \times 1$};
 \node (B) [below of=A,left of=A] {$\hom(m'+m,m+m')\times \hom(m+m',m'+m)$};
 \node (C) [below of=A,right of=A] {$1$};
 \node (D) [right of=B,below of=B] {$\hom(m+m',m+m')$};

 \draw[->] (A) to node [swap]{$b_{m',m}\times b_{m,m'}$}(B);
 \draw[->] (A) to node {$$}(C);
 \draw[->] (B) to node [swap]{$\circ$}(D);
\draw[->] (C) to node {$\iota_{m+m'}$}(D);
\end{tikzpicture}
\end{center}

\noindent
Next, a diagram that ensures that the braidings $b_{0,n}$ are identity morphisms:

\begin{center}
\begin{tikzpicture}[->,>=stealth',node distance=2cm, auto]
 \node (B) {$1$};
 \node (C) [right of=B,xshift=2.5cm] {$\hom(n,n)$};

 \draw[->]  [in=120, out=60, looseness=1.0](B) to node {$\iota_n$}(C);
 \draw[->] [in=-120, out=-60, looseness=1.0] (B) to node [swap]{$b_{0,n}$}(C);
\end{tikzpicture}
\end{center}

\noindent
Finally, we need a diagram to ensure the braiding obeys the hexagon identities. However, since the associators are trivial and the braiding is a symmetry, the two hexagons reduce to a 
single triangle.   To provide for this, we use the following diagram:

\begin{center}
\begin{tikzpicture}[->,>=stealth',node distance=2.5cm, auto]
 \node (A) {$1$};
 \node (B) [right of=A,xshift=7.5cm] {$1 \times 1 \times 1 \times 1$};
 \node (C) [below of=B] {$\hom(m+m'',m''+m)\times \hom(m',m')\times \hom(m'',m'') \times \hom(m+m',m'+m)$};
 \node (D) [below of=C] {$\hom(m'+m+m'',m'+m''+m)\times \hom(m+m'+m'',m'+m+m'')$};
 \node (E) [below of=D,left of=D,xshift=-1.75cm] {$\hom(m+m'+m'',m'+m''+m)$};

 \draw[->] (B) to node {$$}(A);
 \draw[->] (B) to node {$b\times \iota \times \iota \times b$}(C);
 \draw[->] (C) to node {$\otimes \times \otimes$}(D);
\draw[->] (D) to node {$\circ$}(E);
\draw[->] (A) to node  [swap]{$b$}(E);

\end{tikzpicture}
\end{center}

This completes the list of commutative diagrams in the sketch for $\Theta_{\PROP}$.  These diagrams simply state the definition of a PROP, so there is a 1-1 correspondence between models of $\Theta_{\PROP}$ in $\Set$ and props.  Similarly, morphisms of models of $\Theta_{\PROP}$ in $\Set$ correspond to morphisms of props.  This gives an isomorphism of
categories $\PROP \cong \Mod(\Theta_{\PROP})$ as desired. This concludes the proof.
\end{proof}

It is worth noting that Proposition \ref{prop:monadic}, and the proof just given, generalize straightforwardly from props to `$C$-colored' props, with $\N$ replaced everywhere by the free commutative monoid on a set $C$, called the set of `colors'.    There is also a version for operads, a version for $C$-colored operads, and a version for $T$-typed Lawvere theories: there is a typed Lawvere theory whose models in $\Set$ are $T$-typed Lawvere theories!  In each case we simply need to write down a sketch that describes the structure under consideration.

Proposition \ref{prop:monadic} has a wealth of consequences; we conclude with two that
Erbele needed in his work on control theory \cite[Prop.\ 6]{E}.  In rough terms, these results 
say that adding generators to a presentation of a prop $P$ gives a new prop $P'$ having
$P$ as a sub-prop, while adding equations gives a new prop $P'$ that is a quotient of $P$.  
We actually prove more general statements.

In all that follows, let $\Theta$ be a $T$-typed Lawvere theory.    Let $U \maps \Mod(\Theta) \to \Set^T$ be the forgetful functor and $F \maps \Set^T \to \Mod(\Theta)$ its left adjoint.  Further, suppose we have two coequalizer diagrams in $\Mod(\Theta)$:
\[
    \xymatrix{
      F(E) \ar@<-.5ex>[r]_{\rho} \ar@<.5ex>[r]^{\lambda} & F(\Sigma) \ar[r]^-{\pi} & P   
      }
\] 
\[
	\xymatrix{     
       F(E') \ar@<-.5ex>[r]_{\rho'} \ar@<.5ex>[r]^{\lambda'} & F(\Sigma') \ar[r]^-{\pi'} & P'
    }
\]
together with morphisms $f \maps E \to E'$, $g \maps \Sigma \to \Sigma'$ such that these squares commute:
\[ 
\xymatrix{
	F(E) \ar[r]^{\lambda} \ar[d]_{F(f)} & F(\Sigma) \ar[d]^{F(g)} \\
	F(E') \ar[r]^{\lambda'}                        & F(\Sigma') 
	}
\qquad \qquad
	\xymatrix{
	F(E) \ar[r]^{\rho} \ar[d]_{F(f)} & F(\Sigma) \ar[d]^{F(g)} \\
	F(E') \ar[r]^{\rho'}                        & F(\Sigma') 
	}
\] 
Then, thanks to the universal property of $P$, there exists a unique morphism $h \maps P \to P'$ making the square at right commute:
\[ 
\xymatrix{
F(E) \ar@<-.5ex>[r]_{\rho} \ar@<.5ex>[r]^{\lambda} \ar[d]_{F(f)} 
& F(\Sigma) \ar[r]^-{\pi} \ar[d]_{F(g)}             & P \ar[d]^{h} \\
F(E') \ar@<-.5ex>[r]_{\rho'} \ar@<.5ex>[r]^{\lambda'} 
&	F(\Sigma') \ar[r]^-{\pi'}                        & P' 
	}
\]
In this situation, adding extra equations makes $P'$ into a quotient object of $P$.  More precisely, and also more generally:

\begin{corollary}
If $g$ is an epimorphism, then $h$ is a regular epimorphism.  
\end{corollary}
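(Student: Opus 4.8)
The plan is to strip the statement down to a fact about underlying $\Set^T$-objects, exploiting that $\Mod(\Theta)$ — being (isomorphic to) a category of models of a typed Lawvere theory — is an exact category whose regular epimorphisms are exactly the homomorphisms that are surjective at each type. Concretely, I would use three standard ingredients. \textbf{(i)} By Proposition \ref{prop:monadic} (resp.\ the general Theorem \ref{thm:monadic}), $F \maps \Set^T \to \Mod(\Theta)$ is a left adjoint, hence preserves all colimits, in particular coequalizers; consequently $F$ sends regular epimorphisms to regular epimorphisms, since a regular epi is by definition a coequalizer of some parallel pair. \textbf{(ii)} In $\Set^T$ every epimorphism is a split epimorphism (by choice, componentwise), hence a regular epimorphism — epis, regular epis and split epis coincide in $\Set$, and everything in $\Set^T$ is computed componentwise. \textbf{(iii)} In $\Mod(\Theta)$ the regular epimorphisms are precisely those $m$ for which $U(m)$ is surjective at each type; I would cite the standard theory of algebraic categories for this (e.g.\ Ad\'amek--Rosick\'y--Vitale \cite{ARV}), noting the gist: $\Mod(\Theta)$ is Barr-exact, a coequalizer in it is obtained by quotienting the underlying $T$-indexed family of sets so every regular epi is componentwise surjective, and conversely a componentwise surjective homomorphism factors through the quotient by its kernel pair via a componentwise bijective homomorphism, which (as $U$ is monadic) is an isomorphism, exhibiting $m$ as the coequalizer of its kernel pair.

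Granting these, the argument is short. By (ii) and (i), $F(g)$ is a regular epimorphism in $\Mod(\Theta)$, so by (iii) $U(F(g))$ is surjective at each type; likewise $\pi'$, being a coequalizer, is a regular epimorphism, so $U(\pi')$ is surjective at each type. Applying $U$ to the identity $h \circ \pi = \pi' \circ F(g)$ that defines $h$ yields $U(h)\circ U(\pi) = U(\pi')\circ U(F(g))$, a composite of componentwise surjections and hence componentwise surjective; therefore $U(h)$ is itself componentwise surjective, since whenever a composite $\beta\alpha$ of functions is surjective so is $\beta$. By (iii) again, $h$ is a regular epimorphism. Note that $f$ and the explicit coequalizer presentations are not used beyond guaranteeing that the comparison morphism $h$ with $h\circ\pi = \pi'\circ F(g)$ exists.

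The main obstacle is ingredient (iii): the identification of the regular epimorphisms of $\Mod(\Theta)$ with the componentwise surjective homomorphisms. This is where the genuine content lives, and it rests on the exactness of $\Mod(\Theta)$ — a structural feature of categories of models of typed Lawvere theories — rather than on anything directly visible in the two coequalizer diagrams. An alternative packaging that sidesteps the ``surjective $\Rightarrow$ regular epi'' half of (iii) is to argue with strong epimorphisms instead: $F(g)$ is a split, hence strong, epimorphism, $\pi'$ is strong, strong epimorphisms are closed under composition, and if $h\circ\pi$ is strong with $\pi$ epic then $h$ is strong; but this still invokes the coincidence of strong and regular epimorphisms in $\Mod(\Theta)$, so the exactness of $\Mod(\Theta)$ is used either way. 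I would present the componentwise-surjection version, as it is the cleanest to write out.
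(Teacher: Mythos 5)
Your proof is correct and follows essentially the same route as the paper's: epimorphisms in $\Set^T$ are split hence regular, $F$ preserves regular epimorphisms as a left adjoint, $\pi'$ is a regular epimorphism by definition, and one then cancels $\pi$ from the composite $h\circ\pi = \pi'\circ F(g)$. The only difference is that the paper asserts the final composition-and-cancellation step directly as a standard fact, whereas you justify it by identifying regular epimorphisms in $\Mod(\Theta)$ with componentwise surjective homomorphisms --- a legitimate and somewhat more self-contained way of filling in the same step.
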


\begin{proof} 
Given that $g$ is an epimorphism in $\Set^T$, it is a regular epimorphism.
So is $F(g)$, since left adjoints preserve regular epimorphisms, and so is $\pi'$, by definition.
It follows that $\pi' \circ F(g) = h \circ \pi$ is a regular epimorphism, and thus so is $h$.
\end{proof}
	
One might hope that in the same situation, adding extra generators makes $P$ into a subobject of $P'$.    More precisely, one might hope that if $f$ is an isomorphism and $g$ is a monomorphism, $h$ is a monomorphism.   This is not true in general, but it is when the typed Lawvere theory $\Theta$ is $\Theta_{\PROP}$.

To see why some extra conditions are needed, consider a counterexample provided by Charles Rezk \cite{Rezk}.  There is a typed Lawvere theory with two types whose models consist of:
\begin{itemize}
\item a ring $R$,
\item a set $S$,
\item a function $f \maps S \to R$ with $f(s) = 0$ and $f(s) = 1$ for 
all $s \in S$.
\end{itemize}
Thanks to the peculiar laws imposed on $f$, the only models are pairs $(R,S)$ where $R$ is an arbitrary ring and $S$ is empty, and pairs $(R,S)$ where $R$ is a terminal ring (one with $0 = 1$) and $S$ is an arbitrary set.   The free model on $(\emptyset,\emptyset) \in \Set^2$ is $(\Z,\emptyset)$, while the free model on $(\emptyset, 1)$ is $(\{0\},1)$.   Thus, the monomorphism $(\emptyset,\emptyset) \to (\emptyset,1)$ in $\Set^2$ does not induce a monomorphism between the corresponding free models: the extra generator in the set part of $(R,S)$ causes the ring part to `collapse'.

This problem does not occur for Lawvere theories with just one type, nor does it happen for typed Lawvere theories that arise from typed operads, more commonly known as `colored' operads \cite{BM,Yau}. A typed Lawvere theory arises from a typed operad when it can be presented in terms of operations obeying purely equational laws for which each variable appearing in an equation shows up exactly once on each side.  The laws governing props are of this form: for example, the operations for composition of morphisms obey the associative law
\[            (f \circ g) \circ h = f \circ (g \circ h)  .\]
It follows that $\Theta_{\PROP}$ arises from a typed operad, so the following corollary 
applies to this example:

\begin{corollary}
Suppose that either $\Theta$ is a $T$-typed Lawvere theory with $T = 1$ or $\Theta$ arises from a $T$-typed operad.  If $f$ is an isomorphism and $g$ is a monomorphism, then $h$ is a monomorphism.
\end{corollary}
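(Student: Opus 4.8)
The plan is to reduce the statement to a concrete fact about free algebras and then use the combinatorial structure supplied by the two hypotheses. First I would exploit that $f$ is an isomorphism: replacing $E'$ by $E$ along $f$ does not change the coequalizer $P'$ (precomposing $\lambda',\rho'$ with the isomorphism $F(f)$ leaves $\mathrm{coeq}(\lambda',\rho')$ unchanged), nor does it change $h$, so we may assume $E = E'$, $f = 1_E$, $\lambda' = F(g)\circ\lambda$ and $\rho' = F(g)\circ\rho$. Next, since $g$ is a monomorphism in $\Set^T$ it is a componentwise injection, hence complemented: there is $D \in \Set^T$ with $\Sigma' \cong \Sigma + D$ and $g$ the coprojection. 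Because $F$ is a left adjoint, $F(\Sigma') \cong F(\Sigma) + F(D)$ and $F(g)$ is the coprojection $F(\Sigma) \to F(\Sigma) + F(D)$. A short computation with universal properties (using that $\lambda',\rho'$ factor through this coprojection) then identifies $P' = \mathrm{coeq}(\lambda',\rho')$ with the coproduct $P + F(D)$ in $\Mod(\Theta)$, in such a way that $h$ becomes the coprojection $P \to P + F(D)$. So it suffices to show this coprojection is monic. This genuinely uses a hypothesis: for a general typed Lawvere theory the coprojection $P \to P + F(D)$ can fail to be monic, as Rezk's example above shows.

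For the case $T = 1$: if $UP$ is empty then $P$ has empty underlying set and is therefore the initial object of $\Mod(\Theta)$, whence $P \to P + F(D)$ is trivially monic. If $UP$ is nonempty, pick an element $x$ and let $F(D) \to P$ be the morphism corresponding under the adjunction to the constant map $D \to UP$ at $x$; together with $1_P$ this exhibits a retraction $P + F(D) \to P$ of the coprojection, which is therefore a split monomorphism.

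For the case where $\Theta$ arises from a $T$-typed operad $O$: here the underlying set of a free algebra $F(Z)$ has the explicit ``substitution product'' description, whose elements are equivalence classes of pairs (an operation of $O$, a tuple of elements of $Z$ labelling its inputs) modulo the symmetric group actions, so each $u \in F(Z)$ has a well-defined finite support $\mathrm{supp}(u) \subseteq \coprod_c Z_c$, and $F$ of a map acts on labels. Consequently $F(g) \maps F(\Sigma) \to F(\Sigma')$ is injective with image the subalgebra of terms supported in $\Sigma$; identify $F(\Sigma)$ with this subalgebra. Write $R$ for the set of pairs $(\lambda(z),\rho(z))$ generating the congruence on $UF(\Sigma)$ whose quotient is $P$; then the congruence on $UF(\Sigma')$ whose quotient is $P'$ is generated by this same set $R$, now viewed inside $UF(\Sigma')$. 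To prove $h$ injective I would take a one-step rewriting chain $s = u_0, u_1, \dots, u_n = t$ witnessing a congruence relation in $F(\Sigma')$ between two $\Sigma$-supported terms $s,t$, each step replacing a subterm $p[a]$ by $p[b]$ for some $(a,b) \in R$ or $(b,a)\in R$ and some one-hole context $p$ over $\Sigma'$, and argue by induction on $i$ that every $u_i$ is $\Sigma$-supported and every context $p$ lies over $\Sigma$. The key point, and exactly where operadicity enters, is that a one-hole context over an operadic theory is \emph{linear} in its hole, so $\mathrm{supp}(p[a]) = (\mathrm{supp}(p)\setminus\{\ast\}) \cup \mathrm{supp}(a)$; since $a,b \in F(\Sigma)$ and $u_i = p[a]$ is $\Sigma$-supported, this forces $\mathrm{supp}(p) \subseteq \Sigma \cup \{\ast\}$, hence $u_{i+1} = p[b]$ is again $\Sigma$-supported, and the purified chain is a legitimate derivation inside $F(\Sigma)$. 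Therefore $[s] = [t]$ already in $P$, so $h$ is monic.

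I expect the main obstacle to be this last step: carefully setting up the one-step rewriting description of congruences generated by a set of pairs in $\Mod(\Theta)$, justifying that contexts over an operadic theory are linear in their holes, and checking that the ``purified'' chain is a valid derivation over $\Sigma$. The remaining ingredients — the reduction to a coprojection $P \to P + F(D)$, the complemented-subobject fact in $\Set^T$, and the split-monomorphism argument for $T=1$ — are routine.
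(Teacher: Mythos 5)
Your proposal is correct. Its first half---using the isomorphism $f$ to identify $E$ with $E'$, writing $\Sigma' \cong \Sigma + D$ so that $h$ becomes the coprojection $j \maps P \to P + F(D)$, and splitting that coprojection when $T=1$ (with the empty case handled vacuously)---is essentially the paper's own argument. Where you genuinely diverge is the operadic case. The paper, following Trimble, builds from $P$ a model $P^\ast$ by adjoining one absorbing element $x_t$ per type, with each operation sending any tuple not wholly in $P$ to the new element; the resulting monomorphism $k \maps P \to P^\ast$ factors as $\ell \circ j$, where $\ell$ collapses $F(D)$ onto the basepoints, so $j$ is monic. You instead argue syntactically: support of a term in a free operadic algebra is well defined (since $F(Z) \cong \coprod_n O(n)\times_{S_n} Z^n$ has no identifications beyond the symmetric-group action), the congruence generated by $R$ admits a Mal'cev-style description by chains of one-hole-context replacements, and linearity of operadic contexts forces every term in a chain joining two $\Sigma$-supported terms to be $\Sigma$-supported, so the identification already happens in $F(\Sigma)$. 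Both routes work. The paper's is shorter and never needs to open up how $P'$ is presented as a quotient; yours costs more bookkeeping (the multi-sorted Mal'cev lemma, well-definedness of support on $S_n$-orbits) but makes explicit where linearity enters and yields the sharper statement that the congruence on $F(\Sigma')$ generated by $R$ restricts to the one on $F(\Sigma)$. One small caution: in the $T=1$ empty case, ``$P$ is initial'' does not by itself give monicity in an abstract category; the honest justification is that $U$ is faithful, hence reflects monomorphisms, and the underlying function $\emptyset \to U(P+F(D))$ is injective.
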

  	
\begin{proof}
We may assume without loss of generality that $f \maps E \to E'$ is the identity and $g \maps \Sigma \to \Sigma'$ is monic.  Since monomorphisms in $\Set^T$ are just $T$-tuples of injections, we can write $\Sigma' \cong \Sigma + \Delta$ for some signature $\Delta$ in such a way that 
$g \maps \Sigma \to \Delta$ is isomorphic to the coprojection $\Sigma \to \Sigma + \Delta$.   It follows that $F(g)$ is isomorphic to the coprojection $i \maps F(\Sigma) \to F(\Sigma) + F(\Delta)$, and the diagram
\[ 
\xymatrix{
F(E) \ar@<-.5ex>[r]_{\rho} \ar@<.5ex>[r]^{\lambda} \ar[d]_{F(f)} 
& F(\Sigma) \ar[r]^-{\pi} \ar[d]_{F(g)}             & P \ar[d]^{h} \\
F(E') \ar@<-.5ex>[r]_{\rho'} \ar@<.5ex>[r]^{\lambda'} 
&	F(\Sigma') \ar[r]^-{\pi'}                        & P' 
	}
\]
is isomorphic to this diagram:
\[ 
\xymatrix{
F(E) \ar@<-.5ex>[r]_{\rho} \ar@<.5ex>[r]^{\lambda} \ar[d]_{1} 
& F(\Sigma) \ar[r]^-{\pi} \ar[d]_{i}             & P \ar[d]^{j} \\
F(E') \ar@<-.5ex>[r]_-{\rho} \ar@<.5ex>[r]^-{\lambda} 
&	F(\Sigma) + F(\Delta) \ar[r]^-{\pi + 1}                        & P + F(\Delta)
	}
\]
where $j$ is the coprojection from $P$ to $P + F(\Delta)$.  Thus it suffices to prove the following:

\begin{lemma}
Suppose that either $\Theta$ is a $T$-typed Lawvere theory with $T = 1$ or $\Theta$ arises from a $T$-typed operad.  If $P \in \Mod(\Theta)$ and $\Delta \in \Set^T$ then the coprojection
$j \maps P \to P + F(\Delta)$ is a monomorphism.
\end{lemma}

\noindent \textsl{Proof.}  We thank Todd Trimble for this proof.  First suppose $T = 1$.  To show that $j$ is monic it suffices to show that $U(j)$ is injective, since $U \maps \Mod(\Theta) \to \Set$ is faithful and thus it reflects monomorphisms \cite[Prop.\ 11.8]{ARV}.  Either $U(P)$ is empty and the injectivity is trivial, or $U(P)$ is nonempty, in which case we can split the coprojection $j \maps P \to P + F(\Delta)$, since all we need for this is a morphism $F(\Delta) \to P$, or equivalently, a function $\Delta \to U(P)$.

Next suppose that $\Theta$ is a $T$-typed Lawvere theory that comes from a $T$-typed operad $O$.  Here we can use the following construction: given $P \in \Mod(\Theta)$, we can form a model $P^\ast \in \Mod(\Theta)$ that has an extra element for each type $t \in T$.  To do this, we first set
\[  M^\ast(t) = M(t) \sqcup \{x_t\}   \]
for all $t \in T$, where $x_t$ is an arbitrary extra element.  Then, we make $P^*$ into an algebra of $O$ as follows.  Suppose $f \in O(t_1, \dots, t_n; t)$ is any operation of $O$ with inputs of type
$t_1, \dots, t_n \in T$ and output of type $t \in T$.   Since $P$ is an algebra of $O$, $f$ acts on $P$ as some function 
\[ P(f) \maps P(t_1) \times \cdots \times P(t_n) \to P(t)   .\]
Then we let $f$ act on $P^*$ as the function 
\[ P^*(f) \maps P^\ast(t_1) \times \cdots \times P^\ast(t_n) \to P^\ast(t)   .\]
that equals $P(f)$ on $n$-tuples $(p_1, \dots, p_n)$ with $p_i \in P(t_i)$ for all $i$, and otherwise gives $x_t$.   One can readily check that this really defines an algebra of $O$ and thus a model of $\Theta$.   The evident morphism of models $k \maps P \to P^*$ is monic because again $U$ is faithful \cite[Prop.\ 14.8]{ARV} and the underlying morphism of signatures $U(k) \maps U(P) \to U(P^*)$ is monic.

With this construction in hand, we can show that the coprojection $j \maps P \to P + F(\Delta)$ is monic.  We have just constructed a monomorphism $k \maps P \to P^*$. Now extend this to a morphism $\ell \maps P + F(\Delta) \to P^*$: to do this, we just need a morphism $F(\Delta) \to  P^*$, which we can take to be the one corresponding to the map $X \to U(P^*)$ whose component $X(t) \to U(P^*)(t)$ is the function mapping every element of $X(t)$ to $x_t$.   Then, we have $k = \ell \circ j$, and since $k$ is monic, $j$ must be as well.  
\end{proof}

\end{document}